\documentclass[12pt]{amsart}
\usepackage{amsmath}
\usepackage{amsthm}
\usepackage{amsfonts}
\usepackage{amssymb}
\usepackage{tikz-cd}
\usepackage{graphicx}
\usepackage{hyperref}
\usepackage{url}
\usepackage{float}
\usepackage{comment}
\usepackage{lineno}
\usepackage{comment,MnSymbol,tikz}
\usetikzlibrary{backgrounds,arrows}
\usepackage{xcolor}

\newcommand{\meet}{\land}
\newcommand{\join}{\lor}

\newcommand{\isf}{\mathbf{IS}_f}
\newcommand{\is}{\mathbf{IS}}
\newcommand{\nis}{\mathbf{NIS}}
\newcommand{\nisf}{\mathbf{NIS}_f}
\newcommand{\pf}{\mathbf{P}_f}
\newcommand{\pfk}{\mathbf{P}_f^K}
\newcommand{\spf}{\mathbf{SP}_f}

\newcommand{\haf}{\mathbf{HA}_f}
\newcommand{\colorsof}{c}
\newcommand{\upsets}{{\sf Up}}
\newcommand{\up}{{\uparrow}}

\newcommand{\dn}{{\downarrow}}

\newcommand{\coverof}{\smalltriangledown}
\newcommand{\kmor}{K\"ohler morphism}
\newcommand{\smor}{S-morphism}
\newcommand{\isbot}{\mathbf{IS}^\bot}

\newcommand{\nisbot}{\mathbf{NIS}^\bot}
\newcommand{\nisfbot}{\mathbf{NIS}_f^\bot}

\newcommand{\spfbot}{\mathbf{SP}_f^\bot}

\let\ge\geqslant
\let\le\leqslant

\let\coverof\smalltriangledown

\newtheorem{theorem}{Theorem}[section]
\newtheorem{lemma}[theorem]{Lemma}
\newtheorem{proposition}[theorem]{Proposition}

\theoremstyle{definition}
\newtheorem{definition}[theorem]{Definition}

\newtheorem{example}[theorem]{Example}

\newtheorem{remark}[theorem]{Remark}
\newtheorem{convention}[theorem]{Convention}

\setlength{\topmargin}{-0.5in}
\setlength{\textheight}{9in}
\setlength{\oddsidemargin}{0in}
\setlength{\evensidemargin}{0in}
\setlength{\textwidth}{6.5in}

\title{Diego's Theorem for nuclear implicative semilattices}

\author{G.~Bezhanishvili}
\address{New Mexico State University}
\email{guram@nmsu.edu}

\author{N.~Bezhanishvili}
\address{University of Amsterdam}
\email{N.Bezhanishvili@uva.nl}

\author{L.~Carai}
\address{New Mexico State University}
\email{lcarai@nmsu.edu}

\author{D.~Gabelaia}
\address{Tbilisi State University}
\email{gabelaia@gmail.com}

\author{S.~Ghilardi}
\address{University of Milan}
\email{silvio.ghilardi@unimi.it}

\author{M.~Jibladze}
\address{Tbilisi State University}
\email{mamuka.jibladze@gmail.com}

\subjclass[2010]{06A12; 06A07; 06D20; 03B45; 06D22; 03B55; 03G10}
\keywords{implicative semilattice, nucleus, locally finite variety, duality theory, universal model}

\begin{document}

\begin{abstract}
We prove that the variety of nuclear implicative semilattices is locally finite, thus generalizing Diego's Theorem. The key ingredients of our proof include the coloring technique and construction of universal models from modal logic. For this we develop duality theory for finite nuclear implicative semilattices, generalizing K\"ohler duality. We prove that our main result remains true for bounded nuclear implicative semilattices, give an alternative proof of Diego's Theorem, and provide an explicit description of the free cyclic nuclear implicative semilattice.
\end{abstract}

\maketitle

\tableofcontents

\section{Introduction}

It is a celebrated result of Diego \cite{Die66} that the variety $\is$ of implicative semilattices is locally finite. We prove that, surprisingly enough, Diego's Theorem remains true for the variety $\nis$ of nuclear implicative semilattices. A \emph{nucleus} on an implicative semilattice $A$ is a unary function $j:A\to A$ satisfying
\begin{itemize}
\item[(1)] $a\le ja$,
\item[(2)] $jj a=j a$,
\item[(3)] $j(a\land b)=ja\land jb$.
\end{itemize}
A \emph{nuclear implicative semilattice} is a pair $\mathfrak A=(A,j)$ where $A$ is an implicative semilattice and $j$ is a nucleus on $A$. Nuclei play an important role in different branches of mathematics, logic, and computer science:
\begin{itemize}
\item In topos theory, nuclei on the subobject classifier of a topos are exactly the Lawvere-Tierney operators, and give rise to sheaf subtoposes,
generalizing sheaves with respect to a Grothendieck topology \cite{Law72,Tie76}.
\item In pointfree topology, nuclei characterize sublocales of locales \cite{FS79,Joh82}.
\item In logic, nuclei model the so-called \emph{lax modality} \cite{FM97} (see also \cite{Gol81}).
As such, nuclear implicative semilattices provide algebraic semantics for the
$\vee$-free fragment of the \emph{Lax Logic} of \cite{FM97}, an intuitionistic modal logic with interesting links to computer science since lax modality is used to reason about formal verification of hardware \cite{Men91}.
\item In \cite{BH19} nuclei were used as a unifying tool for different semantics of intuitionistic logic.
\end{itemize}

Diego's proof that $\is$ is locally finite is algebraic, and it is unclear how to generalize it to $\nis$. Instead we develop a different technique based on duality theory and the coloring technique which allows the construction of universal models. The coloring technique was originally developed in \cite{EG77} to characterize dually when the Esakia space of a Heyting algebra (or an $\sf S4$-algebra) is finitely generated. Since then it has been used extensively in modal logic for constructing universal models (see, e.g., \cite{CZ97,Nic06}). In our considerations we will rely on the general method of \cite{Ghi95}.

Esakia duality is a standard tool for the study of Heyting algebras (see, e.g., \cite{Esa19}). Duality theory for implicative semilattices is more complicated than Esakia duality. For finite implicative semilattices it was developed
by K\"ohler \cite{Koh81}. It was generalized to the infinite case in \cite{VM86,Cel03,BJ13}. Since the dual structures arising in the infinite case
are more complicated to work with, we mostly concentrate on K\"ohler duality for finite implicative semilattices and generalize it to the setting of
finite nuclear implicative semilattices. We prove that $\nis$ has the finite model property, which allows us to mostly work with finite implicative
semilattices and their dual structures.

We generalize the coloring technique to this setting, which allows us to construct universal models for nuclear implicative semilattices.
We prove that the construction of the
$n$-universal model
terminates for each $n$, thus yielding that $\nis$ is locally finite. This generalizes Diego's Theorem to $\nis$. It also provides a different proof of Diego's Theorem for $\is$. While this different proof is more complicated
than Diego's original proof, it is this proof that admits a generalization to $\nis$. Whether Diego's technique generalizes to $\nis$ remains an interesting open problem.

We conclude the paper by showing how our results remain true if we add the bottom element $0$ to the signature of nuclear implicative semilattices, and giving the dual description of the free cyclic nuclear implicative semilattice.

We briefly compare the contributions of this paper with related results in the literature. As we pointed out above, $\bf NIS$ provides algebraic semantics for the $\vee$-free fragment of the Lax Logic.
We note that local finiteness of $\bf NIS$ is in stark contrast with local finiteness of the $\vee$-free and $\to$-free fragments of other intuitionistic modal logics. For example, one of the best studied intuitionistic modal logics is Prior's $\sf MIPC$, which axiomatizes the one-variable fragment of first-order intuitionistic logic (the same way $\sf S5$ axiomatizes the one-variable fragment of classical first-order logic). Neither the $\vee$-free fragment nor the $\to$-free fragment of $\sf MIPC$ is locally finite. In fact, the algebras corresponding to the frames $\mathfrak F_1$ and $\mathfrak F_2$ shown in \cite[Fig.~3 and~4]{BG01} are not locally finite in the $(\wedge,\to,\forall)$- and $(\wedge,\vee,\forall)$-signature, respectively.

A classic corollary of Diego's Theorem is McKay's Theorem \cite{McK68} that every intermediate logic axiomatized by $\vee$-free formulas has the finite model property. Our generalization of Diego's Theorem yields the following generalization of McKay's Theorem: every extension of the Lax Logic axiomatized by $\vee$-free formulas has the finite model property.

The paper is organized as follows. In Section~\ref{sec:nuclear implicative semilattices} we give basic definitions and facts about nuclear implicative semilattices, and prove that the variety of nuclear implicative semilattices is generated by its finite algebras. In Section~\ref{sec: finite duality implicative semilattices} we provide an alternative approach to K\"ohler duality for finite implicative semilattices, which relies heavily on the use of nuclei. In particular, unlike K\"ohler's approach, we describe both contravariant functors explicitly. Because of this, we add proofs where necessary. In Section~\ref{sec: finite duality nuclear implicative semilattices} we extend K\"ohler duality to the setting of nuclear implicative semilattices. In Section~\ref{sec:subalgebras} we characterize dually subalgebras and nuclear subalgebras of finite nuclear implicative semilattices.
We also provide a decomposition of implicative semilattice homomorphisms in the finite case, and show that a similar decomposition does not hold for nuclear homomorphisms.
In Section~\ref{sec:coloring} we utilize the results of Sections~\ref{sec: finite duality nuclear implicative semilattices} and~\ref{sec:subalgebras} to develop the coloring technique, which allows us to build $n$-universal models for the variety of nuclear implicative semilattices. We prove that the $n$-universal model is finite for each $n$, from which we derive our main result that the variety of nuclear implicative semilattices is locally finite, thus generalizing Diego's Theorem. In Section~\ref{sec:bounded case} we prove that the variety of bounded nuclear implicative semilattices remains locally finite, thus generalizing the main result of Section~\ref{sec:coloring}. In Section~\ref{sec:alternative Diego's Theorem} we give an alternative proof of Diego's Theorem by using a strategy analogous to that in Section~\ref{sec:coloring}.
The most challenging part
is to prove that the variety of implicative semilattices is generated by its finite algebras without relying on Diego's Theorem. Finally, in Section~\ref{sec:examples}
we describe the $n$-universal models and free $n$-generated nuclear implicative semilattices for small $n$.

\section{Nuclear implicative semilattices}\label{sec:nuclear implicative semilattices}

We recall that a \textit{meet-semilattice} is an algebra $\mathfrak A=(A,\meet)$ where the binary operation $\meet:A^2\to A$ is associative, commutative, and idempotent. If $(A,\wedge)$ is a meet-semilatice, then we can define a partial order $\le$ on $A$ by $a \le b$ iff $a= a \meet b$. Then $a\meet b$ is the greatest lower bound of $\{a,b\}$, and meet-semilattices can be defined alternatively as partially ordered sets $(A,\le)$ such that each finite subset of $A$ has a greatest lower bound.

\begin{definition}
An \textit{implicative semilattice} is an algebra $\mathfrak A=(A, \meet, \to,1)$ where $(A,\wedge,1)$ is a meet-semilattice with a greatest element
and the binary operation $\to:A^2\to A$ satisfies
\[
a \le b \to c \quad \mbox{ iff } a \meet b \le c.
\]
\end{definition}

\begin{remark}\label{rem:bounds}
\begin{enumerate}
\item[]
\item In each implicative semilattice we have that $1=a \to a$, and that $a \leq b$ iff $a \to b=1$. On the other hand, an implicative semilattice may not have a least element.
\item It is well known that implicative semilattices can be defined equationally, and hence they form a variety (see, e.g., \cite{Koh81}).
\item Every Heyting algebra is clearly an implicative semilattice. The converse is not true in general. However, every finite implicative semilattice is a Heyting algebra.
\end{enumerate}
\end{remark}

\begin{definition}
A map between two implicative semilattices is an \textit{implicative semilattice homomorphism} provided it preserves the operations $\meet$ and $\to$. Let $\is$ denote the category of implicative semilattices and  homomorphisms between them.
\end{definition}

\begin{remark}
Although finite implicative semilattices are Heyting algebras, implicative semilattice homomorphisms do not have to preserve finite joins, and hence may not be Heyting algebra homomorphisms.
\end{remark}

The well-known correspondence between congruences and filters of Heyting algebras extends to implicative semilattices. Thus, as with Heyting algebras, an implicative semilattice is subdirectly irreducible iff it has the second largest element (see, e.g., \cite{Koh81}).

\begin{definition}
Let $(A,\meet,\to)$ be an implicative semilattice.
\begin{enumerate}
\item A subset $B$ of $A$ is a \textit{subalgebra} of $A$ if it closed under $\meet,\to,1$.
\item A subalgebra $B$ of $A$ is a \textit{total subalgebra} if $a \in A$ and $b \in B$ imply $a \to b \in B$.
\end{enumerate}
\end{definition}

We next recall from the introduction that a nucleus $j$ on an implicative semilattice (or more generally on a meet-semilattice) $A$ is a unary function $j:A\to A$ that is inflationary ($a \le j(a)$), idempotent ($j(a)=j(j(a))$) and meet-preserving ($j(a \meet b)=j(a) \meet j(b)$).

As we pointed out in the introduction, nuclei play a fundamental role in pointfree topology as they characterize sublocales or equivalently regular epimorphisms of frames. It is well known that all nuclei on a frame also form a frame. The following are well-known nuclei on a frame:
\begin{itemize}
\item $c_a(b)=a \join b$ (a \textit{closed} nucleus);
\item $o_a(b)=a \to b$ (an \textit{open} nucleus);
\item $w_a(b)=(b \to a ) \to a$.
\end{itemize}
It is well known that each nucleus on a frame is a join of $c_a\wedge o_b$, and a meet of $w_a$ (see, e.g., \cite{Joh82,picado2012frames}). Nuclei also play an important role in the semantic hierarchy of intuitionistic logic \cite{BH19}.

\begin{definition}
For an implicative semilattice $A$ and a nucleus $j$ on it, let
\[
A_j = \{ j(a) \mid a\in A\}.
\]
\end{definition}

It is easy to see that $A_j$ is the set of fixpoints of $j$; that is,
\[
A_j=\{ a \in A \mid j(a)=a \}.
\]

The next proposition, which is well known, relates total subalgebras and fixpoints of nuclei on implicative semilattices.

\begin{proposition} \label{prop:nucleus induced by subalgebra}
Let $A$ be an implicative semilattice.
\begin{enumerate}
\item If $j$ is a nucleus on $A$, then $A_j$ is a total subalgebra of $A$.
\item For a finite subalgebra $B$ of $A$, define $k$ on $A$ by $k = \bigwedge\{w_b \mid b \in B\}$.
\begin{enumerate}
\item $k$ is a nucleus on $A$.
\item $B$ is a Heyting subalgebra of $(A_k, \meet, \join_k, \to, 0_k)$ where $a\join_k b=k(a\vee b)$ and $0_k=k(0)$.
\item If $B$ is a total subalgebra of $A$, then $B=A_k$.
\end{enumerate}
\end{enumerate}
\end{proposition}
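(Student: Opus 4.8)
The plan is to dispatch the four assertions in turn, with the real work concentrated in (2)(a) and (2)(b). For (1), the easy points are $j(1)=1$ (from $1\le j(1)\le 1$) and closure of $A_j$ under $\meet$ (from $j(a\meet b)=j(a)\meet j(b)=a\meet b$ when $a,b$ are fixpoints). The substantive point is the totality condition, which also yields closure under $\to$ as the special case $a,b\in A_j$: given $a\in A$ and $b\in A_j$, I would compute
\[
j(a\to b)\meet a\;\le\;j(a\to b)\meet j(a)\;=\;j\big((a\to b)\meet a\big)\;\le\;j(b)\;=\;b ,
\]
so that $j(a\to b)\le a\to b$; together with $a\to b\le j(a\to b)$ this gives $a\to b\in A_j$. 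Here the first inequality uses $a\le j(a)$, the middle equality uses that $j$ preserves $\meet$, and the last equality uses $b\in A_j$.

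For (2)(a) I would reduce to two standard facts. First, each $w_b$ is a nucleus on $A$, the implicative-semilattice analogue of the frame nucleus recalled above; this is routine from the implicative-semilattice axioms, the key identity being $((x\to b)\to b)\to b=x\to b$, from which idempotency and meet-preservation of $w_b$ follow. Second, a finite meet of nuclei is again a nucleus: writing $k(a)=\bigwedge_{b\in B}w_b(a)$, which is a well-defined finite meet since $B$ is finite, inflationarity and meet-preservation are immediate from this pointwise description, while idempotency follows because each $w_b$ is monotone and idempotent, so $k(a)\le w_b(a)$ gives $w_b(k(a))\le w_b(w_b(a))=w_b(a)$, whence $k(k(a))=\bigwedge_{b\in B}w_b(k(a))\le\bigwedge_{b\in B}w_b(a)=k(a)$. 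I would then record the observation, used repeatedly below, that $B\subseteq A_k$: for $b_0\in B$ one has $k(b_0)\le w_{b_0}(b_0)=(b_0\to b_0)\to b_0=b_0$, and $b_0\le k(b_0)$ by inflationarity. Granting this, (2)(c) is immediate: if $B$ is a total subalgebra and $c\in A_k$, then $c=\bigwedge_{b\in B}\big((c\to b)\to b\big)$, and totality gives $c\to b\in B$ and hence $(c\to b)\to b\in B$, so $c$ is a finite meet of elements of $B$ and thus lies in $B$; therefore $A_k\subseteq B$, and with $B\subseteq A_k$ we conclude $B=A_k$.

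Finally (2)(b). By (1), $A_k$ is a total subalgebra of $A$, hence an implicative semilattice with top $1$ and the same $\meet$ and $\to$, and I would upgrade this to a Heyting algebra. Its bottom is $0_k=k(0)=\bigwedge B$: indeed $\bigwedge B\in B\subseteq A_k$, while every element $k(a)=\bigwedge_{b\in B}\big((a\to b)\to b\big)$ of $A_k$ satisfies $k(a)\ge\bigwedge_{b\in B}b=\bigwedge B$. For binary joins I would propose the explicit formula
\[
c_1\join_k c_2=\bigwedge_{b\in B}\left(\left((c_1\to b)\meet(c_2\to b)\right)\to b\right)\qquad(c_1,c_2\in A_k)
\]
and check that this element lies in $A_k$ (again via $((x\to b)\to b)\to b=x\to b$) and is the least upper bound of $\{c_1,c_2\}$ in $A_k$; when $c_1\vee c_2$ happens to exist in $A$ this coincides with $k(c_1\vee c_2)$, since then $(c_1\to b)\meet(c_2\to b)=(c_1\vee c_2)\to b$. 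It then remains to see that $B$ is closed under the operations of $A_k$ and that they restrict on $B$ to its own Heyting structure (which exists because $B$ is a finite implicative semilattice, by Remark~\ref{rem:bounds}): $1,\meet,\to$ agree because $B$ is a subalgebra, $0_B=\bigwedge B=0_k$, and for $b_1,b_2\in B$ one computes $b_1\join_k b_2=\bigwedge_{b\in B}\big(((b_1\vee_B b_2)\to b)\to b\big)=k(b_1\vee_B b_2)=b_1\vee_B b_2$, using $(b_1\to b)\meet(b_2\to b)=(b_1\vee_B b_2)\to b$ in $B$ and $b_1\vee_B b_2\in B\subseteq A_k$. Hence $B$ is a Heyting subalgebra of $A_k$. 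The main obstacle is precisely this last item: parts (1), (2)(a), (2)(c) are essentially mechanical once the identity $((x\to b)\to b)\to b=x\to b$ is in hand, whereas (2)(b) requires producing the correct description of joins in $A_k$ and verifying that it stays inside $A_k$.
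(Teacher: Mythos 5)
Your proof is correct, but it is considerably more self-contained than the paper's. The paper dispatches (1), (2a), and (2b) by citation to Bezhanishvili--Ghilardi \cite{BG07} (Remark~8 and Proposition~36 there) and only writes out the argument for (2c); your argument for (2c) is word-for-word the paper's: $a=k(a)=\bigwedge\{(a\to b)\to b\mid b\in B\}$, totality puts each $a\to b$ in $B$, and finiteness of the meet finishes it. What your version buys is a complete verification in the pure implicative-semilattice setting: in particular, for (2b) you supply the intrinsic join formula $c_1\join_k c_2=\bigwedge_{b\in B}\bigl(((c_1\to b)\meet(c_2\to b))\to b\bigr)$, which makes sense even when $A$ has no binary joins, and you check it agrees with $k(c_1\vee c_2)$ and with $\vee_B$ where those exist; the paper's statement of (2b) tacitly presupposes the ambient joins of the cited source. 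All the computations you sketch do go through (I checked that your candidate join is a fixpoint of $k$, is the least upper bound in $A_k$, and restricts to $\vee_B$ on $B$ via distributivity of the finite Heyting algebra $B$). One small overstatement: meet-preservation of $w_b$ does not follow from the identity $((x\to b)\to b)\to b=x\to b$ alone --- the inequality $w_b(x)\meet w_b(y)\le w_b(x\meet y)$ needs a separate residuation computation (from $x\meet y\meet z\le b$ deduce $x\meet z\le y\to b$, then apply $w_b(y)$, then $w_b(x)$). Since $w_b$ being a nucleus is a standard fact that the paper itself only recalls, this is a presentational slip rather than a gap, but you should either carry out that two-line computation or cite it explicitly.
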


\begin{proof}
For (1) see, e.g., \cite[Rem.~8]{BG07}; for (2a) and (2b) see, e.g., \cite[Prop.~36]{BG07}. To see (2c), it remains to show that $A_k\subseteq B$. Let $a \in A_k$. Then $a=k(a)=\bigwedge\{(a \to b) \to b \mid b \in B\}$. Since $B$ is a total subalgebra of $A$, we have that $a \to b \in B$ for all $b \in B$. Thus, $k(a)\in B$.
\end{proof}

The next definition is central to the paper.

\begin{definition}
A \textit{nuclear implicative semilattice} is an algebra $\mathfrak A=(A,\meet,\to,j)$ where $(A,\meet,\to)$ is an implicative semilattice and $j$ is a nucleus on $A$.
\end{definition}

Clearly nuclear implicative semilattices form a variety.

\begin{definition}
An implicative semilattice homomorphism between two nuclear implicative semilattices is called a
\textit{nuclear homomorphism}
provided it preserves $j$. Let $\nis$
be
the category of nuclear implicative semilattices and
nuclear
homomorphisms between them.
\end{definition}

Since each nucleus $j$ is inflationary, filters are always closed under $j$. Thus, we obtain the following characterization of congruences and subdirectly irreducible nuclear implicative semilattices.

\begin{proposition}\label{prop:hom images-filters in nis}
Congruences of a nuclear implicative semilattice $\mathfrak A$ correspond to filters of $\mathfrak A$. Therefore, $\mathfrak A$ is subdirectly irreducible iff it is subdirectly irreducible as an implicative semilattice $($which happens iff $\mathfrak A$ has the second largest element$)$.
\end{proposition}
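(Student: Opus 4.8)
The plan is to reduce everything to the already-recalled correspondence between congruences and filters of implicative semilattices, and to check that passing to a nucleus changes nothing. First I would note that the filters of $\mathfrak A$ are literally the same as the filters of the underlying implicative semilattice $A$: a filter is an up-set closed under $\meet$ containing $1$, and since $a\le j(a)$ every such up-set is automatically closed under $j$; so there is no extra condition to impose on the filter side.

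The crux is the other side: I must show that every implicative semilattice congruence of $A$ is automatically compatible with $j$. Given a filter $F$, the associated congruence $\theta_F$ satisfies $a\mathrel{\theta_F}b$ iff $a\to b\in F$ and $b\to a\in F$, so it suffices to check that $a\to b\in F$ implies $j(a)\to j(b)\in F$ (the reverse implication then follows by symmetry). This comes down to the inequality $j(a\to b)\le j(a)\to j(b)$, which holds because $j(a\to b)\meet j(a)=j\big((a\to b)\meet a\big)\le j(b)$, using that $j$ preserves $\meet$, is monotone, and $(a\to b)\meet a\le b$; together with $a\to b\le j(a\to b)$ this gives $a\to b\le j(a)\to j(b)$, so $a\to b\in F$ forces $j(a)\to j(b)\in F$. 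Hence $\theta_F$ is a congruence of $\mathfrak A$. Conversely, any congruence of $\mathfrak A$ is in particular a congruence of $A$ (it preserves $\meet$ and $\to$), and $\theta\mapsto\{a: a\mathrel{\theta}1\}$ recovers the filter it came from. Thus the maps $F\mapsto\theta_F$ and $\theta\mapsto\{a: a\mathrel{\theta}1\}$ are mutually inverse order isomorphisms between the filters of $\mathfrak A$ and the congruences of $\mathfrak A$, proving the first assertion.

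For the second assertion I would use that an algebra is subdirectly irreducible iff it has a least nontrivial congruence. Transporting this across the order isomorphism just established, $\mathfrak A$ is subdirectly irreducible iff there is a least filter properly containing $\{1\}$ — and this is exactly the condition characterizing subdirect irreducibility of $A$ as an implicative semilattice, which (as recalled above) amounts to $A$ having a second largest element, namely the generator of that least filter. The only point requiring any care — and it is more bookkeeping than obstacle — is the verification of $j(a\to b)\le j(a)\to j(b)$ together with the observation that the filter/congruence dictionary is order-preserving, so that "least nontrivial congruence" matches "least filter strictly above $\{1\}$".
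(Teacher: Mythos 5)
Your proof is correct and follows the same route the paper takes, namely reducing to the standard filter--congruence correspondence for implicative semilattices; the paper itself states the proposition without proof, justified only by the preceding remark that filters are closed under $j$ because $j$ is inflationary. You rightly identify that the real point to check is the inequality $a\to b\le j(a\to b)\le j(a)\to j(b)$ (so that $\theta_F$ is compatible with $j$), and your verification of it via $j\bigl((a\to b)\meet a\bigr)\le j(b)$ is exactly the standard argument the paper leaves implicit.
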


Nevertheless, Diego's proof
\cite{Die66} (see also \cite{Koh81})
does not generalize directly to the setting of nuclear implicative semilattices. The key difference is that if $\mathfrak A$ is a subdirectly irreducible nuclear implicative semilattice, then the subset obtained by removing the second largest element of $\mathfrak A$, while closed under $\meet$ and $\to$, may not be closed under $j$.

\begin{definition}
Let $\mathfrak{A}=(A,\meet,\to,j)$ be a nuclear implicative semilattice.
\begin{enumerate}
\item A subalgebra of $(A,\meet,\to)$ is called a \textit{nuclear subalgebra} if it is closed under $j$.
\item If $\mathfrak B$ is a total subalgebra and a nuclear subalgebra of $\mathfrak A$, then we call it a \emph{total nuclear subalgebra} of $\mathfrak A$.
\end{enumerate}
\end{definition}

As the first step towards proving that $\nis$ is locally finite, we show that $\nis$ is generated by its finite algebras. For this we utilize Diego's Theorem that $\is$ is locally finite. To see that $\nis$ is generated by its finite algebras, it is sufficient to show that each equation $t(x_1, \ldots, x_n) =1$ that is not derivable from the equations defining $\nis$ is refuted in some finite nuclear implicative semilattice.

\begin{theorem}\label{thm:nis generated by finite algebras}
The variety $\nis$ is generated by its finite algebras.
\end{theorem}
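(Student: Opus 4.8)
The plan is to take an arbitrary equation $t(x_1,\dots,x_n)=1$ in the language of nuclear implicative semilattices that fails in some nuclear implicative semilattice $\mathfrak A=(A,\meet,\to,j)$, and produce a finite nuclear implicative semilattice refuting it. First I would reduce to the subdirectly irreducible case: since the equation fails in $\mathfrak A$, by Proposition~\ref{prop:hom images-filters in nis} (via the correspondence of congruences with filters) we may assume $\mathfrak A$ is subdirectly irreducible, so it has a second largest element, and there is a valuation $\nu$ with $\nu(t)\neq 1$, hence $\nu(t)$ lies below the second largest element.

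Next I would pass to a finitely generated substructure. Let $B_0$ be the implicative subsemilattice of $(A,\meet,\to)$ generated by $\nu(x_1),\dots,\nu(x_n)$; by Diego's Theorem, $B_0$ is finite. The difficulty is that $B_0$ need not be closed under $j$, and — as the excerpt explicitly warns — there is no reason the naive closure under $j$ stays finite, nor that removing the top from a $j$-closed structure keeps it $j$-closed. So the key move is to replace $j$ on $B_0$ by a \emph{new} nucleus that (i) is definable from $B_0$ alone, hence keeps things finite, and (ii) still refutes the equation. Here I would invoke Proposition~\ref{prop:nucleus induced by subalgebra}(2): taking $B=B_0$, the map $k=\bigwedge\{w_b\mid b\in B_0\}$ is a nucleus on $A$, and since $B_0$ is finite, $(B_0)$ together with the operations $\meet,\to$ is already closed enough that we can work inside the finite Heyting algebra structure it carries. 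Concretely, I would argue that $k$ restricts to a nucleus on a finite implicative subsemilattice $C\supseteq B_0$ of $A$ that is closed under $\meet,\to$ and under $k$ — for instance $C=(A_k$ generated appropriately$)$, using that $A_k$ itself, while possibly infinite, contains a finite $\meet,\to$-subalgebra containing $B_0$ and closed under $k$ because $k(a)=\bigwedge\{(a\to b)\to b\mid b\in B_0\}$ is computed from finitely many elements of the finite algebra generated by $a$ and $B_0$. Iterating the finite-generation argument of Diego finitely many times — close under $\meet,\to$, then apply $k$ to the generators, close again, and check this stabilizes because $k$ only ever introduces elements of the form $\bigwedge_{b\in B_0}(a\to b)\to b$ with $a$ ranging over a set we control — yields a finite $(C,\meet,\to,k)$.

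The last step is to transport the refuting valuation. Since $a\le j(a)$ and $j$ is a nucleus, and $w_b(a)=(a\to b)\to b \le$ anything that $j$ sends $a$ below when $j(b)=b$, one checks $k\le j$ pointwise on the elements where it matters, or more precisely that $k$ agrees with a nucleus compatible with the original valuation on the relevant finite set; then define $\nu'(x_i)=\nu(x_i)\in C$ and observe that $t^{\mathfrak C}(\nu'(x_1),\dots,\nu'(x_n))$ computed with $k$ still lies below the second largest element of $C$ (which exists because finite implicative semilattices are subdirectly irreducible once we quotient suitably, or because $C$ embeds order-and-operation-faithfully into $A$ on the generated part). Hence $t=1$ fails in the finite nuclear implicative semilattice $(C,\meet,\to,k)$, as required.

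The main obstacle, and the step needing the most care, is controlling the interaction between $j$ and finiteness: one must ensure that replacing $j$ by the Boolean-closure-style nucleus $k$ both (a) terminates in finitely many steps when combined with Diego's closure, and (b) does not destroy the refutation — i.e. that the value of $t$ under the new nucleus $k$ is still $\neq 1$. Part (b) is what forces the choice of $k$ rather than an arbitrary finite approximation of $j$: because $B_0$ already contains $\nu(x_1),\dots,\nu(x_n)$ and $k$ is built precisely to fix every element of $B_0$, the subterms of $t$ built without $j$ land in $B_0$ and are unchanged, while the $j$-subterms only get \emph{larger} under $k$ in a controlled way, so if $t$ evaluates strictly below $1$ in $\mathfrak A$ one can check it still does in $(C,\meet,\to,k)$.
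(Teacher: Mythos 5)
There is a genuine gap, and it sits exactly where you flag it: the passage from $j$ to a finite surrogate nucleus. Your choice $k=\bigwedge\{w_b\mid b\in B_0\}$ is determined by the subalgebra $B_0$ alone and carries no information about $j$. Worse, by Proposition~\ref{prop:nucleus induced by subalgebra}(2b) every element of $B_0$ is a fixpoint of $k$ (indeed $k(b')\le w_{b'}(b')=(b'\to b')\to b'=b'$ for $b'\in B_0$), so $k$ restricts to the \emph{identity} on $B_0$. Since your $B_0$ is generated by the values $\nu(x_1),\dots,\nu(x_n)$ alone, every $j$-subterm gets evaluated by $k$ to its argument. Concretely, the equation $j(x)\to x=1$ fails in any $\mathfrak A$ with $j(a)\nleq a$, but holds identically in $(C,k)$ for your $k$, since $k(\nu(x))=\nu(x)$. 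So your construction does not preserve the refutation, and the heuristic that ``$j$-subterms only get larger under $k$ in a controlled way'' cannot be repaired: $\to$ is antitone in its first argument, so even a uniform inequality between $j$ and $k$ propagates in no useful direction through the term.

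The paper's proof differs in two essential ways. First, the generating set is not $\{\nu(x_1),\dots,\nu(x_n)\}$ but $F=\{t'(a_1,\dots,a_n)\mid t'\text{ a subterm of }t\}$, so in particular the value of every $j$-subterm already lies in the finite $(\meet,\to)$-subalgebra $B=\langle F\rangle$ furnished by Diego's Theorem. Second, the surrogate nucleus is $j_B(b)=\bigwedge\{x\in B\cap A_j\mid b\le x\}$, built from the actual fixpoint set $A_j$ of the original nucleus; one then has $j(b)\le j_B(b)$ with equality whenever $j(b)\in B$, which by the choice of $F$ holds for every subterm value, so the evaluation of $t$ in $(B,j_B)$ coincides with that in $\mathfrak A$. (The only nontrivial verification is that $j_B$ preserves meets, which uses that implicative semilattices are distributive semilattices, with explicit witnesses lying in $B\cap A_j$.) Note also that no iteration or stabilization argument is needed --- $B$ need not be closed under $j$ at all, and $(B,j_B)$ need not be a nuclear subalgebra of $\mathfrak A$ --- and the reduction to the subdirectly irreducible case in your first step is superfluous.
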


\begin{proof}
Let $t(x_1, \ldots,x_n)$ be a term in the language of nuclear implicative semilattices such that the equation $t(x_1, \ldots, x_n) =1$ is not derivable from the equations defining $\nis$. Then there is a nuclear implicative semilattice $\mathfrak A=(A,\meet,\to,j)$ and $a_1, \ldots,a_n \in A$ such that $t(a_1, \ldots, a_n) \neq 1$ in $\mathfrak A$. Set
\[
F=\{ t'(a_1, \ldots,a_n) \mid \mbox{$t'$ is a subterm of $t$} \}.
\]
Then $F$ is a finite subset of $A$. Let $B$ be the subalgebra of $(A,\meet,\to)$ generated by $F$. By Diego's Theorem, $B$ is finite. Define $j_B$ on $B$ by
\[
j_B(b)= \bigwedge \{ x \in B \cap A_j \mid b \le x \}.
\]
We clearly have that $j(b)\le j_B(b)$, and that if $j(b)\in B$, then $j(b)=j_B(b)$. We show that $j_B$ is a nucleus on $B$. By definition, $b \le j_B(b)$. Also $j_B(j_B(b))=j_B(b)$ because $j_B(b) \in B \cap A_j$. For $a,b\in B$ we have
\begin{align*}
j_B(a) \meet j_B(b) & = \bigwedge \{ x \in B \cap A_j \mid a \le x \} \meet \bigwedge \{ y \in B \cap A_j \mid b \le y \}  \\
& = \bigwedge \{ x \meet y \mid x,y \in B \cap A_j, \: a \le x, \, b \le y \}.
\end{align*}
On the other hand,
\begin{align*}
j_B(a \meet b) & = \bigwedge \{ z \in B \cap A_j \mid a \meet b \le z \}.
\end{align*}
We show that
\[
\{ x \meet y \mid x,y \in B \cap A_j, \: a \le x, \, b \le y \}=\{ z \in B \cap A_j \mid a \meet b \le z \}.
\]
The left-to-right inclusion is clear. The right-to-left inclusion is a consequence of the fact that every implicative semilattice is a \textit{distributive semilattice}; that is, if $a \meet b \le z$, then there are $x \ge a$ and $y \ge b$
with
$z=x \meet y$. As follows from \cite[Prop.~2.1]{BJ08}, the elements $x,y$ can be taken to be
\[
x=(((a \to z) \meet (b \to z))\to z)\meet (b \to z) \quad \mbox{ and } \quad y=(((a \to z) \meet (b \to z))\to z)\meet (a \to z).
\]
Observe that $x,y \in B$ because $a,b,z\in B$ and $B$ is a subalgebra of $A$, and $x,y \in A_j$ since $z\in A_j$ and $A_j$ is a total subalgebra of $A$. Therefore, $j_B(a \meet b) = j_B(a) \meet j_B(b)$, and hence $j_B$ is a nucleus on $B$. Thus, $(B,j_B)$ is a finite nuclear implicative semilattice (although it may not be a nuclear subalgebra of $\mathfrak A$).

Since $B$ is a subalgebra of $(A,\meet,\to)$ and $j(b) \in B$ implies $j_B(b)=j(b)$, for each subterm $t'$ of $t$, the computation of $t'(a_1, \ldots, a_n)$ in $\mathfrak A$ is the same as that in $(B,j_B)$. Therefore, $t(a_1, \ldots, a_n) \neq 1$ in $\mathfrak A$ implies that $t(a_1, \ldots, a_n) \neq 1$ in $(B,j_B)$. Thus, $t(x_1, \ldots, x_n) =1$ is refuted in a finite nuclear implicative semilattice.
\end{proof}

\section{K\"ohler duality for finite implicative semilattices} \label{sec: finite duality implicative semilattices}

In this section we recall K\"ohler duality \cite{Koh81} for finite implicative semilattices. Our approach is different from K\"ohler's in that we explicitly define the functor from the category of finite implicative semilattices. We also work with nuclei instead of total subalgebras, and follow the standard approach in logic in working with upsets instead of downsets of a poset. Because of these differences, we provide details where necessary.

We start by recalling the well-known duality for finite Heyting algebras. It is a consequence of Esakia duality \cite{Esa74} for all Heyting algebras, but can also be derived directly (see, e.g., \cite{JT66}).

\begin{definition}
Let $(X, \leq)$ be a poset (partially ordered set). For $U\subseteq X$ let
\begin{eqnarray*}
\up U &=& \{ x \in X \mid \exists u \in U : u \le x \} \\
\dn U &=& \{ x \in X \mid \exists u \in U : x \le u \}.
\end{eqnarray*}
If $U=\{x\}$, we simply write $\up x$ and $\dn x$. We call $U$ an \textit{upset} if $U=\up U$ and a \emph{downset} if $U = \dn U$.

The set $\upsets(X)$ of all upsets of $X$ ordered by inclusion has naturally the structure of a Heyting algebra in which the meet and join are set-theoretic intersection and union, and
\[
U \to V = X \setminus \dn (U \setminus V) = \{ x \in X \mid (\forall y \ge x) (y \in U \Rightarrow y \in V) \}.
\]
\end{definition}

\begin{definition}
A map $f$ between two posets $(X,\le)$ and $(Y,\le)$ is a \emph{p-morphism}
(or \textit{bounded morphism})
if
\begin{itemize}
\item $x \le x'$ implies $f(x) \le f(x')$;
\item $f(x) \le y$ implies that there is $z \in X$ with $x\le z$ and $f(z)=y$.
\end{itemize}
\end{definition}

\begin{definition}
Let $\haf$ be the category of finite Heyting algebras and Heyting algebra homomorphisms, and let $\pf$ be the category of finite posets and p-morphisms.
\end{definition}

We then have the following well-known theorem.

\begin{theorem}\label{thm:haf}
$\haf$ is dually equivalent to $\pf$.
\end{theorem}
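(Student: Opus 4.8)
The statement is the finite Esakia/Priestley-type duality $\haf \simeq \pf^{\mathrm{op}}$, which is essentially the restriction of the standard Birkhoff-style representation of finite distributive lattices, refined to respect the Heyting implication and the morphisms. I would prove it by exhibiting the two contravariant functors explicitly and checking that they are mutually (naturally) inverse.

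The plan is to define $(-)_* : \haf \to \pf$ on objects by sending a finite Heyting algebra $A$ to the poset $J(A)$ of its join-irreducible elements, ordered by the \emph{reverse} of the order inherited from $A$ (so that $p \le_{J(A)} q$ iff $q \le_A p$); this matches the convention in the excerpt that $\upsets(X)$ carries the Heyting structure and that implication is computed via downsets of $X$. Dually, define $(-)^* : \pf \to \haf$ by $X \mapsto \upsets(X)$, which is a Heyting algebra by the displayed formulas. On morphisms: a Heyting homomorphism $h : A \to B$ induces $h_* : J(B) \to J(A)$ by $h_*(q) = \bigwedge\{a \in A \mid q \le h(a)\}$, and one checks this is well defined (the meet is join-irreducible because $h$ preserves finite meets and $B$ is finite) and is a p-morphism; the back-condition of a p-morphism corresponds precisely to $h$ preserving $\to$ (equivalently preserving $\vee$ under the reversed order). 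Conversely a p-morphism $f : X \to Y$ induces $f^* : \upsets(Y) \to \upsets(X)$ by $f^*(U) = f^{-1}(U)$, and monotonicity of $f$ makes $f^{-1}(U)$ an upset while the back-condition of $f$ is exactly what forces $f^{-1}$ to commute with $\to$ (the forward condition of monotonicity alone gives a bounded-lattice homomorphism; the p-morphism condition upgrades it to a Heyting homomorphism).

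Then I would verify the two natural isomorphisms. For $A \in \haf$, the map $\eta_A : A \to \upsets(J(A))$ sending $a \mapsto \{p \in J(A) \mid p \le_A a\}$ (an upset in the reversed order) is a Heyting isomorphism: injectivity and surjectivity are Birkhoff's theorem for finite distributive lattices, and preservation of $\to$ is a short computation using that $p \le a \to b$ iff $p \wedge a \le b$ together with the description of $\to$ in $\upsets$. For $X \in \pf$, the map $\varepsilon_X : X \to J(\upsets(X))$ sending $x \mapsto \up x$ is an order isomorphism: the join-irreducible upsets of a finite poset are exactly the principal upsets $\up x$, and $\up x \subseteq \up y$ iff $y \le x$, which under the reversed order on $J(\upsets(X))$ says $x \le_{J(\upsets(X))}$-corresponds-to-$y$ appropriately — i.e.\ $\varepsilon_X$ is an iso of posets. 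Naturality of $\eta$ and $\varepsilon$ is a routine diagram chase unwinding the definitions of $h_*$ and $f^*$.

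The main obstacle — really the only point requiring care rather than bookkeeping — is the morphism correspondence, specifically showing that $h_*$ lands in $J(A)$ and is a p-morphism, and dually that $f^*$ preserves $\to$. The heart of both is the same equivalence: for a meet-preserving map between finite Heyting algebras, preservation of $\to$ is equivalent to the back-condition on the dual map. I would isolate this as the key lemma and prove it directly from the adjunction $a \le b \to c \iff a \wedge b \le c$; everything else (Birkhoff representation, the Heyting structure on $\upsets(X)$, functoriality) is standard and I would cite or sketch it briefly, since the paper says it provides details only "where necessary."
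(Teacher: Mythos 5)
Your outline is correct and is essentially the standard argument the paper has in mind: the paper states this theorem as well known (citing Esakia duality and J\'onsson--Tarski) and merely records the two functors, which coincide with yours up to the cosmetic switch between join-irreducible elements with reversed order and meet-prime elements (the paper itself notes that the join-prime version is the usual one). The one point to tighten is your justification that $h_*(q)=\bigwedge\{a\mid q\le h(a)\}$ is join-irreducible: this needs $h$ to preserve finite \emph{joins} (so that $h^{-1}(\up q)$ is a prime filter), not just finite meets, but that is available since $h$ is a Heyting homomorphism.
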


This duality is obtained by the contravariant functors $(\;)_*:\haf \to \pf$ and $(\;)^*:\pf \to \haf$. The functor $(\;)^*$ associates with each finite poset  $(X, \le)$ the Heyting algebra $X^*=\upsets(X)$; and with each p-morphism $f:X \to Y$ the Heyting algebra homomorphism $f^*:\upsets(Y) \to \upsets(X)$ given by $f^*(V)=f^{-1}(V)$.

The functor $(\;)_*$ is usually defined by associating with each finite Heyting algebra $A$ the poset of join-prime elements of $A$. Since we will mainly work in the signature of meet-semilattices, we will instead work with meet-prime elements.

\begin{definition}
Let $A$ be a meet-semilattice. An element $m\in A\setminus\{1\}$ is \emph{meet-prime} if $a \meet b \le m$ implies that $a \le m$ or $b \le m$. Let $X_A$ be the set of meet-prime elements of $A$. We let $\sqsubseteq$ be the dual of the restriction of $\le$ to $X_A$; that is, $m \sqsubseteq n$ iff $n \leq m$ in $A$. Then $(X_A,\sqsubseteq)$ is a poset.
\end{definition}

The functor $(\;)_*$ is defined by associating with each finite Heyting algebra $A$ the poset $A_*=(X_A,\sqsubseteq)$; and with each Heyting algebra homomorphism $h:A \to B$ the p-morphism $h_*:X_B \to X_A$ given by
\[
h_*(y)=\bigvee \{ a \in A \mid h(a) \le y \};
\]
that is, $h_*$ is the right adjoint of $h$ restricted to the set of meet-prime elements.

The functors $(\;)_*,(\;)^*$ yield a dual adjunction, where the natural isomorphisms
\[
\alpha_A:A \to \upsets(X_A) \mbox{ and } \varepsilon_X:X \to X_{\upsets(X)}
\]
are given by
\begin{equation}\label{eq:epsilon}\tag{$\dagger$}
\alpha_A(a)=\{ x \in X_A \mid a \nleq x \} \mbox{ and } \varepsilon_X(x)=X \setminus \dn x.
\end{equation}
This gives the desired dual equivalence between $\haf$ and $\pf$.

We next extend this duality to the setting of finite implicative semilattices. Since each finite implicative semilattice is a Heyting algebra, it is isomorphic to the Heyting algebra of upsets of a finite poset. Thus, at the object level, the duality for finite implicative semilattices is the same as for finite Heyting algebras. The key difference is in describing dually implicative semilattice homomorphisms by means of special partial p-morphisms.

\begin{definition}
Let $\isf$ be the full subcategory of $\is$ consisting of finite implicative semilattices.
\end{definition}

Let $(X,\le)$ be a poset. As usual, for $x,y\in X$ we write $x<y$ if $x\le y$ and $x\ne y$. For a partial function $f$ between posets, we denote by $D$ the domain of $f$.

\begin{definition}\label{def:kmor}
Let $(X, \le)$ and $(Y, \le)$ be two posets. We call a partial function $f:X \to Y$ a \textit{\kmor{}} if for each $x, x' \in D$ and $y \in Y$ we have
\begin{enumerate}
\item $x < x'$ implies $f(x) < f(x')$;
\item $f(x) < y$ implies that there is $z \in D$ with $x<z$ and $f(z)=y$.
\end{enumerate}
\end{definition}

\begin{remark}\label{rem:p-morphism}
\begin{enumerate}
\item[]
\item In Definition~\ref{def:kmor}(2), replacing $<$ with $\le$ results in an equivalent condition. However, Definition~\ref{def:kmor}(1) is strictly stronger than the corresponding condition involving $\le$.
\item If $f:X \to Y$ is a \kmor{}, then $f(\up x)$ is an upset of $Y$ for each $x \in X$, and so $U$ an upset of $X$ implies that $f(U)$ is an upset of $Y$. If in addition $x \in D$, then $f(\up x)=\up f(x)$.
\end{enumerate}
\end{remark}

\begin{definition}
Let $\pfk$ be the category of finite posets and \kmor{}s. If $f: X \to Y$ and $g:Y \to Z$ are \kmor{}s with domains $D$ and $D'$, then set-theoretic composition
$gf:X \to Z$
is a \kmor{} with the domain $f^{-1}(D') \subseteq D$. Thus, identity morphisms are total identity functions.
\end{definition}

\begin{definition}
Let $(X,\le)$ be a poset and $A\subseteq X$. As usual, $x \in A$ is called a \textit{maximal} point of $A$ if $x \le a$ implies $x=a$ for each $a \in A$. Minimal points are defined dually. Let $\max A$ be the set of maximal points and $\min A$ the set of minimal points of $A$.
\end{definition}

\begin{remark}
Since every finite implicative semilattice is a Heyting algebra, $\haf$ is a wide subcategory of $\isf$ (meaning that $\haf$ and $\isf$ have the same objects). On the other hand, $\pf$ is not a subcategory of $\pfk$ as not every p-morphism satisfies Definition~\ref{def:kmor}(1). Nevertheless, $\pf$ is isomorphic to a wide subcategory of $\pfk$, which can be seen as follows. With each p-morphism $f:X \to Y$ we can associate the \kmor{} by restricting the domain of $f$ to the set $D=\{x \in X \mid x \in \max f^{-1}(f(x))\}$. This induces an isomorphism between $\pf$ and the wide subcategory of $\pfk$ given by the \kmor{}s $f:X \to Y$ that satisfy $f(\up x)$ is a principal upset for each $x \in X$.
An alternate duality to K\"ohler duality is developed in \cite{BB09} (see also \cite[Sec.~5]{BJ13}), where $\pf$ is indeed a wide subcategory of the dual category to $\isf$.
\end{remark}

The duality for finite Heyting algebras then extends to the following duality for finite implicative semilattices.

\begin{theorem} [K\"ohler duality]
$\isf$ is dually equivalent to $\pfk$.
\end{theorem}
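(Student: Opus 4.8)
The plan is to keep the object assignments $A\mapsto A_*=(X_A,\sqsubseteq)$ and $X\mapsto X^*=\upsets(X)$ from the Heyting case — they already make sense, since finite implicative semilattices \emph{are} finite Heyting algebras — and to upgrade the two contravariant functors to $(\;)_*\colon\isf\to\pfk$ and $(\;)^*\colon\pfk\to\isf$, checking along the way that the isomorphisms $\alpha_A\colon A\to\upsets(X_A)$ and $\varepsilon_X\colon X\to X_{\upsets(X)}$ remain natural for the larger classes of morphisms. Each $\alpha_A$ is already a bijective implicative semilattice homomorphism and each $\varepsilon_X$ a poset isomorphism, hence a total \kmor{} and an isomorphism in $\pfk$, so once naturality is established we obtain the dual equivalence. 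All the new content is in the morphisms.

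On a \kmor{} $f\colon X\to Y$ with domain $D$ I would define, for $V\in\upsets(Y)$,
\[
f^*(V)=X\setminus{\dn}\,f^{-1}(Y\setminus V)=\{\,x\in X\mid \forall z\in D\;(x\le z\Rightarrow f(z)\in V)\,\}.
\]
It is immediate that $f^*(V)$ is an upset, that $f^*(Y)=X$, and that $f^*$ preserves binary intersections, so the substantial point is that $f^*(V_1\to V_2)=f^*(V_1)\to f^*(V_2)$. The inclusion $\subseteq$ is routine. For $\supseteq$, let $x\in f^*(V_1)\to f^*(V_2)$, let $z\in D$ with $x\le z$, and let $y'\ge f(z)$ with $y'\in V_1$; by Definition~\ref{def:kmor}(2) (which by Remark~\ref{rem:p-morphism}(1) may be used with $\le$) there is $z'\in D$ with $z\le z'$ and $f(z')=y'$. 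Any $w\in D$ with $w\ge z'$ satisfies $f(w)\ge f(z')=y'$ by Remark~\ref{rem:p-morphism}(2), hence $f(w)\in V_1$; so $z'\in f^*(V_1)$, therefore $z'\in f^*(V_2)$ (as $x\le z'$), and thus $f(z')=y'\in V_2$. Since $y'$ was arbitrary, $f(z)\in V_1\to V_2$, and since $z$ was arbitrary, $x\in f^*(V_1\to V_2)$. Functoriality, $(\mathrm{id}_X)^*=\mathrm{id}$ and $(gf)^*=f^*g^*$, is a short computation, the latter using Definition~\ref{def:kmor}(2) and that the domain of $gf$ is $f^{-1}(D')$.

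For $(\;)_*$ I would send a homomorphism $h\colon A\to B$ to the partial map $h_*\colon X_B\to X_A$ whose domain $D$ consists of the $y\in X_B$ for which the downset $\{a\in A\mid h(a)\le y\}$ has a greatest element $a_y$, this $a_y$ is meet-prime, and $y$ is maximal in $(X_B,\sqsubseteq)$ among all $y'$ with $a_{y'}=a_y$; for such $y$ set $h_*(y)=a_y$. (Equivalently: $y\in D$ iff $h^{-1}$ of the prime filter $\{b\mid b\nleq y\}$ is a prime filter of $A$ and $y$ is $\sqsubseteq$-maximal in its fibre. The maximality clause is exactly what the strict inequality in Definition~\ref{def:kmor}(1) forces: without it, a homomorphism from the three-element chain into the four-element chain already produces a constant $h_*$ on a chain, which is not a \kmor{}.) Then Definition~\ref{def:kmor}(1) is immediate from the maximality clause, while Definition~\ref{def:kmor}(2) is the standard ``back'' argument — given a prime filter of $A$ above $h^{-1}(F_y)$ one extends to a prime filter of $B$ above $F_y$ with the prescribed preimage, using that $h$ preserves $\to$, and then truncates to a maximal point of the fibre. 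Functoriality of $(\;)_*$ is again a computation with preimages of prime filters.

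Finally I would check naturality. For $\alpha$ one must show $(h_*)^*\circ\alpha_A=\alpha_B\circ h$: unwinding, $y\in(h_*)^*(\alpha_A(a))$ says precisely that $h(a)\nleq z$ for every $z\in D$ with $y\le z$, and the maximality clause in the definition of $D$ is exactly what makes this equivalent to $h(a)\nleq y$, i.e.\ to $y\in\alpha_B(h(a))$; naturality of $\varepsilon$ for a \kmor{} $f$ is the dual statement and is verified in the same way. The crux of the whole argument is the definition of $(\;)_*$ on morphisms and its compatibility with $(\;)^*$: unlike in the Heyting case, where $h_*(y)=\bigvee\{a\mid h(a)\le y\}$ is automatically meet-prime and the map is total, here one must both discard the $y$ whose filter preimage fails to be prime and truncate each fibre to its maximal points, and it is this truncation that must be shown to be functorial and to match $\alpha$ and $\varepsilon$ exactly. (Alternatively, one could avoid the explicit description of $(\;)_*$ by proving that $(\;)^*$ is full and faithful — faithfulness is easy, and fullness amounts to reconstructing the \kmor{} above from an abstract homomorphism $\upsets(Y)\to\upsets(X)$ — together with essential surjectivity, which is immediate since every finite implicative semilattice is a finite Heyting algebra; but since the point is to exhibit both functors explicitly, we follow the route above.)
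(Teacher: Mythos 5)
Your overall architecture is the paper's: the same object assignments, the same formula $f^*(V)=X\setminus{\dn}f^{-1}(Y\setminus V)$, the same unit and counit $\alpha_A$, $\varepsilon_X$, and the same division of labour (all the work is in $(\;)_*$ on morphisms). Your verification that $f^*$ preserves $\to$ is correct and is exactly the ``straightforward'' check the paper omits. Where you genuinely diverge is in the description of the domain of $h_*$: the paper takes the nucleus $j=\bigwedge\{w_{h(a)}\mid a\in A\}$ induced by the subalgebra $h(A)$, proves that $h:A\to B_j$ is a \emph{Heyting} homomorphism, and sets $\mathrm{dom}(h_*)=S_j=X_B\cap B_j$ (Remark~\ref{rem:S_j}: the meet-primes of the form $b\to h(a)$); you instead characterize the domain elementwise ($\{a\mid h(a)\le y\}$ has a meet-prime greatest element and $y$ is extremal in its fibre). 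These two descriptions do agree extensionally, and your observation that the fibre-truncation is forced by the strictness clause in Definition~\ref{def:kmor}(1) is the right intuition.

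However, as a proof the proposal has real gaps precisely at the points where the paper's nucleus machinery earns its keep. First, your naturality argument for $\alpha$ is mis-attributed: the nontrivial direction is that whenever $h(a)\le y$ with $y\in X_B$ there exists $z\in\mathrm{dom}(h_*)$ with $h(a)\le z\le y$, and this does \emph{not} follow from the maximality clause. It requires knowing that the meet-prime components of $h(a)$ lie in the domain — in particular that $\{a'\mid h(a')\le z\}$ has a greatest element for such $z$, which is unclear since $h$ need not preserve joins. The paper gets this from Proposition~\ref{prop:nucleus induced by subalgebra}(2b) (so that $h:A\to B_j$ has a right adjoint) together with Lemma~\ref{lem:components of fixpoints} and Lemma~\ref{lem:alpha_A}; your setup needs an equivalent substitute. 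The same issue underlies your ``standard back argument'' for Definition~\ref{def:kmor}(2): producing a prime filter of $B$ over $F_y$ with prescribed preimage is standard for Heyting homomorphisms, but here it must be carried out inside $B_j$ (or some surrogate), and one must check the truncated point still computes the right value. Second, and most seriously, functoriality of $(\;)_*$ is not ``a computation with preimages of prime filters'': with your definition one must prove that $y\in\mathrm{dom}((gh)_*)$ iff $y\in\mathrm{dom}(g_*)$ and $g_*(y)\in\mathrm{dom}(h_*)$ — e.g.\ that $\{b\mid g(b)\le y\}$ has a meet-prime greatest element whenever $\{a\mid gh(a)\le y\}$ does — and this is the longest and most delicate lemma in the paper's proof (the identity $S_l=g_*^{-1}(S_j)$). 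Until these three verifications are supplied, the proposal is a correct plan rather than a proof.
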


The object level of K\"ohler duality follows from Theorem~\ref{thm:haf}. To extend it to morphisms, it is convenient to first recall the dual characterization of nuclei from \cite{BG07}. While \cite{BG07} gives a dual characterization of nuclei on arbitrary Heyting algebras, we will restrict ourselves to the finite case.

For a poset $X$ and $S\subseteq X$ define $j_S$ on $\upsets(X)$ by
\[
j_S(U)=X \setminus \dn (S \setminus U).
\]
It is easy to check (see also \cite{BG07}) that $j_S$ is a nucleus on $\upsets(X)$. Conversely, suppose $A$ is a finite implicative semilattice and $j$ is a nucleus on $A$. Define a subset $S_j$ of $X_A$ by
\[
S_j=X_A \cap A_j.
\]
Thus, $S_j$ is the set of meet-primes of $A$ that are fixpoints of $j$. The next lemma, as well as Lemma~\ref{lem:alpha_A}, follow from \cite{BG07}, but it is easy to give their direct proofs.

\begin{lemma}\label{lem:S_j}
$X_{A_j}=S_j$.
\end{lemma}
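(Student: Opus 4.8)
The plan is to show the two inclusions $X_{A_j}\subseteq S_j$ and $S_j\subseteq X_{A_j}$ directly, using the description of $A_j$ as a total subalgebra of $A$ (Proposition \ref{prop:nucleus induced by subalgebra}(1)) together with elementary properties of meet-primes. The key auxiliary observation I would first record is that for any total subalgebra $B$ of a finite implicative semilattice $A$, the meet operation of $B$ coincides with that of $A$ (trivially, since $B$ is closed under $\meet$), while the implication $b_1\to_B b_2$ of $B$ equals $b_1\to b_2$ computed in $A$ — this is exactly where totality is used, and it guarantees that the order on $B$ is the restriction of the order on $A$, and that $b\leqslant c$ in $B$ iff $b\leqslant c$ in $A$. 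Hence ``$1$ of $B$'' is the $1$ of $A$, and a nonunit element is being compared in $B$ exactly as in $A$.

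For the inclusion $S_j\subseteq X_{A_j}$: let $m\in S_j$, so $m\in A_j$ and $m$ is meet-prime in $A$. Since $m\ne 1$ and $m\in A_j$, it is a nonunit element of $A_j$. If $b_1\meet b_2\leqslant m$ with $b_1,b_2\in A_j$, then since the meet in $A_j$ agrees with that in $A$ and $m$ is meet-prime in $A$, we get $b_1\leqslant m$ or $b_2\leqslant m$ in $A$, hence in $A_j$. So $m$ is meet-prime in $A_j$, i.e.\ $m\in X_{A_j}$.

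For the converse $X_{A_j}\subseteq S_j$: let $m\in X_{A_j}$, i.e.\ $m$ is meet-prime in $A_j$; in particular $m\in A_j$, so it suffices to show $m$ is meet-prime in $A$. Suppose $a\meet b\leqslant m$ with $a,b\in A$. Applying $j$ and using that $j$ preserves $\meet$, fixes $m$, and is monotone, we get $j(a)\meet j(b)=j(a\meet b)\leqslant j(m)=m$. Since $j(a),j(b)\in A_j$ and $m$ is meet-prime in $A_j$, we obtain $j(a)\leqslant m$ or $j(b)\leqslant m$, hence $a\leqslant j(a)\leqslant m$ or $b\leqslant j(b)\leqslant m$ by inflationarity. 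Thus $m$ is meet-prime in $A$, and $m\in X_A\cap A_j=S_j$.

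I expect the second inclusion — more precisely, the little trick of applying $j$ to $a\meet b\leqslant m$ — to be the only real content; everything else is bookkeeping about total subalgebras. One should double-check the degenerate case $m=1$ is excluded correctly on both sides: $1\notin X_{A_j}$ by definition of meet-prime, and $1\notin S_j$ since $S_j\subseteq X_A$ and meet-primes of $A$ are nonunit; the argument above never needs this but it is worth a remark so the two sets are compared on the same footing.
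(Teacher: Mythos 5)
Your proposal is correct and follows essentially the same route as the paper: the easy direction is that a meet-prime of $A$ lying in $A_j$ is meet-prime in $A_j$, and the substantive direction applies $j$ to $a\meet b\le m$ to reduce to meet-primeness in $A_j$, exactly as in the paper's argument. (The only stray remark is that totality of $A_j$ is not actually what makes $\to$ agree — any subalgebra closed under $\to$ inherits it — but nothing in the proof depends on this.)
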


\begin{proof}
Let $x \in A_j$. If $x$ is a meet-prime element of $A$, then it is clearly a meet-prime element of $A_j$. Conversely, suppose that $x$ is a meet-prime element of $A_j$ and $a \meet b \le x$ for some $a,b \in A$. Then $j(a) \meet j(b)=j(a \meet b) \le j(x)=x$. Since $j(a),j(b) \in A_j$, we have $j(a) \le x$ or $j(b) \le x$. Therefore, $a \le x$ or $b \le x$, and so $x$ is a meet-prime element of $A$. Thus, $X_{A_j}=S_j$.
\end{proof}

For $a \in A$ we call the minimal elements of $\{ x \in X_A \mid a \le x \}$ the \textit{meet-prime components} of $a$.
Since $A$ is finite, $a= \bigwedge \{ x \in X_A \mid a \le x \}$, so $a$ is the meet of its meet-prime components.

\begin{lemma}\label{lem:components of fixpoints}
Let $A$ be a finite implicative semilattice and $j$ a nucleus on $A$. If $a \in A_j$, then the meet-prime components of $a$ are also in $A_j$.
\end{lemma}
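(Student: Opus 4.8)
The plan is to combine two facts about a finite implicative semilattice $A$: that every element equals the meet of its meet-prime components (recorded just before the lemma), and that $j$, being a nucleus, preserves finite meets. Together with the hypothesis $j(a)=a$ these should pin down each meet-prime component of $a$ as a fixpoint of $j$.

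First I would handle the trivial case $a=1$ separately (it has no meet-prime components, so there is nothing to prove) and assume $a\neq 1$. Let $M$ be the set of meet-prime components of $a$, i.e.\ the minimal elements of $\{x\in X_A\mid a\le x\}$; since $A$ is finite, $M$ is a nonempty finite set and $a=\bigwedge M$. Applying $j$ and using that it preserves finite meets together with $a\in A_j$ yields
\[
a = j(a) = j\Big(\bigwedge_{m\in M} m\Big) = \bigwedge_{m\in M} j(m).
\]
Now fix $m_0\in M$. Then $\bigwedge_{m\in M} j(m)=a\le m_0$, and since $m_0$ is meet-prime and $M$ is finite, iterating the definition of meet-prime gives some $m_1\in M$ with $j(m_1)\le m_0$. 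As $j$ is inflationary, $m_1\le j(m_1)\le m_0$; since $m_0$ and $m_1$ both belong to $\{x\in X_A\mid a\le x\}$ and $m_0$ is minimal in that set, this forces $m_1=m_0$. Hence $j(m_0)=j(m_1)\le m_0\le j(m_0)$, so $j(m_0)=m_0$, i.e.\ $m_0\in A_j$.

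I do not anticipate a real obstacle here. The two points worth stating carefully are: the passage from binary meet-primeness to a finite meet (which is precisely where finiteness of $A$, hence of $M$, is used), and the direction of the minimality argument — $m_1\le m_0$ with $m_0$ minimal forces $m_1=m_0$ because minimality forbids anything strictly below $m_0$, not above it. Everything else is a direct computation.
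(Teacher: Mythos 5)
Your proposal is correct and follows essentially the same route as the paper's proof: write $a$ as the meet of its meet-prime components, use that $j$ preserves finite meets to get $\bigwedge_{m\in M} j(m)=a\le m_0$, invoke meet-primeness to extract some $m_1$ with $j(m_1)\le m_0$, and then use inflationarity of $j$ together with minimality of $m_0$ to force $m_1=m_0$ and $j(m_0)=m_0$. The only (harmless) additions are your explicit treatment of the case $a=1$ and the remark on extending binary meet-primeness to finite meets.
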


\begin{proof}
We have $a=x_1 \meet \cdots \meet x_n$ where $x_1,\ldots,x_n$ are the meet-prime components of $a$. Therefore,
\[
j(x_1) \meet \cdots \meet j(x_n)=j(x_1 \meet \cdots \meet x_n)=j(a)=a \le x_1.
\]
Since $x_1$ is meet-prime, $j(x_i) \le x_1$ for some $i$. Thus, $x_i \le j(x_i) \le x_1$. By minimality of $x_1$, we have that $x_i=j(x_i)=x_1$. In particular, $x_1 \in A_j$. A similar argument yields that $x_i \in A_j$ for each $i$.
\end{proof}

\begin{lemma} \label{lem:alpha_A}
$\alpha_A(ja)=j_{S_j}\alpha_A(a)$.
\end{lemma}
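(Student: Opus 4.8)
The plan is to unwind both sides to subsets of $X_A$ and show they coincide. Recall $\alpha_A(a)=\{x\in X_A\mid a\nleq x\}$, and for $S=S_j$ and an upset $U$ of $X_A$ we have $j_S(U)=X_A\setminus\dn(S\setminus U)$, where $\dn$ is taken with respect to the order $\sqsubseteq$ on $X_A$. Since $\sqsubseteq$ is the reverse of $\leq$, the downset $\dn x$ in $(X_A,\sqsubseteq)$ is $\{y\in X_A\mid x\leq y\}$. So I would first rewrite
\[
j_{S_j}\alpha_A(a)=X_A\setminus\dn\bigl(S_j\setminus\alpha_A(a)\bigr)=X_A\setminus\dn\{x\in S_j\mid a\leq x\},
\]
and then observe that $\dn\{x\in S_j\mid a\leq x\}=\{y\in X_A\mid \exists x\in S_j,\ a\leq x\ \text{and}\ x\leq y\}$. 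Hence a meet-prime $y$ lies in $j_{S_j}\alpha_A(a)$ iff there is \emph{no} $x\in X_A\cap A_j$ with $a\leq x\leq y$; equivalently, iff $j(a)\nleq y$, provided one can show that $j(a)\leq y$ is witnessed by such an $x$. Dually, $y\in\alpha_A(ja)$ iff $j(a)\nleq y$. So the lemma reduces to the claim: for $y\in X_A$, $j(a)\leq y$ if and only if there exists $x\in X_A\cap A_j$ with $a\leq x\leq y$.

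For the "if" direction this is immediate: if $a\leq x$ and $x\in A_j$, then $j(a)\leq j(x)=x\leq y$. For the "only if" direction, suppose $j(a)\leq y$. Here I would invoke Lemma~\ref{lem:components of fixpoints}: since $j(a)\in A_j$, its meet-prime components $x_1,\dots,x_n$ all lie in $A_j$, and $j(a)=x_1\meet\cdots\meet x_n\leq y$. As $y$ is meet-prime, some $x_i\leq y$; and since $a\leq j(a)\leq x_i$, this $x_i$ is the desired element of $X_A\cap A_j$ with $a\leq x_i\leq y$. This completes the verification of the claim, and hence the two subsets of $X_A$ agree.

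The only real subtlety, and the step I would be most careful about, is keeping the two orders straight: $\dn$ in the definition of $j_S$ refers to the poset order $\sqsubseteq$ on $X_A$, which is the \emph{opposite} of the algebra order $\leq$ restricted to meet-primes, so "$\dn$ in $(X_A,\sqsubseteq)$" translates to "$\leq$-up-closure in $A$". Once this translation is made explicit, everything else is a direct application of Lemma~\ref{lem:components of fixpoints} together with the finiteness of $A$ (which guarantees that $j(a)$ is the meet of its finitely many meet-prime components). No appeal to Lemma~\ref{lem:S_j} is needed beyond the defining equality $S_j=X_A\cap A_j$.
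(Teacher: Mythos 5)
Your proposal is correct and follows essentially the same route as the paper: both reduce the identity to the equivalence ``$j(a)\le y$ iff there exists $x\in S_j$ with $a\le x\le y$'', prove the easy direction from $x=j(x)$, and obtain the other direction from Lemma~\ref{lem:components of fixpoints} together with meet-primality of $y$. Your explicit bookkeeping of the order reversal between $\le$ and $\sqsubseteq$ is the only (welcome) extra detail.
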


\begin{proof}
We recall that the isomorphism $\alpha_A:A \to \upsets(X_A)$ is given by $\alpha_A(a)=\{ x \in X_A \mid a \nleq x \}$ and that the order $\sqsubseteq$ on $X_A$ is the dual of $\le$. We have $x\notin\alpha_A(ja)$ iff $j(a)\le x$ and $x\notin j_{S_j}\alpha_A(a)$ iff there is $y\in S_j$ with $a\le y$ and $y \le x$. Since $y\in S_j$ implies that $j(y)=y$, the existence of such $y$ implies that $j(a)\le x$. Conversely, suppose that $j(a)\le x$. Then there is a meet-prime component
$y$ of $j(a)$ such that $y\le x$
By Lemma~\ref{lem:components of fixpoints}, $j(y)=y$. Thus, $y \in S_j$ and $a \le y \le x$.
\end{proof}

As a result we obtain the following representation of finite nuclear implicative semilattices (which is also a consequence of \cite{BG07}).

\begin{theorem} \label{thm: (A,j) iso to (A,j)_*^*}
Let $(A,j)$ be a finite nuclear implicative semilattice. Then $(A,j)$ is isomorphic to $(\upsets(X_A),j_{S_j})$.
\end{theorem}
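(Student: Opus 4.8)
The plan is to observe that the Heyting-algebra isomorphism $\alpha_A\colon A\to\upsets(X_A)$ coming from the duality for finite Heyting algebras is already an isomorphism of nuclear implicative semilattices once $\upsets(X_A)$ is equipped with the nucleus $j_{S_j}$; almost all of the work has been done in the preceding lemmas. First I would recall that by Theorem~\ref{thm:haf} (more precisely, by the object level of that duality, with $\alpha_A$ as in~\eqref{eq:epsilon}) the map $\alpha_A$ is a bijection preserving $\meet$, $\to$, and $1$, so it is an isomorphism in $\isf$. Next, $j_{S_j}$ is a nucleus on $\upsets(X_A)$, as noted right after the definition of $j_S$, so $(\upsets(X_A),j_{S_j})$ is indeed a finite nuclear implicative semilattice.

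It then remains only to check that $\alpha_A$ is compatible with the two nuclei, and this is exactly the content of Lemma~\ref{lem:alpha_A}, which gives $\alpha_A(ja)=j_{S_j}\alpha_A(a)$ for all $a\in A$. Combined with the previous paragraph, this says that $\alpha_A$ is a nuclear homomorphism from $(A,j)$ to $(\upsets(X_A),j_{S_j})$. Since $\alpha_A$ is bijective, its set-theoretic inverse $\alpha_A^{-1}$ automatically preserves $\meet$, $\to$, and $1$; and substituting $a=\alpha_A^{-1}(b)$ in the identity of Lemma~\ref{lem:alpha_A} yields $j\alpha_A^{-1}(b)=\alpha_A^{-1}(j_{S_j}(b))$, so $\alpha_A^{-1}$ preserves the nucleus as well. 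Hence $\alpha_A$ is an isomorphism in $\nis$, which is precisely the assertion.

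There is essentially no remaining obstacle: the entire substance of the statement has been distilled into Lemma~\ref{lem:alpha_A} (and, through it, Lemmas~\ref{lem:S_j} and~\ref{lem:components of fixpoints}). The one point worth a word of care is the choice $S_j=X_A\cap A_j$, i.e.\ that this is the correct subset of $X_A$ inducing the nucleus dual to $j$; but that is exactly what Lemma~\ref{lem:alpha_A} certifies. Should one prefer a more self-contained route, one could instead transport $j$ along $\alpha_A$ to a nucleus $j'=\alpha_A\circ j\circ\alpha_A^{-1}$ on $\upsets(X_A)$, verify directly that every nucleus on $\upsets(X_A)$ has the form $j_S$ for some $S\subseteq X_A$, and identify the relevant $S$ with $S_j$ via Lemma~\ref{lem:S_j}; the argument through Lemma~\ref{lem:alpha_A} is the shorter one.
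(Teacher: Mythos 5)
Your proof is correct and takes exactly the route the paper intends: the theorem is stated in the paper as an immediate consequence of Lemma~\ref{lem:alpha_A} (together with the fact that $\alpha_A$ is already an isomorphism in $\isf$), and you have simply written out those details, including the routine check that $\alpha_A^{-1}$ also preserves the nucleus. No issues.
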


We are ready to define contravariant functors $(\;)^*: \pfk \to \isf$ and $(\;)_*: \isf \to \pfk$ which yield K\"ohler duality.

We define $(\;)^*: \pfk \to \isf$ on objects by sending each finite poset $X$ to $X^*=\upsets(X)$. If $f: X \to Y$ is a \kmor{}, let $f^*: \upsets(Y) \to \upsets(X)$ be given by
\[
f^*(V)=X \setminus \dn f^{-1}(Y \setminus V).
\]
Using the definition of \kmor{}s, it is straightforward to show that $f^*$ is an implicative semilattice homomorphism and that $(\;)^*$ reverses the order of compositions. It is also clear that $(\;)^*$ preserves identity \kmor{}s. Thus, $(\;)^*: \pfk \to \isf$ is a well-defined contravariant functor.

We define $(\;)_*: \isf \to \pfk $ on objects by sending each finite implicative semilattice $A$ to $A_*=(X_A,\sqsubseteq)$. Let $h: A \to B$ be an implicative semilattice homomorphism. Then $h(A)$ is a subalgebra of $B$, so by Proposition~\ref{prop:nucleus induced by subalgebra}(2a), $h(A)$ gives rise to the nucleus $j=\bigwedge\{w_{h(a)}\mid a\in A\}$ on $B$. Let $S_j=X_B \cap B_j$.

\begin{remark}\label{rem:S_j}
We have
\begin{align*}
S_j &=\{ y \in X_B \mid y= \bigwedge_{a \in A} w_{h(a)} (y) \} =\{ y \in X_B \mid y= \bigwedge_{a \in A} (y \to h(a)) \to h(a) \} \\
&=\{ y \in X_B \mid y= (y \to h(a)) \to h(a) \mbox{ for some } a \in A \}\\
&=\{ y \in X_B \mid y= b \to h(a)  \mbox{ for some } b \in B, \, a \in A  \}
\end{align*}
\end{remark}

\begin{lemma}
$h:A\to B_j$ is a Heyting algebra homomorphism.
\end{lemma}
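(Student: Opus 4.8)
The plan is to factor $h$ as $A \xrightarrow{h'} h(A) \hookrightarrow B_j$, where $h'$ is the corestriction of $h$ to its image and the second map is the inclusion, and to show that each of the two factors is a Heyting algebra homomorphism (recall that $A$ and $B_j$, being finite implicative semilattices, are Heyting algebras by Remark~\ref{rem:bounds}). First I would record two easy facts. One, $h(A)\subseteq B_j$: for each $a$ we have $h(a)\le j(h(a))\le w_{h(a)}(h(a))=(h(a)\to h(a))\to h(a)=h(a)$, so $h(a)$ is a fixpoint of $j$. Two, $h(A)$ is a finite subalgebra of $B$, so its meet, implication and top are simply the restrictions of those of $B$; thus the implicative semilattice reduct of $h(A)$ is the one it inherits from $B$.

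For the inclusion $h(A)\hookrightarrow B_j$, I would apply Proposition~\ref{prop:nucleus induced by subalgebra}(2b) to the subalgebra $h(A)$ of $B$ and the associated nucleus $j=\bigwedge\{w_{h(a)}\mid a\in A\}$: it tells us that $h(A)$ is a Heyting subalgebra of $(B_j,\meet,\join_j,\to,0_j)$, where $x\join_j y=j(x\join y)$ and $0_j=j(0)$. In particular the inclusion $h(A)\hookrightarrow B_j$ preserves $\meet,\join,\to,0,1$, provided we agree that the Heyting structure on $h(A)$ is the one described in Proposition~\ref{prop:nucleus induced by subalgebra}(2b) --- which is legitimate because that structure has the same implicative semilattice reduct as the one inherited from $B$, and a finite implicative semilattice has a unique Heyting expansion (joins and the bottom are definable from the order).

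For the surjection $h':A\to h(A)$, I would use the congruence--filter correspondence for implicative semilattices: $\ker h'$ is the congruence determined by the filter $F=(h')^{-1}(1)$, so $h(A)\cong A/F$ as implicative semilattices, hence --- again by uniqueness of the Heyting expansion --- as Heyting algebras. In $A/F$ one has $[a]\join[b]=[a\join b]$, because $a\to c,\,b\to c\in F$ forces $(a\join b)\to c=(a\to c)\meet(b\to c)\in F$; and $[0]$ is the bottom. Hence the quotient map $A\to A/F$, and therefore $h'$, preserves $\join$ and $0$, while it preserves $\meet,\to,1$ because it is an implicative semilattice homomorphism. Composing the two factors then gives that $h:A\to B_j$ is a Heyting algebra homomorphism.

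The only real obstacle --- and it is a short one --- is matching up the two Heyting structures carried by $h(A)$: the one coming from Proposition~\ref{prop:nucleus induced by subalgebra}(2b) (as a Heyting subalgebra of $B_j$) and the one coming from the identification $h(A)\cong A/F$ (as a homomorphic image of the Heyting algebra $A$). Both have the implicative semilattice reduct inherited from $B$, so they coincide by uniqueness of the Heyting expansion of a finite implicative semilattice, and after that the argument is just the routine bookkeeping of composing homomorphisms.
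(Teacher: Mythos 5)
Your proof is correct and follows essentially the same route as the paper: the paper also factors $h$ as the onto homomorphism $A\to h(A)$ (a Heyting homomorphism because it is determined by a filter) followed by the inclusion $h(A)\hookrightarrow B_j$ (a Heyting homomorphism by Proposition~\ref{prop:nucleus induced by subalgebra}(2b)). Your additional remarks --- that $h(A)\subseteq B_j$ and that the two Heyting structures on $h(A)$ agree by uniqueness of the Heyting expansion of a finite implicative semilattice --- just make explicit what the paper leaves implicit.
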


\begin{proof}
The map $h:A \to B_j$ is the composition of the onto homomorphism $h:A \to h(A)$ and the inclusion $h(A) \hookrightarrow B_j$. Since $h:A \to h(A)$ is onto, it is determined by a filter, hence is a Heyting algebra homomorphism. That $h(A) \hookrightarrow B_j$ is a Heyting algebra homomorphism follows from Proposition~\ref{prop:nucleus induced by subalgebra}(2b). Thus, $h:A \to B_j$ is a Heyting algebra homomorphism.
\end{proof}

Since $h:A \to B_j$ is a Heyting algebra homomorphism, it has a right adjoint $h_*:B_j \to A$ given by
\[
h_*(y)=\bigvee \{ a \in A \mid h(a) \le y \}.
\]
Therefore, $h_*$ maps $X_{B_j}$ to $X_A$. By Lemma~\ref{lem:S_j}, $X_{B_j}=S_j$. Thus, $h_*$ restricts to a map $h_*:S_j \to X_A$.
As a result, we obtain a partial map $h_*:X_B \to X_A$ with domain $S_j$.

\begin{lemma}\label{lem:kohler morph}
The partial map $h_*:X_B \to X_A$ is a \kmor{}.
\end{lemma}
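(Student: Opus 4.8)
The plan is to reduce everything to the already-established duality for finite Heyting algebras. Since $h\colon A\to B_j$ is a Heyting algebra homomorphism and $X_{B_j}=S_j$ by Lemma~\ref{lem:S_j}, the restriction $h_*|_{S_j}$ is exactly the p-morphism dual to $h\colon A\to B_j$ under Theorem~\ref{thm:haf}; in particular it is a p-morphism $(S_j,\sqsubseteq)\to(X_A,\sqsubseteq)$, where the order on $S_j$ is the one inherited from $X_B$ (which coincides with the order coming from $X_{B_j}$, both being the dual of the restriction of $\le$). It then remains to verify the two clauses of Definition~\ref{def:kmor} for the partial map $h_*\colon X_B\to X_A$ with domain $D=S_j$. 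Clause~(2) is immediate: by Remark~\ref{rem:p-morphism}(1) it suffices to check its $\le$-version, and that is precisely the ``back'' condition of the p-morphism $h_*|_{S_j}$.

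Clause~(1) is the heart of the matter. Unwinding the definition of $\sqsubseteq$, I must show that $x,x'\in S_j$ with $x'<x$ in $B$ forces $h_*(x')<h_*(x)$ in $A$; since $h_*$ is a right adjoint it is monotone, so the only issue is strictness. By Remark~\ref{rem:S_j}, choose $a\in A$ with $x=(x\to h(a))\to h(a)$, and set $c=h(a)$, so that $c\le x$. The plan is to prove the key claim that $c\nleq x'$. Granting it, the adjunction yields $a\le h_*(x)$ (from $h(a)=c\le x$) but $a\nleq h_*(x')$ (from $c\nleq x'$), and together with $h_*(x')\le h_*(x)$ this gives the strict inequality $h_*(x')<h_*(x)$, as required.

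To prove the claim I would argue by contradiction, assuming $c\le x'$. The idea is to pass to the Boolean algebra $\bar B=\{b\in B\mid (b\to c)\to c=b\}$ of elements ``regular relative to $c$'', which has bottom $c$, top $1$, and double-negation elimination, and which contains $x$ by the choice of $a$; one also uses that, by Proposition~\ref{prop:nucleus induced by subalgebra}(1), $B_j$ is a \emph{total} subalgebra of $B$, so that the auxiliary elements $x'\to c$, $z:=x\wedge(x'\to c)$ and $z\to c$ all lie in $B_j$. One then splits into the cases $z=c$ and $z>c$. In each case, applying meet-primeness of $x'$ in $B_j$ (together with $x\nleq x'$, which holds since $x'<x$) collapses one of these auxiliary elements down to $c$, which forces $x=(\,\cdots\,)\to c=1$ — contradicting $x\in X_B$. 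This establishes the claim, and with it clause~(1), completing the proof.

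The main obstacle is exactly this key claim $c\nleq x'$, and within it the case $z>c$: there one must first descend to the element $(x'\to c)\to c$, which sits strictly between $x'$ and $x$, recognize via meet-primeness that it must in fact equal $x'$ (so that $x'$, too, is regular relative to $c$ and hence lives in $\bar B$), and then run a second meet-primeness argument inside the Boolean algebra $\bar B$ to conclude $x=1$. Keeping track of the relative pseudocomplements and, especially, of the fact that each auxiliary element belongs to the total subalgebra $B_j$ (so that meet-primeness of $x'$ can legitimately be applied to it) is the delicate bookkeeping.
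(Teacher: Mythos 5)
Your proposal is correct and follows essentially the same route as the paper: reduce to the Heyting p-morphism via Theorem~\ref{thm:haf} and Remark~\ref{rem:p-morphism}(1), then establish strictness using the representation of elements of $S_j$ from Remark~\ref{rem:S_j} together with meet-primeness of the smaller element; your key claim $c\nleq x'$ is precisely the contrapositive of the step where the paper, assuming $h_*(x)=h_*(y)$, transfers $h(a)\le y$ down to $h(a)\le x$. The only real difference is that your case analysis through the relative Booleanization $\bar B$ is an unnecessary detour: assuming $c\le x'$, one has $x\wedge(x\to c)\le c\le x'$, so meet-primeness of $x'$ together with $x\nleq x'$ gives $x\to c\le x'\le x$, whence $x\to c\le c$ and $x=(x\to c)\to c=1$, contradicting $x\in X_B$.
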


\begin{proof}
It follows from Theorem~\ref{thm:haf} that $h_*:S_j \to X_A$ is a p-morphism.
Therefore, by Remark~\ref{rem:p-morphism}(1), it is sufficient to show that if $x,y \in S_j$ with $x < y$, then $h_*(x) < h_*(y)$. Since $x<y$ implies $x\le y$ and $h_*:S_j\to X_A$ is a p-morphism, we have $h_*(x) \le h_*(y)$. Suppose that $h_*(x) = h_*(y)$. Since $y \in S_j$,
by Remark~\ref{rem:S_j},
$y=b \to h(a)$ for some $b \in B$ and $a\in A$. Thus, $h(a) \le b \to h(a)=y$.
Because $h_*$ is right adjoint to $h$,
this implies $a \le h_*(y)=h_*(x)$, so $h(a) \le x$. From $y=b \to h(a)$ it follows that $y \meet b \le h(a) \le x$. Since $x$ is meet-prime and $x < y$, we have $b \le x$,
so $b \le y$.
Therefore,
\[
1= b \to y=b \to (b \to h(a))=b \to h(a)=y,
\]
which is a contradiction since $y$ is meet-prime. Thus, $h_*(x) < h_*(y)$.
\end{proof}

Consequently, we can define $(\;)_*$ on morphisms by sending $h$ to $h_*$.

\begin{lemma}
$(\;)_*$ is a contravariant functor.
\end{lemma}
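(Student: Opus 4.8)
The plan is to verify the two defining properties of a contravariant functor: $(\;)_*$ sends identity homomorphisms to identity \kmor{}s, and $(\;)_*$ reverses composition.

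Identities are immediate. If $h=\mathrm{id}_A:A\to A$, then $h(A)=A$ is trivially a total subalgebra of $A$, so by Proposition~\ref{prop:nucleus induced by subalgebra}(2c) the induced nucleus $j=\bigwedge\{w_a\mid a\in A\}$ satisfies $A_j=A$; thus $j=\mathrm{id}_A$ and $S_j=X_A\cap A_j=X_A$. Hence $(\mathrm{id}_A)_*$ is total, and for $y\in X_A$ we have $(\mathrm{id}_A)_*(y)=\bigvee\{a\in A\mid a\le y\}=y$. So $(\mathrm{id}_A)_*$ is the total identity function on $X_A$, which is the identity morphism of $\pfk$.

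For composition, let $h:A\to B$ and $g:B\to C$, and write $j$, $j'$, $j''$ for the nuclei induced on $B$, $C$, $C$ by the subalgebras $h(A)\subseteq B$, $g(B)\subseteq C$, $g(h(A))\subseteq C$, so that $h_*$, $g_*$, $(gh)_*$ have domains $S_j$, $S_{j'}$, $S_{j''}$. I would split $(gh)_*=h_*g_*$ into agreement of values and agreement of domains. Since $g(h(A))\subseteq g(B)$ gives $j'\le j''$ pointwise, every $j''$-fixpoint is a $j'$-fixpoint, so $C_{j''}\subseteq C_{j'}$ and $S_{j''}\subseteq S_{j'}$. Then for $z\in S_{j''}$, as $z$ and $g(h(a))$ both lie in $C_{j'}$, the adjunction $g\dashv g_*$ between $B$ and $C_{j'}$ yields
\[
(gh)_*(z)=\bigvee\{a\in A\mid g(h(a))\le z\}=\bigvee\{a\in A\mid h(a)\le g_*(z)\}=h_*(g_*(z)),
\]
where the last equality is the defining formula for $h_*$ and is legitimate exactly when $g_*(z)\in S_j$, i.e.\ when $z$ lies in the domain of $h_*g_*$.

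It therefore remains to prove $S_{j''}=g_*^{-1}(S_j)$ (the preimage taken inside $S_{j'}$), and this is the step I expect to be the main obstacle. By Remark~\ref{rem:S_j}, $z\in S_{j''}$ iff $w_{g(h(a))}(z)=z$ for some $a\in A$, and $y\in S_j$ iff $w_{h(a)}(y)=y$ for some $a\in A$. For the inclusion $\subseteq$, if $w_{g(h(a))}(z)=z$ then $g(h(a))\le z$, hence $h(a)\le g_*(z)$, and one shows that $g_*(z)$ is $w_{h(a)}$-closed using that $g$ preserves $\to$ together with the counit $g(g_*(z))\le z$. For $\supseteq$ the subtlety is that the index $a$ witnessing $g_*(z)\in S_j$ need not directly witness $z\in S_{j''}$, so rather than chasing a single index one relates $j''(z)=\bigwedge_{a\in A}w_{g(h(a))}(z)$ to $j(g_*(z))=g_*(z)$ through the meet-preserving right adjoint $g_*$ and the counit, and uses that $z$ is meet-prime in $C$ to collapse the resulting meet and conclude $j''(z)=z$, i.e.\ $z\in S_{j''}$. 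Once the domains coincide, the displayed computation gives $(gh)_*=h_*g_*$, completing the proof that $(\;)_*$ is a contravariant functor. (Alternatively, one could prove naturality of the isomorphisms $\alpha_A$ with respect to the maps $h_*$ and deduce functoriality of $(\;)_*$ formally from that of $(\;)^*$, which merely moves the work into the naturality square.)
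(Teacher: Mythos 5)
Your overall architecture matches the paper's proof exactly: handle identities, show the domains of $(gh)_*$ and $h_*g_*$ coincide by proving $S_{j''}=g_*^{-1}(S_j)$, then compare values via the adjunctions. The identity case, the value computation, and the inclusion $S_{j''}\subseteq g_*^{-1}(S_j)$ (via applying $g$ to $w_{h(a)}(g_*(z))$ and using the counit) are all correct and are essentially what the paper does.

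The gap is the reverse inclusion $g_*^{-1}(S_j)\subseteq S_{j''}$, precisely the step you flag as the main obstacle. Your sketch --- push $j(g_*(z))=g_*(z)$ through $g_*$ and the counit and ``use meet-primeness of $z$ to collapse the meet'' --- does not close as stated. What the counit actually yields is that $gg_*(z)$ is a $j''$-fixpoint: indeed $gg_*(z)=g\bigl(\bigwedge_a w_{h(a)}(g_*(z))\bigr)=\bigwedge_a w_{gh(a)}(gg_*(z))$. But from $gg_*(z)\le z$ and monotonicity of $w_{gh(a)}$ you only get $w_{gh(a)}(gg_*(z))\le w_{gh(a)}(z)$, i.e.\ a \emph{lower} bound on $j''(z)$, whereas you need $j''(z)\le z$; and collapsing $\bigwedge_a w_{gh(a)}(gg_*(z))\le z$ by meet-primeness produces a single term that again bounds $w_{gh(a)}(z)$ from \emph{below}, not above. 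A meet-prime lying above a $j''$-fixpoint need not itself be a fixpoint, so something more is needed. There are two ways to finish. The paper's route is concrete: since $z\in S_{j'}$, Remark~\ref{rem:S_j} gives $z=c\to g(b)$, and since $g_*(z)\in S_j$ it gives $g_*(z)=b'\to h(a)$; one then checks $z=c\to gg_*(z)$ and computes $z=(c\meet g(b'))\to gh(a)$, which by the same remark puts $z$ in $S_{j''}$. Alternatively, your line of attack can be salvaged by adding the missing minimality input: $z$ is not merely a meet-prime above $gg_*(z)$ but a meet-prime \emph{component} of it (this is Lemma~\ref{lem:meet-prime comp of hh_*(x)}, whose proof is independent of functoriality), and meet-prime components of fixpoints are fixpoints by Lemma~\ref{lem:components of fixpoints}. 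Without one of these two ingredients the $\supseteq$ direction is not proved.
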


\begin{proof}
It is easy to see that
$(\;)_*$ preserves identity homomorphisms. Let $h:A \to B$ and $g:B \to C$ be homomorphisms between finite implicative semilattices. To see that $(gh)_*=h_* g_*$ let $j=\bigwedge\{w_{h(a)}\mid a\in A\}$ be the nucleus on $B$ corresponding to the subalgebra $h(A)$, and let $k=\bigwedge\{w_{g(b)}\mid b\in B\}$ and $l=\bigwedge\{w_{g(h(a))}\mid a\in A\}$ be the nuclei on $C$  corresponding to the subalgebras $g(B)$ and $g(h(A))$. The domain of $h_*$ is then $S_j\subseteq X_B$ and the domains of $g_*,(gh)_*$ are $S_k, S_l\subseteq X_C$. Since
$gh(A) \subseteq g(B)$,
we have $k \le l$, which implies that $A_l \subseteq A_k$ and $S_l \subseteq S_k$.

We show that $S_l=g_*^{-1}(S_j)$, yielding that the domain of $(gh)_*$ coincides with the domain of $h_* g_*$. Let $x \in S_l$.
By Remark~\ref{rem:S_j},
$x=(x \to gh(a)) \to gh(a)$ for some $a \in A$. Since $S_l \subseteq S_k$, we have $x \in S_k \subseteq C_k$. Therefore, $gg_* (x) \le x$ because $g_*$
is right adjoint to
$g:B \to C_k$. Thus,
\begin{align*}
g((g_*(x) \to h(a)) \to h(a)) & =(gg_*(x)\to gh(a)) \to gh(a)\\
& \le (x\to gh(a)) \to gh(a)=x.
\end{align*}
The above inequality implies $(g_*(x) \to h(a)) \to h(a) \le g_*(x)$ which gives $g_*(x)=(g_*(x) \to h(a)) \to h(a)$. Therefore, $g_*(x) \in S_j$, and so  $S_l \subseteq g_*^{-1}(S_j)$.

To show the other inclusion, let $x \in g_*^{-1}(S_j)$. Then $x \in S_k$ and so $x=c \to g(b)$ for some $c \in C$ and $b \in B$. Also, $g_*(x)=b' \to h(a)$ for some $b' \in B$ and $a \in A$ because $g_*(x) \in S_j$. Since $x \in C_k$, we have $gg_*(x) \le x$. So
\begin{align*}
c \to gg_*(x) \le c \to x=c \to (c \to g(b))=c \to g(b)=x.
\end{align*}
Since $g(b) \le c \to g(b)=x$ and $x \in C_k$, we have $b \le g_*(x)$ and so $g(b) \le gg_*(x)$. Therefore,
\begin{align*}
x=c \to g(b) \le c \to gg_*(x).
\end{align*}
Thus, $x=c \to gg_*(x)$ which gives
\begin{align*}
x &=c \to gg_*(x)=c \to g(b' \to h(a))\\
&=c \to (g(b') \to gh(a))=(c \meet g(b')) \to gh(a).
\end{align*}
Since $S_l$ is the set of the meet-primes of the form
$c' \to gh(a)$ for some $c' \in C$
and $a \in A$, we have that $x \in S_l$. Consequently, $S_l=g_*^{-1}(S_j)$.

It remains to show that if $x \in S_l=g_*^{-1}(S_j)$, then $(gh)_*(x)=h_*g_*(x)$. Let $a \in A$. Since $x \in S_l$, we have
\[
a \le (gh)_*(x) \mbox{ iff }  gh(a) \le x.
\]
From $x \in S_l \subseteq S_k$ and $g_*(x) \in S_j$ it follows that
\[
a \le h_*g_*(x) \mbox{ iff } h(a) \le g_*(x)  \mbox{ iff }  gh(a) \le x.
\]
This implies that $a \le (gh)_*(x)$ iff $a \le h_*g_*(x)$ for each $a \in A$. Thus, $(gh)_*(x)=h_*g_*(x)$.
\end{proof}

Finally, since $\alpha_A$ and $\varepsilon_X$ are natural isomorphisms, the functors $(\;)_*: \isf \to \pfk$ and $(\;)^*: \pfk \to \isf$ yield a dual equivalence of $\isf$ and $\pfk$, concluding the proof of K\"ohler duality.

\section{Duality for finite nuclear implicative semilattices}\label{sec: finite duality nuclear implicative semilattices}

In this section we generalize K\"ohler duality to the setting of nuclear implicative semilattices. Let $(X,\le)$ be a finite poset. As we pointed out in Section~\ref{sec: finite duality implicative semilattices}, each subset $S$ of $X$ gives rise to a nucleus on $\upsets(X)$ given by $j_S(U)= X \setminus \dn (S \setminus U)$. Conversely, to each nucleus $j$ on a finite implicative semilattice $A$ there corresponds a subset of $X_A$ given by $S_j=X_A \cap A_j$. By Theorem~\ref{thm: (A,j) iso to (A,j)_*^*}, $\alpha_A$ is a nuclear implicative semilattice isomorphism between $(A,j)$ and $(\upsets(X_A), j_{S_j})$. We will extend this representation result to a full duality.

\begin{definition}
Let $\nisf$ be the full subcategory of $\nis$ consisting of finite nuclear implicative semilattices.
\end{definition}

\begin{definition}
We call a pair $(X,S)$ an \textit{S-poset} if $(X,\le)$ is a poset and $S$ is a subset of $X$.
\end{definition}

\begin{lemma} \label{prop:S-morph}
Let $(X,S)$ and $(Y,T)$ be two finite S-posets, $f:X \to Y$ a \kmor{} with domain $D \subseteq X$, and $f^*:\upsets(Y) \to \upsets(X)$ its dual implicative semilattice homomorphism. Then $f^*$ is a nuclear homomorphism iff for all $x \in X$ we have
\begin{equation*}\tag{$\ast$}
\up (f(\up x) \cap T)=f(\up(\up x \cap S)).
\end{equation*}
\end{lemma}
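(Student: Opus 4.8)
The plan is to unwind the statement ``$f^*$ is a nuclear homomorphism'' into a pointwise condition on $X$ and then recognize that condition as $(\ast)$. Throughout, for the partial \kmor{} $f$ with domain $D$ and any $U\subseteq X$ I write $f(U)$ for $f(U\cap D)$; recall from Remark~\ref{rem:p-morphism}(2) that $f(\up x)$ is then an upset of $Y$ for every $x\in X$. Note also that $(\upsets(X),j_S)$ and $(\upsets(Y),j_T)$ are nuclear implicative semilattices and $f^*\colon\upsets(Y)\to\upsets(X)$ is already an implicative semilattice homomorphism, so ``$f^*$ is a nuclear homomorphism'' means exactly that $f^*(j_T(V))=j_S(f^*(V))$ for every $V\in\upsets(Y)$.

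First I would record two elementary reformulations. Unwinding $f^*(V)=X\setminus\dn f^{-1}(Y\setminus V)$ shows that, for $V\in\upsets(Y)$,
\[
x\in f^*(V)\iff f(\up x)\subseteq V,
\]
and unwinding $j_S(U)=X\setminus\dn(S\setminus U)$ shows that, for $U\in\upsets(X)$,
\[
x\in j_S(U)\iff \up x\cap S\subseteq U,
\]
and likewise $w\in j_T(V)\iff \up w\cap T\subseteq V$ for $V\in\upsets(Y)$. Using these, together with the facts that $f(\up x)$ is an upset, that $V$ is an upset, and that images of unions are unions of images, I would compute both sides pointwise:
\begin{align*}
x\in f^*(j_T(V)) &\iff f(\up x)\subseteq j_T(V)\\
&\iff \Bigl(\bigcup_{w\in f(\up x)}\up w\Bigr)\cap T\subseteq V\\
&\iff f(\up x)\cap T\subseteq V\\
&\iff \up\bigl(f(\up x)\cap T\bigr)\subseteq V,
\end{align*}
and
\begin{align*}
x\in j_S(f^*(V)) &\iff \up x\cap S\subseteq f^*(V)\\
&\iff \bigcup_{s\in\up x\cap S} f(\up s)\subseteq V\\
&\iff f\bigl(\up(\up x\cap S)\bigr)\subseteq V.
\end{align*}
Consequently $f^*(j_T(V))=j_S(f^*(V))$ for all $V\in\upsets(Y)$ if and only if, for every $x\in X$ and every $V\in\upsets(Y)$,
\[
\up\bigl(f(\up x)\cap T\bigr)\subseteq V \iff f\bigl(\up(\up x\cap S)\bigr)\subseteq V .
\]

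Finally, for fixed $x$ both $\up(f(\up x)\cap T)$ and $f(\up(\up x\cap S))$ are upsets of the finite poset $Y$ (the latter by Remark~\ref{rem:p-morphism}(2)), and an upset of $Y$ is determined by which upsets contain it: if $P,Q$ are upsets with $P\subseteq V\iff Q\subseteq V$ for all upsets $V$, then taking $V=P$ and $V=Q$ gives $P=Q$. Hence the displayed biconditional holds for all $V$ precisely when $\up(f(\up x)\cap T)=f(\up(\up x\cap S))$, i.e.\ when $(\ast)$ holds at $x$; running through all $x\in X$ gives the asserted equivalence. The proof involves no real obstacle beyond careful bookkeeping of upward closures --- in particular $f(\up x)\cap T$ need not be an upset even though $f(\up x)$ is, which is exactly why $\up$ appears on the left-hand side of $(\ast)$ --- so I would take care to justify each passage between a set and its upward closure by the upset-hood of the relevant test set $V$.
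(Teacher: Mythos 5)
Your proposal is correct and follows essentially the same route as the paper: both compute $x \in f^*(j_T(V))$ and $x \in j_S(f^*(V))$ pointwise, reduce them to $\up(f(\up x)\cap T)\subseteq V$ and $f(\up(\up x\cap S))\subseteq V$ respectively (using Remark~\ref{rem:p-morphism}(2) to see the relevant images are upsets), and then conclude by observing that two upsets contained in the same upsets must be equal. The only cosmetic difference is that you keep the outer $\up$ on the left-hand side throughout, whereas the paper first simplifies via $\up f(\up x)=f(\up x)$; the substance is identical.
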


\begin{proof}
For $x \in X$ and $V \in \upsets(Y)$, we have
\begin{align*}
x \in f^*(j_T(V)) & \mbox{ iff } x \in X \setminus \dn f^{-1} (Y \setminus j_T(V)) \\
& \mbox{ iff } \up x \cap f^{-1}(Y \setminus j_T(V))= \emptyset \\
& \mbox{ iff } f(\up x) \subseteq j_T(V)\\
& \mbox{ iff } f(\up x) \subseteq Y \setminus \dn (T \setminus V)\\
& \mbox{ iff } f(\up x) \cap \dn(T \setminus V)= \emptyset\\
& \mbox{ iff } \up f(\up x) \cap (T \setminus V) = \emptyset\\
& \mbox{ iff } \up f(\up x)\cap T \subseteq V.
\end{align*}
By Remark~\ref{rem:p-morphism}(2), $\up f(\up x)=f(\up x)$. Therefore, $x \in f^*(j_T(V))$ iff $f(\up x) \cap T \subseteq V$. On the other hand,
\begin{align*}
x \in j_S(f^*(V)) & \mbox{ iff } x \in X \setminus \dn (S \setminus f^*(V))\\
& \mbox{ iff } \up x \cap S \subseteq f^*(V)\\
& \mbox{ iff } \up x \cap S \subseteq X \setminus \dn f^{-1}(Y \setminus V)\\
& \mbox{ iff } \up (\up x \cap S) \cap f^{-1}(Y \setminus V)= \emptyset\\
& \mbox{ iff } f(\up (\up x \cap S)) \subseteq V.
\end{align*}
Thus, $f^*(j_T(V))=j_S(f^*(V))$ for every $V \in \upsets(Y)$ iff for every $x \in X$ and $V \in \upsets(Y)$ we have $f(\up x) \cap T \subseteq V$ iff $f(\up (\up x \cap S)) \subseteq V$. Since $f(\up(\up x \cap S))$ is an upset by Remark~\ref{rem:p-morphism}(2), the latter condition is easily seen to be equivalent to Condition $(\ast)$ holding for every $x \in X$.
\end{proof}

\begin{lemma} \label{lem:S-morph}
Let $(X,S)$ and $(Y,T)$ be two finite S-posets, and let $f:X \to Y$ be a \kmor{} with domain $D \subseteq X$. Then Condition $(\ast)$ holds for every $x \in X$ iff the following two conditions hold:
\begin{enumerate}
\item $f^{-1}(T)=D \cap S$,
\item if $s \in S$, $d \in D$, and $s \le d$, then there are $s' \in S \cap D$ and $d' \in D$ such that $s\le s' \le d'$ and $f(d)=f(d')$.
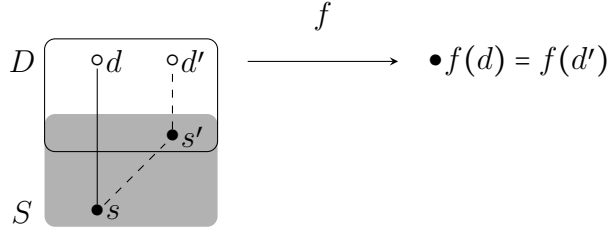
\begin{figure}[H]
\begin{tikzpicture}[>=stealth,inner sep=.0em,execute at end picture=
{\begin{pgfonlayer}{background}
\fill[gray!60!white,rounded corners] (-0.7,-2.2) rectangle (1.6,-.7);
\end{pgfonlayer}
}]
\node[label=right:$d$] (d) at (0,0) {$\circ$};
\node[label=right:$d'$] (d') at (1,0) {$\circ$};
\node[label=right:$s'$] (s') at (1,-1) {$\bullet$};
\node[label=right:$s$] (s) at (0,-2) {$\bullet$};
\draw (s) -- (d);
\draw[dashed] (s) -- (s');
\draw[dashed] (s') -- (d');
\draw[rounded corners] (-0.7,-1.2) rectangle (1.6,.3);
\node at (-1,0) {$D$};
\node at (-1,-2) {$S$};
\draw[->] (2,0) -- (4,0);
\node at (3,0.6) {$f$};
\node[label=right:${f(d)=f(d')}$] (fd) at (4.5,0) {$\bullet$};
\end{tikzpicture}
\caption{The second condition in the definition of an S-morphism.}\label{fig:S-morphism}
\end{figure}
\end{enumerate}
\end{lemma}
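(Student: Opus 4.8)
The plan is to prove the biconditional by establishing each direction separately, in both cases reducing everything to the defining clauses of a \kmor{} (Definition~\ref{def:kmor}) and to the facts recorded in Remark~\ref{rem:p-morphism}(2): $f$ sends upsets to upsets, and $f(\up z)=\up f(z)$ whenever $z\in D$. I will use repeatedly that $f(\up(\up x\cap S))$ is always an upset (being the image under $f$ of the upset $\up(\up x\cap S)$), while $f(\up x)\cap T$ in general is not — which is precisely why Condition $(\ast)$ carries the $\up$ on the left.

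For the direction \emph{$(\ast)\Rightarrow(1),(2)$} I would first derive (1). To show $f^{-1}(T)\subseteq D\cap S$, take $x\in f^{-1}(T)$; since $x\in D$ we get $f(x)\in f(\up x)\cap T\subseteq\up(f(\up x)\cap T)=f(\up(\up x\cap S))$, so $f(x)=f(z)$ for some $z\in D$ with $s\le z$ for some $s\in\up x\cap S$. Then $x\le s\le z$, and if $x<z$ the strict clause Definition~\ref{def:kmor}(1) gives $f(x)<f(z)=f(x)$, absurd; hence $x=z$, forcing $s=x\in S$. For the reverse inclusion, if $x\in D\cap S$ then $x\in\up(\up x\cap S)$, so $f(x)\in\up(f(\up x)\cap T)$, i.e.\ $v\le f(x)$ for some $v\in f(\up x)\cap T=\up f(x)\cap T$, whence $v=f(x)\in T$. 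For (2) I would then use (1): given $s\in S$, $d\in D$ with $s\le d$, note $d\in\up(\up s\cap S)\cap D$, so instantiating $(\ast)$ at $s$ gives $f(d)\in\up(f(\up s)\cap T)$; thus $f(e)\le f(d)$ for some $e\in\up s\cap D$ with $f(e)\in T$, and $e\in S\cap D$ by (1). Taking $s'=e$, and taking $d'=e$ if $f(e)=f(d)$ or else — when $f(e)<f(d)$ — invoking the back clause Definition~\ref{def:kmor}(2) to get $d'\in D$ with $e<d'$ and $f(d')=f(d)$, yields exactly the data required by (2).

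For the converse \emph{$(1),(2)\Rightarrow(\ast)$} I would check the two inclusions of $(\ast)$ directly for a fixed $x$. For $\subseteq$: any $w\in\up(f(\up x)\cap T)$ satisfies $f(d)\le w$ for some $d\in\up x\cap D$ with $f(d)\in T$; by (1) $d\in S$, so $f(d)\in f(\up(\up x\cap S))$, and since the latter is an upset, $w$ lies in it too. For $\supseteq$: any $w\in f(\up(\up x\cap S))$ has the form $w=f(z)$ with $z\in D$ and $s\le z$ for some $s\in\up x\cap S$; applying (2) to $s\le z$ produces $s'\in S\cap D$ and $d'\in D$ with $s\le s'\le d'$ and $f(d')=w$. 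Then $f(s')\in T$ by (1), $f(s')\in f(\up x)$ since $s'\ge x$ and $s'\in D$, and $f(s')\le f(d')=w$; hence $w\in\up(f(\up x)\cap T)$.

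The main obstacle, such as it is, is the bookkeeping forced by the partiality of $f$: because the domain $D$ need not be an upset, every witness extracted from an instance of $(\ast)$ must be checked to lie in $D$, and the argument for (1) genuinely needs the \emph{strict} monotonicity of Definition~\ref{def:kmor}(1) — its nonstrict form would not collapse $x\le s\le z$ to equalities. The one step that requires real care is the proof of (2) and the dual $\supseteq$-inclusion, where the two Köhler clauses must be used for their separate purposes — monotonicity to push $f(s')$ below the target point, the back clause to manufacture $d'$ — but once this is set up there is no further difficulty.
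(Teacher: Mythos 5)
Your proof is correct and follows essentially the same route as the paper's: both directions are handled by unwinding $(\ast)$ at the relevant point, using strict monotonicity to collapse $x\le s\le z$ in the proof of (1), and combining monotonicity with the back clause to produce $d'$ in the proof of (2) and in the $\supseteq$-inclusion. The only cosmetic difference is that you split the case $f(e)=f(d)$ versus $f(e)<f(d)$ explicitly where the paper invokes the $\le$-form of the back clause from Remark~\ref{rem:p-morphism}(1); the content is identical.
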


\begin{proof}
First suppose that $(\ast)$ holds for every
$x \in X$.
To see (1), let $x \in f^{-1}(T)$, so $x \in D$ and $f(x)\in T$. We have
\[
f(x) \in f(\up x)\cap T \subseteq \up (f(\up x) \cap T) =f(\up (\up x \cap S)).
\]
Therefore, there is $z \in \up (\up x \cap S)$ such that $f(x)=f(z)$. This implies that there is $w \in S$ such that $x \le w \le z$. Since $f$ preserves $<$ and $f(x)=f(z)$, we have $x=w=z$, so $x \in S$. This shows $f^{-1}(T) \subseteq D \cap S$. For the reverse inclusion, let $x \in D \cap S$. Then $x \in \up (\up x \cap S)$. Therefore, $f(x) \in f(\up (\up x \cap S))=\up (f(\up x) \cap T)$. Thus, there is
$y \in f(\up x) \cap T$ with
$y \le f(x)$. Since $x \in D$, by Remark~\ref{rem:p-morphism}(2), $f(\up x)=\up f(x)$. Therefore, $f(x) \le y \le f(x)$, yielding $f(x)=y$, which gives $x \in f^{-1}(T)$. This proves $D \cap S \subseteq f^{-1}(T)$, so (1) holds.

To see (2), let $s \in S$, $d \in D$, and $s \le d$. Then $d \in \up (\up s \cap S)$ which implies $f(d) \in f(\up (\up s \cap S))=\up (f(\up s) \cap T)$. Therefore, there is $t \in T$ such that $t \le f(d)$ and $t=f(s')$ for some $s' \ge s$. By (1), $s' \in f^{-1}(T)=S \cap D$. Since $f(s') \le f(d)$ and $f$ is a \kmor{}, there is $d' \in D$ such that $s' \le d'$ and $f(d')=f(d)$. Thus, (2) holds.

Conversely, we prove that (1) and (2) imply that $(\ast)$ holds for all $x \in X$. To see the left-to-right inclusion, let $y \in \up (f(\up x) \cap T)$. Then there is $t \in T$ such that $t \le y$ and $t \in f(\up x)$. So there is $s \in D$ such that $x \le s$ and $t =f(s)$. Since $s \in f^{-1}(T)$, we have $s \in S$ by (1). Also $f(s)=t \le y$ implies that there is $d \in D$ such that $s \le d$ and $f(d)=y$. Thus, $d \in \up (\up x \cap S)$, and so $y \in f(\up (\up x \cap S))$.

To see the right-to-left inclusion, let $y \in f(\up(\up x \cap S))$. Then there is $d \in D\cap \up (\up x \cap S)$ such that $y=f(d)$. Therefore, there is $s \in S$ such that $x \le s \le d$. By (2), there are $s' \in S\cap D$, $d' \in D$ such that $s \le s' \le d'$ and $f(d')=f(d)$. So $f(s') \le f(d')=f(d)=y$. Since $s' \in S\cap D$, (1) implies $f(s') \in T$. Thus, $f(s') \in f(\up x) \cap T$. Consequently, $y \in \up (f(\up x) \cap T)$.
\end{proof}

\begin{definition}
Let $(X,S)$ and $(Y,T)$ be two S-posets. We call a \kmor{} $f:X \to Y$ an \textit{S-morphism} if it satisfies the two conditions of Lemma~\ref{lem:S-morph}.
\end{definition}

\begin{remark}\label{rem:total S-morp conditions}
If $f:X \to Y$ is a total \kmor{}, then the second condition of Lemma~\ref{lem:S-morph} is trivially satisfied. Therefore, a total \kmor{} is an S-morphism iff $f^{-1}(T)=S$.
\end{remark}

It is easy to see that the identity morphism is an S-morphism. We next show that the composition of two S-morphisms is an S-morphism. This will imply that S-posets and S-morphisms form a category.

\begin{lemma}
The composition of two S-morphisms is an S-morphism.
\end{lemma}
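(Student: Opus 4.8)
The plan is to deduce this from what has already been proved about nuclear homomorphisms, rather than by verifying the combinatorial conditions by hand. Let $(X,S)$, $(Y,T)$, $(Z,U)$ be finite S-posets and let $f : X \to Y$, $g : Y \to Z$ be S-morphisms, with domains $D \subseteq X$ and $E \subseteq Y$. We already know that the set-theoretic composition $gf : X \to Z$ is a \kmor{} (with domain $f^{-1}(E) \subseteq D$), so by Lemma~\ref{prop:S-morph} it only remains to check that its dual map $(gf)^* : \upsets(Z) \to \upsets(X)$ is a nuclear homomorphism. Since $(\;)^*$ is a contravariant functor, $(gf)^* = f^* g^*$; and since $f$ and $g$ are S-morphisms, Lemma~\ref{prop:S-morph} gives that $f^*$ and $g^*$ are nuclear homomorphisms. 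Nuclear homomorphisms are closed under composition (they preserve $\meet$, $\to$, and $j$), so $f^* g^*$ is a nuclear homomorphism, and hence $gf$ is an S-morphism.

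If a direct argument is preferred, one instead verifies conditions (1) and (2) of Lemma~\ref{lem:S-morph} for $gf$, writing $D'' = f^{-1}(E)$ for its domain. Condition (1) is a short computation: $(gf)^{-1}(U) = f^{-1}(g^{-1}(U)) = f^{-1}(T \cap E) = (D \cap S) \cap f^{-1}(E) = D'' \cap S$, using $g^{-1}(U) = T \cap E$, $f^{-1}(T) = D \cap S$, and $D'' \subseteq D$. Condition (2) requires, given $s \in S$ and $d \in D''$ with $s \le d$, chaining the S-morphism condition for $f$ (producing $s_1 \in S \cap D$ and $d_1 \in D$ with $s \le s_1 \le d_1$ and $f(d) = f(d_1)$, whence also $d_1 \in D''$ and $f(s_1) \in T$), then the S-morphism condition for $g$ applied to $f(s_1) \in T$ and $f(d_1) \in E$ with $f(s_1) \le f(d_1)$ (producing $t' \in T \cap E$ and $e' \in E$ with $f(s_1) \le t' \le e'$ and $g(f(d_1)) = g(e')$), and finally two applications of the \kmor{} lifting property of $f$ to pull $t'$ and then $e'$ back along $f$ to witnesses $s' \le d'$ in $D$; one then checks $s' \in S \cap D''$, $d' \in D''$, $s \le s' \le d'$, and $gf(d') = g(e') = g(f(d_1)) = gf(d)$.

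The delicate point in the direct route — and the reason the functorial argument is cleaner — is the domain bookkeeping: one must check that each lifted witness lands not merely in $D$ but in $S \cap D''$, which hinges on the identities $D'' = f^{-1}(E)$ and $f^{-1}(T) = D \cap S$ together with the fact that $f$ lifts $\le$-comparabilities within its domain. The functorial proof sidesteps this entirely, so that is the argument I would present.
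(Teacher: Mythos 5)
Your proposal is correct, and your primary (functorial) argument takes a genuinely different route from the paper's. The paper proves this lemma by directly verifying conditions (1) and (2) of Lemma~\ref{lem:S-morph} for the composite, with the domain bookkeeping and double lifting you describe in your second paragraph --- indeed, your ``direct route'' sketch reproduces the paper's argument essentially step for step, and all the steps check out (in particular $d_1\in f^{-1}(E)$ because $f(d_1)=f(d)\in E$, and the two lifts land in $S\cap D''$ and $D''$ for exactly the reasons you give). Your functorial argument is cleaner and is legitimately available at this point in the paper: $gf$ is already known to be a \kmor{} with domain $f^{-1}(E)$, the identity $(gf)^*=f^*g^*$ is part of the already-established functoriality of $(\;)^*:\pfk\to\isf$, and Lemmas~\ref{prop:S-morph} and~\ref{lem:S-morph} together give an \emph{iff} characterization of S-morphisms among \kmor{}s as exactly those whose dual commutes with the nuclei, so closure under composition transfers trivially from nuclear homomorphisms. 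There is no circularity, since nothing here relies on $\spf$ being a category. What the paper's direct proof buys is self-containedness at the level of posets (and it does not lean on the unproved-but-asserted claim that $(\;)^*$ reverses compositions); what your argument buys is brevity and a conceptual explanation of \emph{why} the combinatorial conditions compose. Either is acceptable.
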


\begin{proof}
Let $(X_1,S_1)$, $(X_2,S_2)$, $(X_3,S_3)$ be S-posets and let $f_1:X_1 \to X_2$, $f_2:X_2 \to X_3$ be S-morphisms with domains $D_1$ and $D_2$. Then $f_2  f_1$ is a \kmor{} with domain $D_3=f_1^{-1}(D_2)$. We show that the two conditions of Lemma~\ref{lem:S-morph} are satisfied. For the first condition, since $f_1$ and $f_2$ are S-morphisms, we have
\begin{align*}
(f_2 f_1)^{-1}(S_3) & = f_1^{-1}(f_2^{-1}(S_3)) \\
& = f_1^{-1}(D_2 \cap S_2) \\
& = f_1^{-1}(D_2) \cap f_1^{-1}(S_2)\\
& = D_3 \cap D_1 \cap S_1 = D_3 \cap S_1.
\end{align*}
Thus, the first condition is satisfied. For the second condition, let $s \in S_1$, $d \in D_3$, and $s \le d$. Since $d \in D_3 \subseteq D_1$, $s \in S_1$, and $f_1$ is an S-morphism, there are $s_1 \in D_1 \cap S_1$ and $d_1 \in D_1$ such that $s \le s_1 \le d_1$ and $f_1(d)=f_1(d_1)$. Since $f_1^{-1}(S_2)=D_1 \cap S_1$, we have $f_1(s_1) \in S_2$. From $d \in D_3=f_1^{-1}(D_2)$ it follows that $f_1(d_1)=f_1(d) \in D_2$. Since $f_1$ is order-preserving, $f_1(s_1) \le f_1(d_1)$. Because $f_2$ is an S-morphism, there are $s_2 \in S_2 \cap D_2$ and $d_2 \in D_2$ such that $f_1(s_1) \le s_2 \le d_2$ and $f_2(d_2)=f_2f_1(d_1)=f_2f_1(d)$. Since $f_1$ is a \kmor{} and $f_1(s_1) \le s_2$, there is $s_3 \in D_1$ such that $s_1 \le s_3$ and $f_1(s_3)=s_2$. We have $f_1(s_3)=s_2 \in S_2 \cap D_2$, so $s_3 \in f_1^{-1}(S_2)=D_1 \cap S_1$ and $s_3 \in f_1^{-1}(D_2)=D_3$. Thus, $s_3 \in S_1 \cap D_3$. From $f_1(s_3)=s_2 \le d_2$ it follows that there is $d_3 \in D_1$ such that $s_3 \le d_3$ and $f_1(d_3)=d_2$. Since $d_2 \in D_2$, we have $d_3 \in f_1^{-1}(D_2)=D_3$ and $f_2f_1(d_3)=f_2(d_2)=f_2f_1(d)$. We also have $s \le s_1 \le s_3$, so taking $s'=s_3$ and $d'=d_3$ yields $s \le s' \le d'$ with $s' \in S_1 \cap D_3$, $d' \in D_3$, and $f_2  f_1(d)=f_2  f_1(d')$. Thus, $f_2  f_1$ is an S-morphism.
\end{proof}

\begin{definition}
Let $\spf$ be the category of finite S-posets and S-morphisms.
\end{definition}

\begin{remark} \label{rem:iso in spf}
Let $f:X \to Y$ be a map between finite S-posets $(X,S)$ and $(Y,T)$. If $f$ is a poset isomorphism between $X$ and $Y$,
then it follows from Remark~\ref{rem:total S-morp conditions} that
$f$ is an S-morphism iff $f(S)=T$. Therefore, $f$ is an isomorphism in $\spf$ iff $f$ is a poset isomorphism and $f(S)=T$.
\end{remark}

We next define contravariant functors between $\nisf$ and $\spf$. The functor $(\;)^*:\spf \to \nisf$ associates with each finite S-poset $\mathfrak{X}=(X,S)$ the finite nuclear implicative semilattice $\mathfrak{X}^*=(\upsets(X),j_S)$; and with each S-morphism $f:\mathfrak{X} \to \mathfrak{Y}$ the nuclear implicative semilattice homomorphism $f^*:\mathfrak{Y}^* \to \mathfrak{X}^*$ given by $f^*(V)=X \setminus \dn (Y \setminus V)$ for each $V \in \upsets(Y)$.
That $(\;)^*$ is well defined
follows from Lemmas~\ref{prop:S-morph} and~\ref{lem:S-morph}. That $(\;)^*$ preserves identities and reverses compositions follows from K\"ohler duality. Thus, $(\;)^*$ is a well-defined contravariant functor.

To define the functor $(\;)_*:\nisf \to \spf$ we require the following two lemmas.

\begin{lemma}\label{lem:meet-prime comp of hh_*(x)}
Let $A$ and $B$ be two finite implicative semilattices and $h:A \to B$ an implicative semilattice homomorphism. For any $x$ in the domain of $h_*:X_B \to X_A$, the set $h_*^{-1}h_*(x)$ coincides with the set of the meet-prime components of $hh_*(x)$ in $B$. In particular, $x$ is a meet-prime component of $hh_*(x)$.
\end{lemma}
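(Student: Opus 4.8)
The plan is to write $p := h_*(x)$, so that $hh_*(x) = h(p)$, and to prove that the fibre $h_*^{-1}(p)$ coincides with the set $C$ of meet-prime components of $h(p)$ in $B$. Throughout I would use the facts just established: $h_*$ has domain $S_j = X_B \cap B_j$; $h\colon A \to B_j$ is a Heyting algebra homomorphism with $h_*$ as its right adjoint, so for $a \in A$ and $y \in B_j$ we have $h(a) \le y$ iff $a \le h_*(y)$, and in particular the counit $hh_*(y) \le y$ holds; the restriction $h_*\colon (S_j,\sqsubseteq) \to (X_A,\sqsubseteq)$ is a p-morphism (Theorem~\ref{thm:haf} applied to $h\colon A \to B_j$); and the description of $S_j$ from Remark~\ref{rem:S_j}, namely that every $y \in S_j$ can be written as $y = b \to h(a)$ for suitable $b \in B$ and $a \in A$.

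I would begin with two routine observations: since $p \in X_A$ and $h(p) \le x$ with $x$ meet-prime, $h(p) \ne 1$, so $C$ is nonempty and $h(p) = \bigwedge C$; and $h(p) = hh_*(x) \in B_j$, so by Lemma~\ref{lem:components of fixpoints} its meet-prime components are fixpoints of $j$, giving $C \subseteq X_B \cap B_j = S_j$. The heart of the argument is then the claim that $h_*^{-1}(p)$ is an antichain in $B$. To prove it, suppose $z \le y$ with $y, z \in h_*^{-1}(p)$, and write $y = b \to h(a)$ via Remark~\ref{rem:S_j}. Then $h(a) \le y$ forces $a \le h_*(y) = p$, hence $h(a) \le h(p) = hh_*(z) \le z$; since $y = b \to h(a)$ gives $b \meet y \le h(a) \le z$ and $z$ is meet-prime, either $y \le z$ (so $y = z$) or $b \le z \le y = b \to h(a)$, and the latter yields $b \le h(a)$, whence $y = b \to h(a) = 1$, contradicting that $y$ is meet-prime. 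So $h_*^{-1}(p)$ is an antichain.

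Granting the antichain claim, both inclusions fall out. For $C \subseteq h_*^{-1}(p)$: if $c \in C \subseteq S_j$ then $h(p) \le c$, so $p \le h_*(c)$ by the adjunction; were this strict we would have $h_*(c) \sqsubseteq p$, and the lifting condition for the p-morphism $h_*$ would produce $c' \in S_j$ with $c' \le c$ and $h_*(c') = p$, whence $h(p) = hh_*(c') \le c' \le c$, and minimality of $c$ in $C$ forces $c' = c$, a contradiction; so $h_*(c) = p$. For $h_*^{-1}(p) \subseteq C$: if $y \in h_*^{-1}(p)$ then $h(p) = hh_*(y) \le y$, and if $y$ were not a minimal meet-prime above $h(p)$ there would be a meet-prime $w$ with $h(p) \le w < y$, hence (since $h(p) = \bigwedge C$ and $w$ is meet-prime) some $c \in C$ with $c \le w < y$; but $c \in C \subseteq h_*^{-1}(p)$ with $c \le y$ and $c \ne y$ contradicts the antichain property. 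Hence $h_*^{-1}(h_*(x)) = C$, and in particular $x \in h_*^{-1}(h_*(x)) = C$ is a meet-prime component of $hh_*(x)$.

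I expect the antichain claim to be the main obstacle: for an arbitrary p-morphism of posets the fibres need not be antichains, so this step genuinely exploits the special shape of the elements of $S_j$ (implications into the image of $h$) together with the adjunction $h \dashv h_*$. The remaining steps are bookkeeping with the adjunction, the counit, and the standard fact that in a finite implicative semilattice every element is the meet of its meet-prime components.
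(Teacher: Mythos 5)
Your proof is correct and follows essentially the same route as the paper: both inclusions are obtained from the counit $hh_*(y)\le y$, Lemma~\ref{lem:components of fixpoints} to place the components of $hh_*(x)$ in the domain of $h_*$, the lifting condition for one inclusion, and strictness of $h_*$ (your ``antichain claim'') for the other. The only real difference is that you re-derive the antichain property of the fibres from Remark~\ref{rem:S_j} and the adjunction, whereas this is exactly the strictness condition in the definition of a K\"ohler morphism, already established for $h_*$ in Lemma~\ref{lem:kohler morph}, so the step you anticipate as the main obstacle can simply be cited.
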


\begin{proof}
Let $l$ be the nucleus on $B$ corresponding to the subalgebra $h(A)$. Then $S_l=X_B\cap B_l$ is the domain of $h_*$. Let $x\in S_l$. First suppose that $z$ is a meet-prime component of $hh_*(x)$ in $B$. Since $hh_*(x) \in h(A) \subseteq B_l$, Lemma~\ref{lem:components of fixpoints} yields that $z \in S_l$. Therefore, $hh_*(x) \le z$ implies $h_*(x) \le h_*(z)$. Since $h_*:X_B \to X_A$ is a \kmor{} and the orders on $X_B$ and $X_A$ are duals of the orders on $A$ and $B$, from $h_*(x) \le h_*(z)$ it follows that there is $y \in S_l$ such that $y \le z$ and $h_*(y)=h_*(x)$. As $y \in S_l$, we have $hh_*(x)=hh_*(y) \le y \le z$.
Since $y$ is a meet-prime, by minimality of $z$,
we have $y=z$. Therefore, $h_*(z)=h_*(x)$, and so $z \in h_*^{-1}h_*(x)$.

Conversely, suppose that $z \in h_*^{-1}h_*(x)$, so $h_*(x)=h_*(z)$. Since $z \in S_l$, we have $hh_*(x) \le z$. Because $z$ is meet-prime, there is a meet-prime component $z'$ of $hh_*(x)$ such that $hh_*(x)\le z' \le z$. Since $hh_*(x) \in h(A) \subseteq B_l$, we have $z' \in S_l$ by Lemma~\ref{lem:components of fixpoints}, so $h_*(x) \le h_*(z')$,
and hence $h_*(z) \le h_*(z')$.
As $h_*$ is order-preserving, $h_*(z') \le h_*(z)$. Therefore, $h_*(z')=h_*(z)$. Since $z' \le z$ and $h_*$ is a \kmor{}, $z'=z$. Thus, $z$ is a meet-prime component of $hh_*(x)$.

Finally, since $x \in h_*^{-1}h_*(x)$, it follows that $x$ is a meet-prime component of $hh_*(x)$.
\end{proof}

\begin{lemma}
Let $\mathfrak{A}=(A,j)$ and $\mathfrak{B}=(B,k)$ be two finite nuclear implicative semilattices. If $h:\mathfrak{A} \to \mathfrak{B}$ is a nuclear implicative semilattice homomorphism, then $h_*:\mathfrak{B}_* \to \mathfrak{A}_*$ is an S-morphism.
\end{lemma}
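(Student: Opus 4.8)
The plan is to check the two conditions of Lemma~\ref{lem:S-morph} for the \kmor{} $h_*\colon X_B\to X_A$ with respect to the S-posets $\mathfrak{B}_*=(X_B,S_k)$ and $\mathfrak{A}_*=(X_A,S_j)$, where $S_j=X_A\cap A_j$ and $S_k=X_B\cap B_k$. I recall from Section~\ref{sec: finite duality implicative semilattices} that the domain of $h_*$ is $D=S_l:=X_B\cap B_l$, where $l=\bigwedge\{w_{h(a)}\mid a\in A\}$ is the nucleus on $B$ attached to the subalgebra $h(A)$, that $h\colon A\to B_l$ is a Heyting homomorphism whose right adjoint restricts to $h_*$ on meet-primes, and (see the proof of Lemma~\ref{lem:kohler morph}) that $h_*\colon D\to X_A$ is a p-morphism. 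The only new input beyond what K\"ohler duality already provides is that $h$ is \emph{nuclear}, i.e.\ $h\circ j=k\circ h$. Together with Lemma~\ref{lem:components of fixpoints} (meet-prime components of a fixpoint are again fixpoints) and Lemma~\ref{lem:meet-prime comp of hh_*(x)} (for $x\in D$ the point $x$ is a meet-prime component of $hh_*(x)$), this is exactly what I expect to make both conditions go through.

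For condition~(1) I would prove $h_*^{-1}(S_j)=D\cap S_k$ by two inclusions. If $x\in D\cap S_k$, transpose along the adjunction: $j(h_*(x))\le h_*(x)$ in $A$ iff $h(j(h_*(x)))\le x$ in $B$, and $h(j(h_*(x)))=k(hh_*(x))\le k(x)=x$ using that $h$ is nuclear, the counit $hh_*(x)\le x$, monotonicity of $k$, and $x=k(x)$; hence $h_*(x)\in A_j\cap X_A=S_j$. Conversely, if $x\in h_*^{-1}(S_j)$ then $x\in D$ and $h_*(x)\in A_j$, so $hh_*(x)=h(j(h_*(x)))=k(hh_*(x))\in B_k$; since $x$ is a meet-prime component of $hh_*(x)$ by Lemma~\ref{lem:meet-prime comp of hh_*(x)}, Lemma~\ref{lem:components of fixpoints} yields $x\in B_k$, hence $x\in S_k$.

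For condition~(2), after unwinding the (dual) poset order into the order of $B$, I am given $s\in S_k$, $d\in D$ with $d\le s$, and must produce $s'\in S_k\cap D$ and $d'\in D$ with $d'\le s'\le s$ and $h_*(d')=h_*(d)$. I would proceed as follows. First, $hh_*(d)\le d\le s$ by Lemma~\ref{lem:meet-prime comp of hh_*(x)}. Applying $k$ and using that $h$ is nuclear, $h(j(h_*(d)))=k(hh_*(d))\le s$; writing $j(h_*(d))$ as the meet of its meet-prime components (all in $S_j$ by Lemma~\ref{lem:components of fixpoints}) and using that $s$ is meet-prime, choose such a component $p\in S_j$ with $h(p)\le s$. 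Since $p\in A_j$ and $h$ is nuclear, $h(p)=k(h(p))\in B_k$, while also $h(p)\in h(A)\subseteq B_l$; hence a meet-prime component $q$ of $h(p)$ with $q\le s$ lies in $S_k\cap S_l=S_k\cap D$ by Lemma~\ref{lem:components of fixpoints}, and I set $s':=q$. From $h(p)\le q$ and the adjunction, $p\le h_*(q)$, and since $h_*(d)\le j(h_*(d))\le p$ I get $h_*(d)\le h_*(q)$ in $A$; equivalently $h_*(q)\le h_*(d)$ in $X_A$. Finally, applying the back clause of the p-morphism $h_*\colon D\to X_A$ at $q$ produces $d'\in D$ with $d'\le q=s'$ and $h_*(d')=h_*(d)$, which is what was needed.

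I expect condition~(2) to be the main obstacle: it forces one to track simultaneously the fibres of $h_*$ and the interaction of the two nuclei $j$ and $k$, and the chain that lands $s'$ inside $S_k\cap D$ below $s$ is precisely where nuclearity of $h$ is genuinely used; keeping the dual orders straight is the other source of errors. As a sanity check, there is a slicker route that bypasses condition~(2): by naturality of the isomorphism $\alpha$ of K\"ohler duality, $\alpha_B\circ h=(h_*)^*\circ\alpha_A$, and combining this with Lemma~\ref{lem:alpha_A} applied to $A$ and to $B$ shows that $(h_*)^*$ intertwines $j_{S_j}$ and $j_{S_k}$, i.e.\ is a nuclear homomorphism; then Lemma~\ref{prop:S-morph} gives Condition~$(\ast)$ for $h_*$, and Lemma~\ref{lem:S-morph} converts Condition~$(\ast)$ back into conditions~(1) and~(2), so $h_*$ is an S-morphism.
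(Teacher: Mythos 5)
Your main argument is correct and follows essentially the same route as the paper: condition (1) of Lemma~\ref{lem:S-morph} is proved exactly as in the text (nuclearity gives $khh_*(x)=hjh_*(x)$, then Lemmas~\ref{lem:meet-prime comp of hh_*(x)} and~\ref{lem:components of fixpoints} transfer fixpoint membership between $x$ and $hh_*(x)$), and condition (2) differs only in bookkeeping. For (2) the paper stays entirely in $B$, taking $s'$ to be a meet-prime component of $khh_*(d)$ below $s$ and $d'$ a meet-prime component of $hh_*(d)$ below $s'$, with $h_*(d')=h_*(d)$ coming from the fiber description in Lemma~\ref{lem:meet-prime comp of hh_*(x)}; you instead lift to $A$, pick a meet-prime component $p$ of $jh_*(d)$ in $S_j$, descend to a component $q$ of $h(p)$, and recover $d'$ via the back clause of the p-morphism --- both versions are sound. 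Your closing ``sanity check'' is in fact a genuinely different and slicker proof: naturality of $\alpha$ together with Lemma~\ref{lem:alpha_A} applied to both algebras shows $(h_*)^*$ commutes with the induced nuclei, and Lemmas~\ref{prop:S-morph} and~\ref{lem:S-morph} then deliver the two conditions for free; this buys brevity at the cost of leaning on naturality of K\"ohler duality, whereas the paper's hands-on computation keeps the argument self-contained at the level of meet-prime components.
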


\begin{proof}
By Lemma~\ref{lem:kohler morph},
$h_*:X_B \to X_A$ is a \kmor{} with domain $S_l$ where $l$ is the nucleus on $B$ corresponding to the subalgebra $h(A)$. It remains to verify the two conditions of Lemma~\ref{lem:S-morph}. We first prove the first condition that $h_*^{-1}(S_j)=S_l \cap S_k$. For the left-to-right inclusion, let $x \in h_*^{-1}(S_j)$. Then $x \in S_l$ and $h_*(x) \in S_j$. Since $jh_*(x)=h_*(x)$ and $h$ is a nuclear implicative semilattice homomorphism, we have
\[
khh_*(x)=hjh_*(x)=hh_*(x).
\]
Therefore, $hh_*(x) \in B_k$. By Lemma~\ref{lem:meet-prime comp of hh_*(x)}, $x$ is a meet-prime component of $hh_*(x)$. So Lemma~\ref{lem:components of fixpoints} implies that $x \in B_k$. Thus, $x \in S_l \cap S_k$. For the right-to-left inclusion, let $x \in S_l \cap S_k$. Since $x \in S_l$, we have $hh_*(x) \le x$. That $h$ is a nuclear implicative semilattice homomorphism then implies
\[
hjh_*(x)=khh_*(x) \le k(x)=x.
\]
Therefore, $jh_*(x) \le h_*(x)$ which yields $h_*(x) \in S_j$. Thus, $x \in h_*^{-1}(S_j)$.

We next prove the second condition of Lemma~\ref{lem:S-morph}. Recalling that the order on $X_B$ is dual to the order of $B$, let $s \in S_k$, $d \in S_l$, and $d \le s$. We have
\[
khh_*(d) \le k(d) \le k(s) =s.
\]
Since $s$ is meet-prime, there is a meet-prime component $s'$ of $khh_*(d)$ such that $s' \le s$. Note that $khh_*(d) \in B_k$ and $khh_*(d)=hjh_*(d) \in h(A) \subseteq B_l$. Therefore, by Lemma~\ref{lem:components of fixpoints}, $s' \in S_k \cap S_l$. Since $hh_*(d) \le khh_*(d) \le s'$ and $s'$ is meet-prime, there is a meet-prime component $d'$ of $hh_*(d)$ such that $d' \le s'$. Since $hh_*(d) \in h(A) \subseteq B_l$, Lemma~\ref{lem:components of fixpoints} implies that $d' \in S_l$. Also, $d' \in h_*^{-1}h_*(d)$ by Lemma~\ref{lem:meet-prime comp of hh_*(x)}. Thus, $h_*(d)=h_*(d')$.
\end{proof}

We are ready to define the functor $(\;)_*:\nisf \to \spf$ which associates with each finite nuclear implicative semilattice $\mathfrak{A}=(A,j)$ the S-poset $\mathfrak{A}_*=(X_A,S_j)$; and with each nuclear implicative semilattice homomorphism $h:\mathfrak{A} \to \mathfrak{B}$ the S-morphism $h_*:\mathfrak{B}_* \to \mathfrak{A}_*$.
That $(\;)_*$ preserves identities and reverses compositions is immediate from K\"ohler duality, thus $(\;)_*$ is a contravariant functor.

It follows from Theorem~\ref{thm: (A,j) iso to (A,j)_*^*} that for every $\mathfrak{A}=(A, j)\in \nisf$ the map $\alpha_A:\mathfrak{A} \to (\mathfrak{A}_*)^*$ is an isomorphism in $\nisf$. We next show that for every $\mathfrak{X}=(X,S) \in \spf$ the map $\varepsilon_X:\mathfrak{X} \to (\mathfrak{X}^*)_*$ is an isomorphism in $\spf$.

\begin{lemma} \label{lem:epsilon S-iso}
If $(X,S)$ is a finite S-poset, then $\varepsilon_X:X \to X_{\upsets(X)}$ is an S-poset isomorphism between $(X,S)$ and $(X_{\upsets(X)}, S_j)$.
\end{lemma}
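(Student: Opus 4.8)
The plan is to reduce the statement to the finite Heyting duality together with the isomorphism criterion of Remark~\ref{rem:iso in spf}. Recall that $\varepsilon_X(x)=X\setminus\dn x$ and that, by Theorem~\ref{thm:haf} and the formula~\eqref{eq:epsilon}, the map $\varepsilon_X\colon X\to X_{\upsets(X)}$ is already a poset isomorphism; in particular, every meet-prime element of $\upsets(X)$ is of the form $X\setminus\dn x$ for a unique $x\in X$. By Remark~\ref{rem:iso in spf}, it therefore remains only to verify the single equality $\varepsilon_X(S)=S_j$, where $j=j_S$ and $S_j=X_{\upsets(X)}\cap(\upsets(X))_{j_S}$ is the set of meet-prime upsets fixed by $j_S$. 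Since $\varepsilon_X$ is a bijection onto the set of meet-primes, this reduces further to showing that for each $x\in X$ one has $x\in S$ if and only if $\varepsilon_X(x)$ is a fixpoint of $j_S$.

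For this I would simply unwind $j_S$ on $\varepsilon_X(x)$:
\[
j_S(X\setminus\dn x)=X\setminus\dn\bigl(S\setminus(X\setminus\dn x)\bigr)=X\setminus\dn(S\cap\dn x).
\]
Thus $\varepsilon_X(x)$ is a fixpoint of $j_S$ precisely when $\dn x=\dn(S\cap\dn x)$. The inclusion $\dn(S\cap\dn x)\subseteq\dn x$ is automatic, so being a fixpoint is equivalent to $\dn x\subseteq\dn(S\cap\dn x)$; evaluating this at the point $x$ itself yields an $s\in S$ with $s\le x$ and $x\le s$, i.e.\ $x=s\in S$. Conversely, if $x\in S$ then $x\in S\cap\dn x$, so $\dn x\subseteq\dn(S\cap\dn x)$ and the two downsets coincide. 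This establishes the equivalence, hence $\varepsilon_X(S)=S_j$, and so $\varepsilon_X$ is an isomorphism in $\spf$.

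There is no real obstacle here; the only thing requiring mild care is being explicit that the poset-isomorphism half of Remark~\ref{rem:iso in spf} is already furnished by the finite Heyting duality, and that $S_j$ in the statement must be read as $X_{\upsets(X)}\cap(\upsets(X))_{j_S}$, so that checking $\varepsilon_X(S)=S_j$ is indeed the entire remaining content. (Naturality of $\varepsilon$ as a transformation in $\spf$, which is what is ultimately needed to complete the duality, then follows from its naturality in the Heyting/K\"ohler setting, since an S-morphism is in particular a \kmor{}.)
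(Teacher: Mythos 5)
Your proposal is correct and follows essentially the same route as the paper: invoke Remark~\ref{rem:iso in spf} to reduce everything to the single equality $\varepsilon_X(S)=S_{j_S}$, compute $j_S(X\setminus\dn x)=X\setminus\dn(S\cap\dn x)$, and observe that the fixpoint condition $\dn(S\cap\dn x)=\dn x$ holds exactly when $x\in S$. The only difference is that you spell out the last equivalence (which the paper asserts without detail), and your justification of it is correct.
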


\begin{proof}
We recall that the isomorphism $\varepsilon_X:X \to X_{\upsets(X)}$ is given by $\varepsilon_X(x)=X \setminus \dn x$. By Remark~\ref{rem:iso in spf}, it is sufficient to show that $\varepsilon_X(S)=S_{j_S}$. We have
\begin{align*}
S_{j_S} &= \{ X \setminus \dn x \mid j_S(X \setminus \dn x)=X \setminus \dn x \}\\
&=\{ X \setminus \dn x \mid X \setminus \dn( S \setminus (X \setminus \dn x))=X \setminus \dn x \}\\
&=\{ X \setminus \dn x \mid \dn( S \setminus (X \setminus \dn x))=\dn x \}\\
&=\{ X \setminus \dn x \mid \dn( S \cap \dn x)=\dn x \}.
\end{align*}
Since $\dn( S \cap \dn x)=\dn x$ iff $x\in S$, we have
\[
S_{j_S} = \{ X \setminus \dn x \mid x \in S \} = \varepsilon_X(S).
\]
\end{proof}

Consequently, $\alpha_A$ and $\varepsilon_X$ are natural isomorphisms, so the functors $(\;)^*:\spf \to \nisf$ and $(\;)_*:\nisf \to \spf$ yield a dual equivalence, and we arrive at the following generalization of K\"ohler duality.

\begin{theorem}\label{thm:duality for nisf}
$\nisf$ is dually equivalent to $\spf$.
\end{theorem}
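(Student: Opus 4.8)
The plan is to assemble the ingredients that are already in place. We have defined contravariant functors $(\;)^*:\spf\to\nisf$ and $(\;)_*:\nisf\to\spf$ and checked that both are well defined (the former via Lemmas~\ref{prop:S-morph} and~\ref{lem:S-morph}, the latter via Lemma~\ref{lem:meet-prime comp of hh_*(x)} and the subsequent lemma), and that both preserve identities and reverse compositions. To conclude that they form a dual equivalence, it remains only to exhibit natural isomorphisms between the two composite endofunctors $(\;)_*^*$, $(\;)^*_*$ and the respective identity functors.

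First, the candidate natural transformations are the maps $\alpha_A$ and $\varepsilon_X$ already used in K\"ohler duality, given by \eqref{eq:epsilon}. For each $\mathfrak A=(A,j)\in\nisf$, Theorem~\ref{thm: (A,j) iso to (A,j)_*^*} says that $\alpha_A$ is an isomorphism in $\nisf$ from $\mathfrak A$ onto $(\mathfrak A_*)^*=(\upsets(X_A),j_{S_j})$; thus $\alpha_A$ is componentwise an isomorphism of the right kind. Dually, for each $\mathfrak X=(X,S)\in\spf$, Lemma~\ref{lem:epsilon S-iso} says that $\varepsilon_X$ is an isomorphism in $\spf$ from $\mathfrak X$ onto $(\mathfrak X^*)_*$. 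So both families consist of isomorphisms in the appropriate category.

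Second, naturality. A nuclear homomorphism is in particular an implicative semilattice homomorphism, so a square of nuclear homomorphisms commutes precisely when the underlying square of implicative semilattice homomorphisms commutes; likewise an S-morphism is in particular a \kmor{}, so a square of S-morphisms commutes precisely when the underlying square of \kmor{}s commutes. Consequently, the naturality square for $\alpha$ at a nuclear homomorphism $h$ (respectively for $\varepsilon$ at an S-morphism $f$) is, after forgetting the extra structure, exactly the naturality square for $\alpha$ (respectively $\varepsilon$) underlying K\"ohler duality, which commutes. Hence $\alpha$ and $\varepsilon$ are natural isomorphisms.

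Combining these, the functors $(\;)^*:\spf\to\nisf$ and $(\;)_*:\nisf\to\spf$ together with the natural isomorphisms $\alpha$ and $\varepsilon$ give a dual equivalence of $\nisf$ and $\spf$. I do not expect any serious obstacle here: the substantive work — verifying that $(\;)^*$ and $(\;)_*$ are well-defined functors on S-morphisms and nuclear homomorphisms, and establishing the two isomorphism statements (Theorem~\ref{thm: (A,j) iso to (A,j)_*^*} and Lemma~\ref{lem:epsilon S-iso}) — has already been carried out, and what remains is the routine bookkeeping just described.
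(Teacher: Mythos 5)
Your proposal is correct and matches the paper's own argument: the paper likewise assembles the two contravariant functors, invokes Theorem~\ref{thm: (A,j) iso to (A,j)_*^*} for $\alpha_A$ and Lemma~\ref{lem:epsilon S-iso} for $\varepsilon_X$, and concludes that these are natural isomorphisms yielding the dual equivalence. Your explicit remark that naturality reduces to the K\"ohler-duality naturality squares (since commutativity of squares is detected after forgetting the nuclear/S-structure) is exactly the point the paper leaves implicit.
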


\section{Dual description of subalgebras}\label{sec:subalgebras}

We next would like to utilize Theorem~\ref{thm:duality for nisf} to give a dual description of finitely generated finite nuclear implicative semilattices. For this we require a dual description of subalgebras, which is the subject of this section.

We start by recalling from \cite[Lem.~3.4]{Koh81} that one-to-one morphisms in $\isf$ correspond to onto morphisms in $\pfk$, and that onto morphisms in $\isf$ correspond to one-to-one morphisms in $\pfk$. This result directly generalizes to the setting of $\nisf$ and $\spf$:

\begin{proposition}\label{prop:one-to-one and onto dual correspondence}
Let $h:A \to B$ be a nuclear homomorphism between finite nuclear implicative semilattices and let $h_*:X_B \to X_A$ be its dual S-morphism.
\begin{enumerate}
\item $h$ is one-to-one iff $h_*$ is onto,
\item $h$ is onto iff $h_*$ is total and one-to-one.
\end{enumerate}
\end{proposition}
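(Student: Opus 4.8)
The plan is to leverage the already-established K\"ohler duality (Proposition~\cite[Lem.~3.4]{Koh81}) for $\isf$ and $\pfk$ and lift it to the nuclear setting. Since a nuclear homomorphism $h:A\to B$ is in particular an implicative semilattice homomorphism, and its dual S-morphism $h_*:X_B\to X_A$ is in particular a \kmor{}, the injectivity/surjectivity statements at the level of underlying maps are inherited verbatim from the non-nuclear case: $h$ one-to-one iff $h_*$ onto, and $h$ onto iff $h_*$ total and one-to-one. So the only real content to verify is that these equivalences survive the extra structure, and in the case of (2) that the S-morphism $h_*$ corresponding to an onto $h$ is not merely a total \kmor{} but a \emph{total} one in the sense that its domain is all of $X_B$ --- but this is exactly what totality of the \kmor{} means, so nothing new is needed there either.

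\begin{proof}
Since a nuclear homomorphism is in particular an implicative semilattice homomorphism and its dual S-morphism is in particular its dual \kmor{} (with the same underlying partial function), both claims follow at the level of underlying maps from the corresponding result for $\isf$ and $\pfk$ recalled from \cite[Lem.~3.4]{Koh81}.

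For (1), by K\"ohler duality $h$ is one-to-one in $\isf$ iff $h_*$ is onto in $\pfk$. Since the underlying partial functions of $h$ (as a nuclear homomorphism) and of $h_*$ (as an S-morphism) coincide with those in $\isf$ and $\pfk$, and injectivity and surjectivity depend only on the underlying maps, we get that $h$ is one-to-one iff $h_*$ is onto.

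For (2), again by K\"ohler duality $h$ is onto in $\isf$ iff $h_*$ is total and one-to-one in $\pfk$. As the underlying maps agree, $h$ is onto iff $h_*$ is total and one-to-one as an S-morphism.
\end{proof}

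The main (and essentially only) subtlety to flag is making sure that the equivalences genuinely reduce to the non-nuclear case without any hidden obstruction from the nucleus; concretely, one should confirm that the passage $h\mapsto h_*$ used in the $\nisf$/$\spf$ duality of Theorem~\ref{thm:duality for nisf} is literally the same partial map as the one used in K\"ohler duality (which it is, by construction in Section~\ref{sec: finite duality nuclear implicative semilattices}), so that no separate argument about the subsets $S_j$ and $S_k$ is required here. That observation is immediate from the definitions of the two pairs of functors, so there is no real obstacle --- the proposition is a clean corollary of the duality together with the classical fact for implicative semilattices.
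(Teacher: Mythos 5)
Your proposal is correct and matches the paper's own treatment: the paper simply states that K\"ohler's result (\cite[Lem.~3.4]{Koh81}) for $\isf$ and $\pfk$ ``directly generalizes'' to $\nisf$ and $\spf$, giving no further argument. Your additional observation --- that the functor $(\;)_*:\nisf\to\spf$ assigns to $h$ the \emph{same} underlying partial map as the K\"ohler functor $(\;)_*:\isf\to\pfk$, and that one-to-one, onto, and total are properties of that underlying map alone --- is exactly the justification the paper leaves implicit.
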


Since images of total and one-to-one S-morphisms are upsets of the target, as an immediate consequence of Proposition~\ref{prop:one-to-one and onto dual correspondence}(2), we obtain:

\begin{proposition}
Let $(A,j)$ be a finite nuclear implicative semilattice. Homomorphic images of $(A,j)$ dually correspond to upsets of $X_A$.
\end{proposition}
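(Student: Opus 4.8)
The plan is to obtain this directly from Theorem~\ref{thm:duality for nisf} and Proposition~\ref{prop:one-to-one and onto dual correspondence}(2). Up to isomorphism, a homomorphic image of $(A,j)$ is the codomain of a surjective nuclear homomorphism $q\colon(A,j)\twoheadrightarrow(B,k)$, and by Proposition~\ref{prop:hom images-filters in nis} two such represent the same image precisely when they have the same kernel filter; under the duality this says that homomorphic images of $(A,j)$ are classified by the subobjects of $\mathfrak A_*=(X_A,S_j)$ in $\spf$. By Proposition~\ref{prop:one-to-one and onto dual correspondence}(2), $q$ is onto iff $q_*\colon\mathfrak B_*\to\mathfrak A_*$ is a total one-to-one S-morphism. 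So it suffices to show that, up to isomorphism of the source in $\spf$, total one-to-one S-morphisms into $(X_A,S_j)$ are exactly the inclusions of upsets of $X_A$ equipped with the induced S-poset structure.

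First I would show the image of such a morphism is always an upset. If $f\colon(Y,T)\to(X_A,S_j)$ is a total \kmor{}, then $f(\up y)=\up f(y)$ for every $y\in Y$ by Remark~\ref{rem:p-morphism}(2), so $U:=f(Y)=\bigcup_{y\in Y}\up f(y)$ is an upset of $X_A$. Next, $f$ restricts to a poset isomorphism $Y\to U$: it is total, injective, and order-preserving, and it is order-reflecting because $f(y)<f(y')$ forces, by Definition~\ref{def:kmor}(2) and injectivity, some $z>y$ with $f(z)=f(y')$, hence $z=y'$ and $y<y'$. Finally, since $f$ is total, Remark~\ref{rem:total S-morp conditions} gives $f^{-1}(S_j)=T$, whence $f(T)=S_j\cap U$; by Remark~\ref{rem:iso in spf}, $f$ is then an isomorphism in $\spf$ between $(Y,T)$ and $(U,S_j\cap U)$. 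Conversely, for any upset $U$ of $X_A$ I would check that the inclusion $\iota\colon(U,S_j\cap U)\hookrightarrow(X_A,S_j)$ is a total one-to-one S-morphism: Definition~\ref{def:kmor}(1) holds because $U$ carries the induced order, Definition~\ref{def:kmor}(2) holds because $U$ is an upset, and $\iota^{-1}(S_j)=S_j\cap U$, so $\iota$ is an S-morphism by Remark~\ref{rem:total S-morp conditions}. These two assignments are mutually inverse up to isomorphism, giving the claimed bijection between homomorphic images of $(A,j)$ and upsets of $X_A$; chasing it through the duality moreover shows it is inclusion-reversing.

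I do not expect any single computation above to be the obstacle: each verification is a one-line check. The point requiring care is the bookkeeping, namely unwinding precisely what ``homomorphic images dually correspond to upsets'' should mean (iso classes of surjections, equivalently subobjects of $\mathfrak A_*$ in $\spf$, rather than literal subsets of $X_A$), and confirming that an upset $U$ inherits exactly the S-structure $S_j\cap U$ that makes the inclusion an S-morphism. Once the statement is read this way, the content is just the familiar fact that total injective p-morphisms have upset images, transported to the K\"ohler/S-morphism setting, together with Proposition~\ref{prop:one-to-one and onto dual correspondence}(2).
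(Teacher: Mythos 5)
Your proposal is correct and follows exactly the route the paper takes: the paper derives the proposition as an immediate consequence of Proposition~\ref{prop:one-to-one and onto dual correspondence}(2) together with the observation that images of total one-to-one S-morphisms are upsets of the target. You have simply supplied the routine verifications (that such an image is an upset, that the restriction is an isomorphism onto $(U, S_j\cap U)$ in $\spf$, and the converse for inclusions of upsets) which the paper leaves implicit.
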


By Proposition~\ref{prop:one-to-one and onto dual correspondence}(1), nuclear subalgebras of a finite nuclear implicative semilattice dually correspond to onto S-morphisms. Each such gives rise to a partial equivalence relation. To characterize these, we recall
that Heyting subalgebras of a finite Heyting algebra $A$ dually correspond to correct partitions of $X_A$.

\begin{definition}
A \emph{correct partition} of a poset $(X,\le)$ is an equivalence relation $\sim$ on $X$ such that $x \sim y$ and $y \le z$ imply that there is $w\in X$ such that $x \le w$ and $w \sim z$.
\end{definition}

\begin{definition}
Let $(X,\le)$ be a poset. A \textit{$($strict$)$ partial correct partition} of $X$ is an equivalence relation $\sim$ on $D \subseteq X$
(which we call the domain of $\sim$)
such that
\begin{enumerate}
\item all equivalence classes of $\sim$ are antichains;
\item $x \sim y$, $y < z$, and $z \in D$ imply that there is $w \in D$ such that $x < w$ and $w \sim z$.
\end{enumerate}
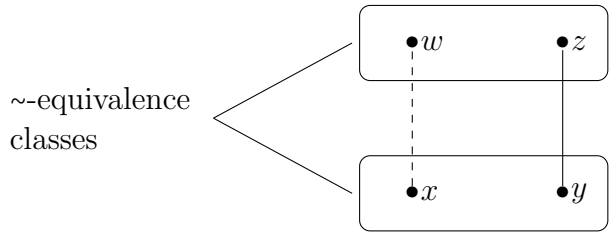
\begin{figure}[H]
\begin{tikzpicture}[>=stealth,inner sep=.0em]
\node[label=right:$w$] (w) at (0,0) {$\bullet$};
\node[label=right:$z$] (z) at (2,0) {$\bullet$};
\node[label=right:$y$] (y) at (2,-2) {$\bullet$};
\node[label=right:$x$] (x) at (0,-2) {$\bullet$};
\draw[dashed] (x) -- (w);
\draw (y) -- (z);
\draw[rounded corners] (-0.7,-2.5) rectangle (2.6,-1.5);
\draw[rounded corners] (-0.7,-0.5) rectangle (2.6,.5);
\node[text width=2.7cm] (stext) at (-4,-1) {$\sim$-equivalence classes};
\draw (stext.east) -- (-0.8,0);
\draw (stext.east) -- (-0.8,-2);
\node at (7,0) {$ $};
\end{tikzpicture}
\caption{The second condition in the definition of a partial correct partition.}\label{fig:partial correct partition}
\end{figure}
\end{definition}

\begin{proposition} \label{prop:subalgebras of U(X) and correct partitions}
Let $(X,\le)$ be a finite poset. Subalgebras of $\upsets(X)$ dually correspond to partial correct partitions of $X$.
\end{proposition}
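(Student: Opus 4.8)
The plan is to route everything through K\"ohler duality. A subalgebra of $\upsets(X)$ is, up to isomorphism, the image of a one-to-one homomorphism $h\colon B\to\upsets(X)$ in $\isf$, and by \cite[Lem.~3.4]{Koh81} the dual \kmor{} $h_*\colon X\to B_*$ (identifying $X$ with $X_{\upsets(X)}$ via $\varepsilon_X$) is onto; moreover two one-to-one homomorphisms into $\upsets(X)$ have the same image exactly when their dual onto \kmor{}s differ by a poset isomorphism of the codomains commuting with the maps. Thus it suffices to set up a bijection between onto \kmor{}s out of $X$, taken up to such isomorphism, and partial correct partitions of $X$.

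From an onto \kmor{} $f\colon X\to Y$ with domain $D$, define $\sim_f$ on $D$ by $x\sim_f x'$ iff $f(x)=f(x')$. This is an equivalence relation with domain $D$, and I claim it is a partial correct partition. Condition~(1) is immediate: if $x\sim_f x'$ and $x<x'$, then Definition~\ref{def:kmor}(1) forces $f(x)<f(x')$, a contradiction, so each class is an antichain. For condition~(2), suppose $x\sim_f y$, $y<z$, and $z\in D$; then $f(y)<f(z)$ by Definition~\ref{def:kmor}(1), hence $f(x)=f(y)<f(z)$, and Definition~\ref{def:kmor}(2) yields $w\in D$ with $x<w$ and $f(w)=f(z)$, i.e.\ $w\sim_f z$. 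Isomorphic onto \kmor{}s clearly induce the same partition.

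Conversely, from a partial correct partition $\sim$ on $D\subseteq X$, put $Y=D/{\sim}$, declare $C\le C'$ iff there are $c\in C$ and $c'\in C'$ with $c\le c'$, and let $f_\sim\colon X\to Y$ be the partial surjection with domain $D$ sending each $x$ to its class. One first checks $(Y,\le)$ is a poset: reflexivity is clear, and transitivity and antisymmetry follow by using correctness to lift a witness into the other class and then the antichain condition to collapse the resulting strict comparability. Granting this, $f_\sim$ is an onto \kmor{}: Definition~\ref{def:kmor}(1) holds since $x<x'$ in $D$ witnesses $[x]\le[x']$ while the antichain condition rules out $[x]=[x']$; and Definition~\ref{def:kmor}(2) holds since $[x]<C$ provides $a\in[x]$ and $b\in C$ with $a<b$, whence correctness applied to $a\sim x$ and $a<b$ gives $w\in D$ with $x<w$ and $w\sim b$, so $f_\sim(w)=C$. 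Finally, $\sim\mapsto f_\sim$ and $f\mapsto\sim_f$ are mutually inverse: $\sim_{f_\sim}=\sim$ is immediate, and the bijection $[x]\mapsto f(x)$ exhibits $f_{\sim_f}$ as isomorphic to $f$ over $X$, its order-preservation in both directions again relying on the two clauses of Definition~\ref{def:kmor}.

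The main obstacle is the verification in the third step that the quotient relation on $D/{\sim}$ is genuinely a partial order; antisymmetry is the subtle point, and it is exactly here that the antichain clause of a partial correct partition is needed, complementing the lifting clause which supplies the \kmor{} conditions. The remaining steps are a routine translation through K\"ohler duality, and the result parallels the classical correspondence between Heyting subalgebras of $\upsets(X)$ and correct partitions of $X$, the antichain requirement being the extra ingredient that accounts for implicative semilattice (rather than Heyting) homomorphisms.
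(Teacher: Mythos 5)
Your proof is correct and takes essentially the same route as the paper: both reduce via Proposition~\ref{prop:one-to-one and onto dual correspondence}(1) to onto \kmor{}s out of $X$ and then identify these (up to isomorphism over $X$) with partial correct partitions. You simply spell out the verifications — in particular that the quotient order is antisymmetric and that the quotient map satisfies both clauses of Definition~\ref{def:kmor} — which the paper leaves implicit.
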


\begin{proof}
By Proposition~\ref{prop:one-to-one and onto dual correspondence}(1), subalgebras of $\upsets(X)$ dually correspond to onto \kmor{}s on $X_{\upsets(X)}$. These correspond to partial correct partitions of $X_{\upsets(X)}$. Indeed, if $\pi:X_{\upsets(X)}\to Y$ is an onto \kmor{}, then $\sim$ given by $x\sim y$ iff $\pi(x)=\pi(y)$ is a partial correct partition of $X_{\upsets(X)}$. Conversely, for each partial correct partition $\sim$ of $X_{\upsets(X)}$, let $Y$ be the quotient of the domain of $\sim$, and let $\pi:X_{\upsets(X)}\to Y$ be the partial quotient map. Then $\pi$ is an onto \kmor{} whose corresponding partial correct partition is $\sim$.
Since $\varepsilon_X:X\to X_{\upsets(X)}$ is an isomorphism, partial correct partitions of $X$ correspond to partial correct partitions of $X_{\upsets(X)}$. Thus, subalgebras of $\upsets(X)$ dually correspond to partial correct partitions of $X$.
\end{proof}

Proposition~\ref{prop:subalgebras of U(X) and correct partitions} together with Lemmas~\ref{prop:S-morph} and~\ref{lem:S-morph} yields the following dual characterization of nuclear subalgebras.
We say that a subset $U$ of the domain $D$ of a partial correct partition $\sim$ is \textit{saturated} provided $x \in U$ and $x \sim y$ imply $y \in U$.

\begin{proposition} \label{prop:dual sub}
Let $(X,S)$ be a finite S-poset. Subalgebras of $(\upsets(X), j_S)$ dually correspond to partial correct partitions $\sim$ of $X$ with domain $D$ such that
\begin{enumerate}
\item $S \cap D$ is saturated,
\item if $s \in S$, $d \in D$, and $s \le d$, then there are $s' \in S \cap D$ and $d' \in D$ such that $s\le s' \le d'$ and $d \sim d'$.
\end{enumerate}
\end{proposition}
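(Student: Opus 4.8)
The plan is to reduce the statement to the combination of Proposition~\ref{prop:one-to-one and onto dual correspondence}(1), the S-poset isomorphism $\varepsilon_X$ of Lemma~\ref{lem:epsilon S-iso}, the description of subalgebras of $\upsets(X)$ via partial correct partitions in Proposition~\ref{prop:subalgebras of U(X) and correct partitions}, and the explicit S-morphism conditions of Lemmas~\ref{prop:S-morph} and~\ref{lem:S-morph}. Concretely: a (nuclear) subalgebra $B$ of $(\upsets(X),j_S)$ is the image of a one-to-one nuclear homomorphism; by Theorem~\ref{thm:duality for nisf} and Proposition~\ref{prop:one-to-one and onto dual correspondence}(1) such inclusions dualize, via $\varepsilon_X$, to onto S-morphisms $\pi\colon (X,S)\to(Y,T)$ out of $(X,S)$, two of them giving the same subalgebra exactly when they differ by post-composition with an isomorphism. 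Forgetting the target $S$-structure, an onto S-morphism is in particular an onto \kmor{}, which by (the proof of) Proposition~\ref{prop:subalgebras of U(X) and correct partitions} is the same datum as a partial correct partition $\sim$ of $X$ with domain $D$, via $Y=D/{\sim}$ and $\pi$ the partial quotient map, so that $x\sim y\iff\pi(x)=\pi(y)$. Thus everything comes down to identifying exactly which partial correct partitions arise from S-morphisms, and with which target $S$-structure.

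First I would do the forward direction. Suppose $\pi\colon(X,S)\to(Y,T)$ is an onto S-morphism with domain $D$ and let $\sim$ be the induced partial correct partition. By Lemma~\ref{lem:S-morph}(1) we have $\pi^{-1}(T)=D\cap S$. Since $\pi$ maps $D$ onto $Y$, this identity forces $T=\pi(D\cap S)$: if $t\in T$ pick $x\in D$ with $\pi(x)=t$, then $x\in\pi^{-1}(T)=D\cap S$ so $t\in\pi(D\cap S)$, and the reverse inclusion is immediate. Moreover $\pi^{-1}(T)=\pi^{-1}(\pi(D\cap S))=\{\,y\in D\mid\exists x\in D\cap S,\ y\sim x\,\}$, so the equality $\pi^{-1}(T)=D\cap S$ is equivalent to the implication ``$y\in D$, $x\in D\cap S$, $y\sim x$ imply $y\in S$'', that is, $D\cap S$ is saturated, which is clause (1). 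Clause (2) is simply Lemma~\ref{lem:S-morph}(2) rewritten using $\pi(d)=\pi(d')\iff d\sim d'$.

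Then I would do the converse. Given a partial correct partition $\sim$ of $X$ with domain $D$ satisfying (1) and (2), set $Y=D/{\sim}$ and let $\pi\colon X\to Y$ be the partial quotient map, which is an onto \kmor{} by Proposition~\ref{prop:subalgebras of U(X) and correct partitions}; put $T=\pi(D\cap S)$. Saturation of $D\cap S$ gives $\pi^{-1}(T)=D\cap S$, which is Lemma~\ref{lem:S-morph}(1), while clause (2) is exactly Lemma~\ref{lem:S-morph}(2). Hence $\pi\colon(X,S)\to(Y,T)$ is an onto S-morphism, and by Proposition~\ref{prop:one-to-one and onto dual correspondence}(1) and Theorem~\ref{thm:duality for nisf} its dual $\pi^*$ embeds $(\upsets(Y),j_T)$ into $(\upsets(X),j_S)$ as a nuclear subalgebra. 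Since in the forward direction $T$ was forced to equal $\pi(D\cap S)$, the two assignments are mutually inverse modulo the identification of onto \kmor{}s with partial correct partitions already established in Proposition~\ref{prop:subalgebras of U(X) and correct partitions}, completing the proof.

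The only genuinely delicate point is bookkeeping rather than mathematics: one must observe that the target $S$-structure on $Y$ is not free to choose but is pinned down to $\pi(D\cap S)$, and that the mere existence of an $S$-structure making $\pi$ an S-morphism is equivalent to the saturation of $D\cap S$ --- this is precisely why clause (1) appears as a separate condition instead of being absorbed into (2). Beyond that, one should keep the identification $(X,S)\cong(X_{\upsets(X)},S_{j_S})$ from Lemma~\ref{lem:epsilon S-iso} in mind throughout so that the source of all the S-morphisms in question is literally $(X,S)$, and note that distinct partial correct partitions still give distinct subalgebras exactly as in Proposition~\ref{prop:subalgebras of U(X) and correct partitions}, so the extra clauses (1)--(2) only cut down the class of partitions without affecting injectivity of the correspondence.
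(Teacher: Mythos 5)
Your proof is correct and follows exactly the route the paper intends: the paper states this proposition without a written proof, asserting only that it follows from Proposition~\ref{prop:subalgebras of U(X) and correct partitions} together with Lemmas~\ref{prop:S-morph} and~\ref{lem:S-morph}, which is precisely the reduction you carry out. Your bookkeeping --- that the target structure is forced to be $T=\pi(D\cap S)$ and that Lemma~\ref{lem:S-morph}(1) then translates into saturation of $D\cap S$ while Lemma~\ref{lem:S-morph}(2) becomes clause (2) verbatim --- correctly fills in the details the paper leaves implicit.
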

\begin{figure}[H]
\begin{tikzpicture}[>=stealth,inner sep=.0em,execute at end picture=
{\begin{pgfonlayer}{background}
\fill[gray!60!white,rounded corners] (.5,-1.3) rectangle (2,-.7);
\end{pgfonlayer}
}]
\node[label=right:$d$] (d) at (0,0) {$\circ$};
\node[label=right:$d'$] (d') at (1,0) {$\circ$};
\node[label=right:$s'$] (s') at (1,-1) {$\bullet$};
\node[label=right:$s$] (s) at (0,-2) {$\bullet$};
\node[text width=3.2cm] (nostext) at (-5,.5) {$\sim$-equivalence classes outside $S$};
\node[text width=2.7cm] (stext) at (5,0) {$\sim$-equivalence classes in $S$};
\draw (s) -- (d);
\draw[dashed] (s) -- (s');
\draw[dashed] (s') -- (d');
\draw[rounded corners] (-1,-.3) rectangle (1.5,.3);
\draw[rounded corners] (-2.4,-1.1) rectangle (-.2,-.5);
\draw (nostext.south east) -- (-1.2,0);
\draw (nostext.south east) -- (-2.5,-.4);
\draw (stext.west) -- (2.2,-1);
\draw (stext.west) -- (2.4,.7);
\fill[gray!60!white,rounded corners] (-1.2,.5) rectangle (2.2,1.1);
\end{tikzpicture}
\caption{Second condition in the definition of a nuclear partial correct partition.}\label{fig:nuclear partial correct partition}
\end{figure}

\begin{definition}
We call the partial correct partitions satisfying the two conditions of Proposition~\ref{prop:dual sub} \textit{nuclear partial correct partitions}.
\end{definition}

We next provide a decomposition of homomorphisms of finite implicative semilattices, and show that the corresponding decomposition does not hold for finite nuclear implicative semilattices.

\begin{definition} \label{def:strict Heyting subalg}
Let $(X,\le)$ be a finite poset. We say that a partial correct partition of $X$ is \textit{total} if its domain is $X$.
We call a subalgebra $\mathfrak{B}$ of $\upsets(X)$ a \textit{strict Heyting subalgebra} if it corresponds to a total correct partition of $X$.
\end{definition}

\begin{remark}
Let $(X,\le)$ be a finite poset, $\mathfrak{B}$ a strict Heyting subalgebra of $\upsets(X)$, and $\sim$ the corresponding total correct partition of $X$. If $\pi:X\to X/{\sim}$ is the corresponding quotient map, then $\pi^*=\pi^{-1}$, so $\pi^*:\upsets(X/{\sim})\to\upsets(X)$ is a Heyting algebra
embedding.
Since $\mathfrak B$ is isomorphic to $\upsets(X/{\sim})$, each strict Heyting subalgebra $\mathfrak B$ of $\upsets(X)$ is a Heyting subalgebra of $\upsets(X)$. We call it strict because the corresponding p-morphism is strict in that $x<y$ implies $\pi(x)<\pi(y)$.
\end{remark}

\begin{lemma}\label{lem:total and strict hey dually}
Let $(X,\le)$ be a finite poset, $\mathfrak B$ a subalgebra of $\upsets(X)$, $\sim$ the corresponding partial correct partition of $X$ with domain $D$, and $j$ the nucleus on $\upsets(X)$ induced by $\mathfrak B$.
\begin{enumerate}
\item $\mathfrak{B}$ is a total subalgebra of $\upsets(X)$ iff $\sim$ is the identity on its domain,
\item $\mathfrak{B}$ is a strict Heyting subalgebra of $\upsets(X)$ iff $j$ is the identity nucleus on $\upsets(X)$,
\item $\mathfrak{B}$ is a strict Heyting subalgebra of $\upsets(X)_j$.
\end{enumerate}
\end{lemma}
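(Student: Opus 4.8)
The plan is to first translate the domain $D$ of $\sim$ into algebraic terms, after which all three items follow quickly from Proposition~\ref{prop:nucleus induced by subalgebra}, Lemma~\ref{lem:S_j}, and Definition~\ref{def:strict Heyting subalg}. Applying the functor $(\;)_*$ to the inclusion $\iota\colon\mathfrak B\hookrightarrow\upsets(X)$, the nucleus on $\upsets(X)$ induced by $\iota(\mathfrak B)=\mathfrak B$ is precisely $j$, so by the construction of $(\;)_*$ on morphisms the \kmor{} $\iota_*\colon X_{\upsets(X)}\to X_{\mathfrak B}$ has domain $S_j=X_{\upsets(X)}\cap\upsets(X)_j$. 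Transporting along the poset isomorphism $\varepsilon_X$ shows $D=S_j$; concretely, $x\in D$ iff the meet-prime $X\setminus\dn x$ of $\upsets(X)$ is a $j$-fixpoint. I would isolate this as a claim, since all three items rest on it.

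For item (2): by Definition~\ref{def:strict Heyting subalg}, $\mathfrak B$ is a strict Heyting subalgebra iff $D=X$, i.e.\ iff $S_j=X_{\upsets(X)}$, i.e.\ iff every meet-prime of $\upsets(X)$ is a $j$-fixpoint. This last condition is equivalent to $j$ being the identity nucleus: one implication is trivial, and for the other, each $U\in\upsets(X)$ is the meet of its meet-prime components, so if $j$ fixes all meet-primes then $jU=U$ because $j$ preserves finite meets.

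For item (1): I would first observe that the total subalgebra $A_j=\upsets(X)_j$ has associated partial correct partition equal to the identity on $D$. Indeed, if $j'$ is the nucleus induced by $A_j$, then $\upsets(X)_{j'}=A_{j'}=A_j=\upsets(X)_j$ by Proposition~\ref{prop:nucleus induced by subalgebra}(2c), so (again unwinding $(\;)_*$ for the inclusion $A_j\hookrightarrow\upsets(X)$) its domain is $S_{j'}=S_j=D$; and on $D$ the \kmor{} $(A_j\hookrightarrow\upsets(X))_*$ sends a meet-prime $y$ lying in $A_j$ to $\bigvee\{a\in A_j\mid a\le y\}=y$, hence is the identity. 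Now if $\mathfrak B$ is a total subalgebra, then $\mathfrak B=A_j$ by Proposition~\ref{prop:nucleus induced by subalgebra}(2c), so $\sim$ is the identity on $D$; conversely, if $\sim$ is the identity on $D$, then $\mathfrak B$ and $A_j$ determine the same partial correct partition, hence $\mathfrak B=A_j$ since the correspondence of Proposition~\ref{prop:subalgebras of U(X) and correct partitions} is a bijection, and so $\mathfrak B$ is total by Proposition~\ref{prop:nucleus induced by subalgebra}(1).

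For item (3): by Proposition~\ref{prop:nucleus induced by subalgebra}(1), $A_j=\upsets(X)_j$ is a total subalgebra of $\upsets(X)$, so $\meet$ and $\to$ in $A_j$ are the restrictions of those in $\upsets(X)$, and, being finite, $A_j$ is a finite Heyting algebra of which $\mathfrak B$ is a subalgebra by Proposition~\ref{prop:nucleus induced by subalgebra}(2b). The nucleus on $A_j$ induced by $\mathfrak B$ is $\bigwedge\{w_b\mid b\in\mathfrak B\}$ computed in $A_j$, and since $\to$ and $\meet$ in $A_j$ restrict from $\upsets(X)$ this equals $j$ restricted to $A_j$, i.e.\ the identity. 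Applying item~(2) with $A_j$ in place of $\upsets(X)$ then gives that $\mathfrak B$ is a strict Heyting subalgebra of $\upsets(X)_j$. I expect the main obstacle to be the opening step — pinning down $D=S_j$ through the explicit construction of $(\;)_*$ and $\varepsilon_X$ — together with the bookkeeping in item~(1) that $A_j$'s partition is the identity on $D$; once these are secured, the rest is routine.
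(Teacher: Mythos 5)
Your proof is correct and follows essentially the same route as the paper: identify the domain $D$ with $S_j$ via $\varepsilon_X$, characterize totality through $\mathfrak B=\upsets(X)_j$ and the partition being the identity, characterize strict Heyting via $S_j=X_{\upsets(X)}$ forcing $j=\mathrm{id}$, and deduce (3) by applying (2) to $\upsets(X)_j$. You spell out a few steps the paper leaves terse (e.g.\ that the partition of $\upsets(X)_j$ is the identity on $D$, and why fixing all meet-primes forces $j$ to be the identity), but the argument is the same.
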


\begin{proof}
(1) Let $i:\mathfrak B\to\upsets(X)$ be the embedding, and let $i_*:X_{\upsets(X)}\to X_\mathfrak B$ be its dual. Then the domain of $i_*$ is $S_j$. By Proposition~\ref{prop:nucleus induced by subalgebra}, $\mathfrak{B}$ is a total subalgebra iff $\mathfrak B=\upsets(X)_j$. We have that $\mathfrak B=\upsets(X)_j$ iff $i_*$ is the identity on $S_j$ which is equivalent to $\sim$ being the identity on $D$ by Proposition~\ref{prop:subalgebras of U(X) and correct partitions}.

(2) $\mathfrak{B}$ is a strict Heyting subalgebra of $\upsets(X)$ iff $D=X$. By Proposition~\ref{prop:subalgebras of U(X) and correct partitions}, $\varepsilon_X(D)=S_j$. Therefore, $D=X$ iff $\varepsilon_X(D)=\varepsilon_X(X)$, which happens iff $S_j=X_{\upsets(X)}$. We have that $S_j=X_{\upsets(X)}$ iff $X_{\upsets(X)} \subseteq \upsets(X)_j$, which is equivalent to $j$ being the identity on $\upsets(X)$ because $\upsets(X)$ is generated by $X_{\upsets(X)}$.

(3) The nucleus on $\upsets(X)_j$ induced by $\mathfrak{B}$ is the restriction of $j$ to $\upsets(X)_j$, so it is the identity on $\upsets(X)_j$. Therefore, by (2), $\mathfrak{B}$ is a strict Heyting subalgebra of $\upsets(X)_j$.
\end{proof}

\begin{remark}
We can restate Lemma~\ref{lem:total and strict hey dually} as follows:

Let $A$ be a finite implicative semilattice, $B$ a subalgebra of $A$, $\sim$ the corresponding partial correct partition of $X_A$,
and $j$ the nucleus on $A$ induced by $B$.
\begin{enumerate}
\item $B$ is a total subalgebra of $A$ iff $\sim$ is the identity on its domain,
\item $B$ is a strict Heyting subalgebra of $A$ iff $j$ is the identity nucleus on $A$,
\item $B$ is a strict Heyting subalgebra of $A_j$.
\end{enumerate}
\end{remark}

From these characterizations of total subalgebras and strict Heyting subalgebras we obtain the following decomposition of every morphism in $\isf$ and every morphism in $\pfk$.

\begin{proposition}\label{prop:decomposition}
Let $h:A \to B$ be an implicative semilattice homomorphism between finite implicative semilattices, and let $j$ be the nucleus on $B$ induced by its subalgebra $h(A)$. Then $h$ can be written as the composition of the homomorphisms:
\begin{enumerate}
\item the onto homomorphism $h_1:A \to h(A)$ obtained by restricting the codomain of $h$,
\item the inclusion $h_2$ of the strict Heyting subalgebra $h(A)$ into $B_j$, and
\item the inclusion $h_3$ of the total subalgebra $B_j$ into $B$.
\end{enumerate}
\[
\begin{tikzcd}[column sep=5pc]
A \arrow[d, "h_1"'] \arrow[r, "h"] & B \\
h(A) \arrow[r, "h_2"'] & B_j \arrow[u, "h_3"']
\end{tikzcd}
\]
Let $f:X \to Y$ be a \kmor{} between finite posets with domain $D \subseteq X$. Then $f$ can be written as the composition of the \kmor{}s:
\begin{enumerate}
\item the onto \kmor{} $f_1:X \to D$ which is the identity on its domain $D$,
\item the total onto \kmor{} $f_2:D \to f(X)$ obtained by restricting the codomain of $f$,
\item the inclusion $f_3$ of $f(X)$ into $Y$ as an upset.
\end{enumerate}
\[
\begin{tikzcd}[column sep=5pc]
X \arrow[d, "f_1"'] \arrow[r, "f"] & Y \\
D \arrow[r, "f_2"'] & f(X) \arrow[u, "f_3"']
\end{tikzcd}
\]
\end{proposition}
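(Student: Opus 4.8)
The plan is to treat the two factorizations separately, and in each case first to recognize the three pieces as morphisms of the relevant category using results already established, and then to check that the triple composite is literally the original morphism.

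For the implicative semilattice homomorphism $h:A\to B$, I would first note that $h_1:A\to h(A)$, obtained by corestricting $h$ to its image, is an onto homomorphism, since $h(A)$ is a subalgebra of $B$ with the operations inherited from $B$. By Proposition~\ref{prop:nucleus induced by subalgebra}(2a), the subalgebra $h(A)$ induces the nucleus $j=\bigwedge\{w_{h(a)}\mid a\in A\}$ on $B$, and by Proposition~\ref{prop:nucleus induced by subalgebra}(1) the set $B_j$ of fixpoints of $j$ is a total subalgebra of $B$; hence $h_3:B_j\hookrightarrow B$ is the inclusion of a total subalgebra, and in particular a homomorphism. Proposition~\ref{prop:nucleus induced by subalgebra}(2b) gives $h(A)\subseteq B_j$, and since $B_j$ is a total subalgebra of $B$ its implicative semilattice operations are those inherited from $B$, so $h(A)$ --- being a subalgebra of $B$ contained in $B_j$ --- is a subalgebra of $B_j$; thus $h_2:h(A)\hookrightarrow B_j$ is a homomorphism. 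That $h(A)$ is in fact a \emph{strict} Heyting subalgebra of $B_j$ is precisely the content of Lemma~\ref{lem:total and strict hey dually}(3), in the form of the Remark following it, applied with $B$ and $h(A)$ in place of $A$ and $B$. Since $h_1$ sends $a$ to $h(a)$ while $h_2$ and $h_3$ are inclusions, the composite $h_3h_2h_1$ equals $h$, which is exactly the commutativity of the first diagram.

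For the \kmor{} $f:X\to Y$ with domain $D$, I would verify directly that $f_1$, $f_2$, $f_3$ are \kmor{}s. Condition (1) of Definition~\ref{def:kmor} is immediate for all three, since $f_1$ and $f_3$ are partial inclusions and $f_2$ is a corestriction of $f$. For condition (2): for $f_1:X\to D$ it holds because $D$ is itself the domain of $f_1$ (given $f_1(x)=x<y$ with $y\in D$, take $z=y$); for $f_3:f(X)\hookrightarrow Y$ it holds because $f(X)$ is an upset of $Y$ by Remark~\ref{rem:p-morphism}(2); and for the total onto corestriction $f_2:D\to f(X)$ it is inherited from condition (2) for $f$, since the witness $z\in D$ produced by $f$ satisfies $f_2(z)=f(z)$. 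To see that $f_3f_2f_1=f$, I would use that the composite of two \kmor{}s has domain the preimage of the second domain: as $f_1$ is the identity on $D$ and $f_2$ is total on $D$, the composite $f_2f_1$ has domain $D$ and equals $f$ there; as $f_3$ is total on $f(X)$ and $f(x)\in f(X)$ for $x\in D$, the composite $f_3f_2f_1$ again has domain $D$ and sends $x\in D$ to $f_3(f(x))=f(x)$. Alternatively, this second half can be deduced from the first by applying the functor $(\;)_*$ together with the naturality of the isomorphisms $\alpha$ and $\varepsilon$, but the direct argument seems shorter.

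Both halves amount to quoting earlier results and bookkeeping with partial maps, so I do not anticipate a genuinely hard step. The place that requires the most care is the domain calculus for composites of \kmor{}s in the second half: one must check that it is exactly $D$, and not some smaller set, that survives all three compositions, which is why it matters that $f_1$ is the identity on $D$ and not merely a \kmor{} with domain $D$.
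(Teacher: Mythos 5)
Your proof is correct and follows the same route the paper intends: the paper states this proposition as an immediate consequence of Proposition~\ref{prop:nucleus induced by subalgebra} and Lemma~\ref{lem:total and strict hey dually} (in its restated form), and you supply exactly those citations plus the routine verifications that the three pieces are morphisms and that the composites, including the domain bookkeeping for partial maps, recover $h$ and $f$.
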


\begin{definition}
Let $\mathfrak{A}$ be a finite nuclear implicative semilattice and let $\mathfrak{B}$ be a nuclear subalgebra of $\mathfrak{A}$.
\begin{enumerate}
\item We call $\mathfrak{B}$ a \textit{total nuclear subalgebra} of $\mathfrak{A}$ if $\mathfrak{B}$ is a total subalgebra of $\mathfrak{A}$.
\item We call $\mathfrak{B}$ a \textit{strict Heyting nuclear subalgebra} of $\mathfrak{A}$ if $\mathfrak{B}$ is a strict Heyting subalgebra of $\mathfrak{A}$.
\end{enumerate}
\end{definition}

One would expect that Proposition~\ref{prop:decomposition} generalizes to the nuclear setting. The next example shows that this is not so.

\begin{example}
Let $\mathfrak{X}=(X,S)$ and $\mathfrak{Y}=(Y,T)$ be the S-posets shown on the left and right of Figure~\ref{fig:counterexample decomposition}. Let $f:\mathfrak{X} \to \mathfrak{Y}$ be the onto S-morphism with domain $D$ whose decomposition is shown in Figure~\ref{fig:counterexample decomposition}. Since $f$ is onto, $f_3$ is the total identity map, so we can ignore $f_3$.
The \kmor{} $f_1:X \to D$ cannot be an S-morphism for any S-poset structure that we put on $D$ because it does not satisfy the second condition of Lemma~\ref{lem:S-morph}. Equivalently, the partial correct partition of $X$ corresponding to $f_1$ is not a nuclear partial correct partition of $\mathfrak{X}$ since it does not satisfy the second condition of Proposition~\ref{prop:dual sub}. Dually this means that if $\mathfrak{A}=(A,j)$ and $\mathfrak{B}=(B,k)$ are finite nuclear implicative semilattices, $h:\mathfrak{A} \to \mathfrak{B}$ is a homomorphism and $l$ is the nucleus on $B$ induced by $h(A)$, then $B_l$ is not necessarily a nuclear subalgebra of $\mathfrak{B}$.

\begin{figure}[H]
\begin{tikzpicture}[>=stealth,inner sep=.0em,execute at end picture=
{\begin{pgfonlayer}{background}
\fill[gray!60!white,rounded corners] (-0.7,-2.2) rectangle (1.6,-.7);
\end{pgfonlayer}
}]
\node[label=right:$d$] (d) at (0,0) {$\circ$};
\node[label=right:$d'$] (d') at (1,0) {$\circ$};
\node[label=right:$s'$] (s') at (1,-1) {$\bullet$};
\node[label=right:$s$] (s) at (0,-2) {$\bullet$};
\draw (s) -- (d);
\draw (s) -- (s');
\draw (s') -- (d');
\draw[rounded corners] (-0.7,-1.5) rectangle (1.6,.3);
\node at (-1,0.2) {$D$};
\node at (-1,-2) {$S$};
\node at (0.5,1) {$X$};
\draw[->] (2,-0.5) -- (3.8,-0.5);
\node at (3,-0.2) {$f_1$};
\node[label=right:$d$] (d1) at (0+5,0) {$\circ$};
\node[label=right:$d'$] (d1') at (1+5,0) {$\circ$};
\node[label=right:$s'$] (s1') at (1+5,-1) {$\bullet$};
\draw (s1') -- (d1');
\draw[rounded corners] (-0.7+5,-1.5) rectangle (1.6+5,.3);
\node at (0.5+5,1) {$D$};
\draw[->] (2+5,-0.5) -- (3.8+5,-0.5);
\node at (3+5,-0.2) {$f_2$};
{\begin{pgfonlayer}{background}
\fill[gray!60!white,rounded corners] (-0.7+8.5+1.2,-1.5) rectangle (1.6+8.5+0.7,-.7);
\end{pgfonlayer}
}]
\node[label=right:${f(d)=f(d')}$] (d2') at (1+8.5,0) {$\circ$};
\node[label=right:$f(s')$] (s2') at (1+8.5,-1) {$\bullet$};
\draw (s2') -- (d2');
\node at (-1+5+6,-1.8) {$T$};
\node at (1+8.5,1) {$Y$};
\end{tikzpicture}
\caption{S-morphism whose decomposition is not made of S-morphisms.}\label{fig:counterexample decomposition}
\end{figure}
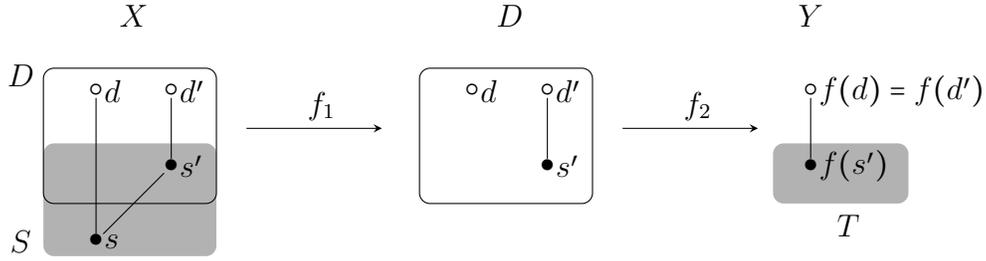
\end{example}

We next characterize dually nuclear subalgebras that are total and strict Heyting.

\begin{proposition}\label{prop:dual total and strict Hey nuclear subalg}
Let $(X,S)$ be a finite S-poset and let $\sim$ be a partial correct partition of $X$ with domain $D$. Then $\sim$ corresponds to a total nuclear subalgebra of $(\upsets(X), j_S)$ iff
\begin{enumerate}
\item $\sim$ is the identity relation on $D$,
\item for all $d \in D$, we have $\max (S \cap \dn d) \subseteq D$.
\end{enumerate}
Moreover, $\sim$ corresponds to a strict Heyting nuclear subalgebra of $(\upsets(X), j_S)$ iff
\begin{enumerate}
\item $D=X$, i.e.~$\sim$ is a total correct partition,
\item $S$ is saturated.
\end{enumerate}
\end{proposition}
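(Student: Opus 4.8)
The plan is to obtain both equivalences by combining the dual description of nuclear subalgebras from Proposition~\ref{prop:dual sub} with the dual descriptions of total and of strict Heyting subalgebras recorded in Lemma~\ref{lem:total and strict hey dually}. Recall that, by Proposition~\ref{prop:dual sub}, a partial correct partition $\sim$ of $X$ with domain $D$ corresponds to a nuclear subalgebra of $(\upsets(X),j_S)$ precisely when (i) $S\cap D$ is saturated and (ii) whenever $s\in S$, $d\in D$ and $s\le d$, there exist $s'\in S\cap D$ and $d'\in D$ with $s\le s'\le d'$ and $d\sim d'$. In each of the two cases I would add to (i) and (ii) the extra condition supplied by Lemma~\ref{lem:total and strict hey dually} and simplify the resulting conjunction.

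For total nuclear subalgebras: by Lemma~\ref{lem:total and strict hey dually}(1), $\mathfrak B$ is a total subalgebra iff $\sim$ is the identity relation on $D$, which is condition~(1). Under this hypothesis, (i) is automatic, since every subset of $D$ is saturated for the identity relation, and in (ii) the requirement $d\sim d'$ forces $d'=d$, so (ii) becomes: for all $s\in S$ and $d\in D$ with $s\le d$ there is $s'\in S\cap D$ with $s\le s'\le d$. I would then verify that this simplified condition is equivalent to condition~(2), i.e.\ $\max(S\cap\dn d)\subseteq D$ for all $d\in D$. Given the simplified condition, if $d\in D$ and $m\in\max(S\cap\dn d)$, then applying it with $s=m$ yields $s'\in S\cap D$ with $m\le s'\le d$; since $s'\in S\cap\dn d$ and $m$ is maximal there, $m=s'\in D$. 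Conversely, given condition~(2), if $s\in S$, $d\in D$ and $s\le d$ then $s\in S\cap\dn d$, so by finiteness of $X$ there is $m\in\max(S\cap\dn d)$ with $s\le m$; condition~(2) gives $m\in D$, and $m\in S$, $m\le d$, so $s'=m$ works.

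For strict Heyting nuclear subalgebras: by Definition~\ref{def:strict Heyting subalg} (equivalently, by the proof of Lemma~\ref{lem:total and strict hey dually}(2)), $\mathfrak B$ is a strict Heyting subalgebra iff $D=X$, which is condition~(1). Under $D=X$, condition (i) reads exactly ``$S=S\cap D$ is saturated,'' which is condition~(2); and condition (ii) holds trivially, taking $s'=s$ and $d'=d$. This yields the desired equivalence.

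I do not anticipate a real obstacle: the argument is essentially bookkeeping once Proposition~\ref{prop:dual sub} and Lemma~\ref{lem:total and strict hey dually} are in hand. The only place needing more than an unwinding of definitions is the equivalence, in the total case, between the simplified form of (ii) and the condition $\max(S\cap\dn d)\subseteq D$; there, the passage from condition~(2) back to (ii) uses finiteness of $X$ to produce maximal elements of $S\cap\dn d$.
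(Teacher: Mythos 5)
Your proposal is correct and follows essentially the same route as the paper: combine the characterization of nuclear partial correct partitions (Proposition~\ref{prop:dual sub}) with Lemma~\ref{lem:total and strict hey dually}, then simplify conditions (i) and (ii) in each of the two special cases, using finiteness of $X$ to pass between the second condition of Proposition~\ref{prop:dual sub} and the statement about $\max(S\cap\dn d)$. The details you supply (saturation being automatic for the identity relation, $d\sim d'$ forcing $d'=d$, and the maximality argument in both directions) match the paper's proof.
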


\begin{proof}
By Lemma~\ref{lem:total and strict hey dually}, $\sim$ corresponds to a total subalgebra iff it is the identity relation on $D$ and it corresponds to a strict Heyting subalgebra iff $D=X$. It only remains to show how the conditions defining nuclear partial correct partitions simplify in these two special cases.

First, suppose that $\sim$ corresponds to a total subalgebra. Then $\sim$ is the identity on $D$. Therefore, $S \cap D$ is always saturated. We show that the second condition in Lemma~\ref{lem:S-morph} simplifies as above. If $\sim$ is a nuclear partial correct partition, $d \in D$ and $s \in \max (S \cap \dn d)$, then there are $s' \in S \cap D$ and $d' \in D$ such that $s\le s' \le d'$ and $d \sim d'$. But $\sim$ is the identity on $D$, so $d=d'$ which gives $s \le s' \in S \cap \dn d$. The maximality of $s$ then implies that $s=s'$, so $s \in D$. Conversely, suppose that $\max (S \cap \dn d) \subseteq D$ for all $d \in D$. Let $s \in S$, $d\in D$, and $s \le d$. Then $s \in S \cap \dn d$. Since $X$ is finite, there is $s' \in \max(S \cap \dn d)$ such that $s \le s'$. By our assumption, $s' \in S \cap D$, and we clearly have that $s \le s' \le d$.

Next, suppose that $\sim$ corresponds to a strict Heyting subalgebra. Then $D=X$. Therefore, the second condition in Lemma~\ref{lem:S-morph} trivially holds. Thus, we only have to require that $S \cap D=S$ is saturated.
\end{proof}

We conclude this section by characterizing dually maximal subalgebras and maximal nuclear subalgebras.

\begin{definition}
We call a proper subalgebra $B$ of an implicative semilattice $A$ \textit{maximal} if there is no proper subalgebra of $A$ properly containing $B$.
We call a proper nuclear subalgebra of a nuclear implicative semilattice \textit{maximal} if it is maximal among the proper nuclear subalgebras.
\end{definition}

To characterize maximal subalgebras, we require the following lemma.

\begin{lemma}\label{lemma:inclusion subalg dual}
Let $(X, \le)$ be a finite poset, $\mathfrak B_1, \mathfrak B_2$ subalgebras of $\upsets(X)$, and $\sim_1,\sim_2$ the corresponding partial correct partitions of $X$ with domains $D_1,D_2 \subseteq X$. Then $\mathfrak B_1 \subseteq \mathfrak B_2$ iff the following conditions are satisfied:
\begin{enumerate}
\item $D_1 \subseteq D_2$,
\item $D_1$ is saturated with respect to $\sim_2$,
\item $\sim_1$ is an extension of $\sim_2$ on $D_1$.
\end{enumerate}
\end{lemma}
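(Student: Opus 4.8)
The plan is to pass to the dual side via K\"ohler duality and recast the inclusion $\mathfrak B_1\subseteq\mathfrak B_2$ as a factorization of \kmor{}s. By Proposition~\ref{prop:subalgebras of U(X) and correct partitions} (together with the natural isomorphism $\varepsilon_X\colon X\to X_{\upsets(X)}$), the subalgebra $\mathfrak B_i$ corresponds, under K\"ohler duality, to the onto \kmor{} $\pi_i\colon X\to Y_i$ whose domain is $D_i$, whose codomain is the quotient poset $Y_i=D_i/{\sim_i}$, and which sends $x\in D_i$ to its $\sim_i$-class $[x]_i$. Now $\mathfrak B_1\subseteq\mathfrak B_2$ holds iff the inclusion $m_1\colon\mathfrak B_1\hookrightarrow\upsets(X)$ factors as $m_1=m_2 k$ through the inclusion $m_2\colon\mathfrak B_2\hookrightarrow\upsets(X)$ by an implicative semilattice homomorphism $k$: if $\mathfrak B_1\subseteq\mathfrak B_2$ one takes $k$ to be the set inclusion, and conversely $m_1=m_2 k$ forces $b=m_1(b)=m_2(k(b))=k(b)\in\mathfrak B_2$ for every $b\in\mathfrak B_1$. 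Dualizing this factorization with the contravariant equivalence, and transporting along $\varepsilon_X$ and the isomorphisms $(\mathfrak B_i)_*\cong Y_i$, yields: $\mathfrak B_1\subseteq\mathfrak B_2$ iff there is a \kmor{} $g\colon Y_2\to Y_1$ with $g\pi_2=\pi_1$.

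I would then show that the existence of such a $g$ is equivalent to (1)--(3). For the forward direction, assume $g\pi_2=\pi_1$. Comparing domains of partial maps gives $D_1=\operatorname{dom}(g\pi_2)=\pi_2^{-1}(\operatorname{dom}g)\subseteq\operatorname{dom}\pi_2=D_2$, which is (1); and since $D_1$ is a $\pi_2$-preimage it is a union of fibers of $\pi_2$, hence saturated with respect to $\sim_2$, which is (2). For (3), if $x,x'\in D_1$ with $x\sim_2 x'$, then $[x]_1=\pi_1(x)=g\pi_2(x)=g\pi_2(x')=\pi_1(x')=[x']_1$, so $x\sim_1 x'$.

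For the converse, assume (1)--(3). By (1) and (2) one has $\pi_2^{-1}(\pi_2(D_1))=D_1$, so the rule $g([x]_2):=[x]_1$ for $x\in D_1$ defines a partial map on $Y_2$ with domain $E:=\pi_2(D_1)$; it is well defined because, for $x,x'\in D_1$, $x\sim_2 x'$ implies $x\sim_1 x'$ by (3). By construction $\operatorname{dom}(g\pi_2)=\pi_2^{-1}(E)=D_1=\operatorname{dom}\pi_1$ and $g\pi_2$ agrees with $\pi_1$ on $D_1$, hence $g\pi_2=\pi_1$. It then remains to verify that $g$ is a \kmor{}, and this is the part that has to be carried out carefully; once the reformulation is in place it is only a short chase with Definition~\ref{def:kmor} for $\pi_1$ and $\pi_2$. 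Given $[x]_2<[x']_2$ in $Y_2$ with $x,x'\in D_1$, the back condition of $\pi_2$ supplies $x''\in D_2$ with $x<x''$ and $\pi_2(x'')=[x']_2$; then $x''\in D_1$ by (2), $[x]_1<[x'']_1$ by Definition~\ref{def:kmor}(1) for $\pi_1$, and $[x'']_1=[x']_1$ by (3) since $x''\sim_2 x'$; hence $g([x]_2)<g([x']_2)$. Given $g([x]_2)=[x]_1<z$ in $Y_1$, the back condition of $\pi_1$ supplies $w\in D_1$ with $x<w$ and $\pi_1(w)=z$; then $[w]_2\in E$, $[x]_2<[w]_2$ by Definition~\ref{def:kmor}(1) for $\pi_2$, and $g([w]_2)=z$.

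I expect the only real difficulty to be the partial-function bookkeeping, and in particular noticing that the identity $\pi_2^{-1}(\pi_2(D_1))=D_1$ used to give $g$ a well-defined domain is exactly condition (2), while (1) is what makes $D_1\subseteq D_2$ available and (3) is exactly what makes $g$ single-valued --- so that the three conditions arise naturally, one for each step, rather than from any single hard argument.
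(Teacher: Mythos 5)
Your proposal is correct and follows essentially the same route as the paper: both reduce the inclusion $\mathfrak B_1\subseteq\mathfrak B_2$ to the existence of a \kmor{} $g:Y_2\to Y_1$ with $g\pi_2=\pi_1$, read off (1)--(3) from the domain identity $D_1=\pi_2^{-1}(\operatorname{dom}g)$ and single-valuedness, and conversely build $g$ on $\pi_2(D_1)$ by $[x]_2\mapsto[x]_1$. Your verification that $g$ is a \kmor{} is in fact slightly more detailed than the paper's, which only checks strictness explicitly and leaves the back condition as straightforward.
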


\begin{proof}
Let $Y_1:=X/{\sim_1}$ and $Y_2:=X/{\sim_1}$ be the quotients and $\pi_1:X \to Y_1$ and $\pi_2:X \to Y_2$ the corresponding \kmor{}s. The domain of each $\pi_i$ is $D_i$, and $x \sim_i y$ iff $\pi_i(x)=\pi_i(y)$.
By Proposition~\ref{prop:subalgebras of U(X) and correct partitions},
$\mathfrak B_1 \subseteq \mathfrak B_2$
iff there is an onto \kmor{} $\pi_3: Y_2 \to Y_1$ such that $\pi_3 \pi_2=\pi_1$.

Now suppose that $\mathfrak B_1 \subseteq \mathfrak B_2$. Let $D_3 \subseteq Y_2$ be the domain of $\pi_3$. Then
\[
D_1=\pi_1^{-1}(Y_1)=\pi_2^{-1}\pi_3^{-1}(Y_1)=\pi_2^{-1}(D_3).
\]
So $D_1 \subseteq D_2$ and $D_1$ is saturated with respect to $\sim_2$. Moreover, if $x,y \in D_1$ with $x \sim_2 y$, then $\pi_1(x)=\pi_3 \pi_2(x)=\pi_3 \pi_2(y)=\pi_1(y)$, so $x \sim_1 y$.

Conversely, suppose that the three conditions are satisfied. Define $\pi_3:Y_2 \to Y_1$ by setting the domain of $\pi_3$ to be $\pi_2(D_1)$ and
$\pi_3(\pi_2(x))=\pi_1(x)$. We have that $\pi_3$ is well defined because $D_1 \subseteq D_2$ and $\sim_1$ extends $\sim_2$ on $D_1$. To see that $\pi_3$ is strict, let $x,y \in D_1$ be such that $\pi_2(x) < \pi_2(y)$. Then there is $z \in D_2$ with $x < z$ and $\pi_2(z)=\pi_2(y)$. Therefore, $z \sim_2 y$. Since $D_1$ is saturated
with respect to $\sim_2$,
$z \in D_1$, so $\pi_3(\pi_2(x))=\pi_1(x) < \pi_1(z)=\pi_3(\pi_2(z))=\pi_3(\pi_2(y))$. It is then straightforward to see that $\pi_3$ is a \kmor{} because
$\pi_1$ and $\pi_2$ are \kmor{}s.
From the definition of $\pi_3$ it is clear that $\pi_3 \pi_2=\pi_1$. Thus, $\mathfrak B_1\subseteq \mathfrak B_2$.
\end{proof}

\begin{definition}
Let $(X,\le)$ be a finite poset and $x\in X$.
\begin{enumerate}
\item We call $y$ an \emph{$($upper$)$ cover} of $x$ if $x<y$ and there is no $z\in X$ with $x<z<y$.
\item Let $\coverof x$ be the set of covers of $x$.
\end{enumerate}
\end{definition}

\begin{theorem} \label{thm:dual maximal subalg}
Let $(X,\le)$ be a finite poset. Maximal subalgebras of $\upsets(X)$ correspond to partial correct partitions $\sim$ of $X$ with domain $D$ such that either
\begin{enumerate}
\item $D=X \setminus \{ x \}$ for some $x \in X$ and $\sim$ is the identity relation on $D$, or
\item $D=X$ and the only non-trivial equivalence class of $\sim$ is $\{x,y\}$ with $\coverof x=\coverof y$.
\end{enumerate}
Maximal subalgebras of the first kind are total, and the ones of the second kind are strict Heyting.
\end{theorem}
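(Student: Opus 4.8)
The plan is to push everything through Proposition~\ref{prop:subalgebras of U(X) and correct partitions}, which turns subalgebras of $\upsets(X)$ into partial correct partitions of $X$ with inclusion governed by Lemma~\ref{lemma:inclusion subalg dual}, and then to split off the two cases using the decomposition packaged in Lemma~\ref{lem:total and strict hey dually}. Concretely, if $\mathfrak B$ is a subalgebra of $\upsets(X)$ with induced nucleus $j$, then Lemma~\ref{lem:total and strict hey dually}(3) together with Proposition~\ref{prop:nucleus induced by subalgebra}(1) yields a chain $\mathfrak B\subseteq\upsets(X)_j\subseteq\upsets(X)$ in which $\mathfrak B$ is a strict Heyting subalgebra of the total subalgebra $\upsets(X)_j$. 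If $\mathfrak B$ is a maximal proper subalgebra, one of these two inclusions must be an equality: either $\mathfrak B=\upsets(X)_j$, and then $\mathfrak B$ is a total subalgebra, or $\upsets(X)_j=\upsets(X)$, forcing $j$ to be the identity nucleus and hence (Lemma~\ref{lem:total and strict hey dually}(2)) $\mathfrak B$ to be a strict Heyting subalgebra. So it suffices to classify the maximal proper total subalgebras and the maximal proper strict Heyting subalgebras separately, and then verify that the partitions so obtained are maximal among \emph{all} proper subalgebras; this last check is routine with Lemma~\ref{lemma:inclusion subalg dual}, since any subalgebra containing one with domain $D$ has domain $\supseteq D$.

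For total subalgebras I would first note, directly from the two defining conditions, that the identity relation on \emph{any} subset $D\subseteq X$ is a partial correct partition, and that by Lemma~\ref{lem:total and strict hey dually}(1) these are exactly the total subalgebras; moreover Lemma~\ref{lemma:inclusion subalg dual} shows that the corresponding subalgebras $\mathfrak B_{D_1}\subseteq\mathfrak B_{D_2}$ iff $D_1\subseteq D_2$. Hence the maximal proper total subalgebras are precisely those with $D=X\setminus\{x\}$, which is case~(1). A short argument with Lemma~\ref{lemma:inclusion subalg dual} confirms these are maximal among all proper subalgebras: a subalgebra with domain $X$ containing $\mathfrak B_{X\setminus\{x\}}$ must have $\{x\}$ as its own $\sim$-class by saturation, and the identity on $X\setminus\{x\}$ then forces the partition to be the identity, i.e.\ the whole algebra.

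For strict Heyting subalgebras the domain is forced to be $X$, so any subalgebra properly containing such a $\mathfrak B$ again has domain $X$ and is strict Heyting; thus maximality reduces to the statement that $\sim$ is a total correct partition $\neq\mathrm{id}$ admitting no correct partition strictly between it and $\mathrm{id}$. I would argue this forces $\sim$ to have a single nontrivial class $\{x,y\}$ in two steps. First, if $\sim$ has two distinct nontrivial classes, pick a nontrivial class $C$ maximal among nontrivial classes in the quotient poset $X/{\sim}$ and \emph{isolate} it, i.e.\ keep $C$ as one class and make all other elements singletons; the only nontrivial instance of the correctness condition for this new partition needs an element $z$ lying strictly above some $b\in C$, and by maximality of $C$ such a $z$ is in a singleton class of $\sim$ (or equals something in $C$, which is impossible since $C$ is an antichain), so correctness of $\sim$ already supplies $a\le z$; this isolated partition is strictly finer than $\sim$ but not $\mathrm{id}$, contradicting maximality. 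Second, once $\sim$ has a unique nontrivial class $C$, the correctness condition collapses to ``all elements of $C$ have the same strict upset'', which in a finite poset is the same as $\coverof c=\coverof{c'}$ for all $c,c'\in C$, because $\up c\setminus\{c\}$ is an upset whose minimal elements are exactly the covers of $c$; if $|C|\ge 3$ we could delete one point of $C$ and still have a correct partition strictly between $\sim$ and $\mathrm{id}$, so $|C|=2$. This gives case~(2), and conversely a single $\{x,y\}$-class with $\coverof x=\coverof y$ is a correct partition whose only proper refinement is $\mathrm{id}$, so it is maximal proper.

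The main obstacle is the strict Heyting case, and within it the verification that the two simplifying moves — isolating a top nontrivial class, and deleting one point from the unique nontrivial class — again yield correct partitions; the isolation step in particular hinges on choosing the class to be maximal in $X/{\sim}$ so that the correctness condition can be discharged using the correctness of the original partition. The remaining parts are bookkeeping with Lemma~\ref{lemma:inclusion subalg dual} and the dictionary of Lemma~\ref{lem:total and strict hey dually}.
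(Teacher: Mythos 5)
Your argument is correct, but it is organized differently from the paper's. The paper proves the theorem in one pass: it first checks via Lemma~\ref{lemma:inclusion subalg dual} that the partitions in (1) and (2) are maximal, and then shows directly that every proper subalgebra is contained in one of the two kinds --- if $D\neq X$ it picks $x\notin D$ and uses the identity on $X\setminus\{x\}$, and if $D=X$ it picks $x$ maximal among elements with non-singleton classes, pairs it with some $y\sim x$, and observes $\coverof x=\coverof y$. You instead begin with the chain $\mathfrak B\subseteq\upsets(X)_j\subseteq\upsets(X)$ coming from Proposition~\ref{prop:nucleus induced by subalgebra} and Lemma~\ref{lem:total and strict hey dually}(3), so that maximality immediately forces $\mathfrak B$ to be either total or strict Heyting, and you then classify the maximal objects within each type (the total case being trivial; the strict Heyting case handled by isolating a quotient-maximal nontrivial class and then shrinking it to size two). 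Your dichotomy step is conceptually attractive --- it explains in advance why exactly these two kinds of subalgebras occur, namely as the two factors of the decomposition in Proposition~\ref{prop:decomposition} --- and your isolation argument is sound: the key point that classes strictly above a quotient-maximal nontrivial class are singletons follows from strictness of the quotient \kmor{}, as you note. The price is the extra bookkeeping you correctly flag, namely that maximality within a type must be upgraded to maximality among all proper subalgebras (your saturation argument for the total case, and the observation that anything containing a domain-$X$ partition again has domain $X$), plus a two-step reduction where the paper needs only one selection. The paper's more direct containment argument also has the advantage that its pattern carries over verbatim to the nuclear refinement in Theorem~\ref{thm:dual maximal nuclear subalg}.
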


\begin{proof}
It is straightforward to see that the partial equivalence relations defined in (1) and (2) are partial correct partitions. It follows from Lemma~\ref{lemma:inclusion subalg dual} that the subalgebras corresponding to these partial correct partitions are maximal. Indeed, the relations in (1) cannot be further refined while keeping $X \setminus \{ x \}$ saturated without getting the identity relation on the whole $X$; and the only relation finer than the relations in (2) is the total identity relation because their domain is already $X$. It is then sufficient to show that every proper subalgebra $\mathfrak B$ of $\upsets(X)$ is contained in a subalgebra corresponding to a partial correct partition of one of these two kinds.

Suppose $\mathfrak B$ corresponds to the partial correct partition $\sim$ of $X$ with domain $D \subseteq X$. If $D \neq X$, then we pick $x \in X \setminus D$. By Lemma~\ref{lemma:inclusion subalg dual}, the partial identity relation with domain $X \setminus \{x\}$ corresponds to a subalgebra containing $\mathfrak B$. If $D=X$ and $\sim$ is not the identity relation, take $x$ maximal among the elements of $X$ that have non-singleton equivalence classes with respect to $\sim$. Let $y \sim x$ and $y \ne x$. Since $x$ is maximal and $\sim$ is a total correct partition, $\coverof x=\coverof y$. Therefore, the equivalence relation $\sim'$ whose only non-trivial equivalence class is $\{x,y\}$ is a total correct partition, and by Lemma~\ref{lemma:inclusion subalg dual} it corresponds to a subalgebra containing $\mathfrak B$.

Since the partial correct partitions in (1) are the identity relations on their domains, the subalgebras corresponding to them are total, whereas the total partial correct partitions in (2) correspond to strict Heyting subalgebras
by Lemma~\ref{lem:total and strict hey dually}.
\end{proof}

\begin{theorem}\label{thm:dual maximal nuclear subalg}
Let $(X,S)$ be a finite S-poset. Maximal nuclear subalgebras of $(\upsets(X),j_S)$ correspond to partial correct partitions $\sim$ of $X$ with domain $D$ such that either
\begin{enumerate}
\item $D=X \setminus \{ x \}$ for some $x \in X$, $\sim$ is the identity relation on $D$, and $x \notin S$ or $\coverof x \subseteq S$, or
\item $D=X$, the only non-trivial equivalence class of $\sim$ is $\{x,y\}$ with $\coverof x=\coverof y$, and $x \in S$ iff $y \in S$.
\end{enumerate}
Maximal nuclear subalgebras of the first kind are total, and the ones of the second kind are strict Heyting.
Furthermore, maximal nuclear subalgebras are exactly the maximal subalgebras that are nuclear.
\end{theorem}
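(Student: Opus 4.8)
The plan is to pass to the dual side and imitate the proof of Theorem~\ref{thm:dual maximal subalg}, keeping track of the two nuclearity conditions of Proposition~\ref{prop:dual sub} at every step. Recall that nuclear subalgebras of $(\upsets(X),j_S)$ are exactly the subalgebras whose partial correct partition $\sim$ (with domain $D$) also satisfies (i) $S\cap D$ is saturated, and (ii) every $s\in S$, $d\in D$ with $s\le d$ admit $s'\in S\cap D$, $d'\in D$ with $s\le s'\le d'$ and $d\sim d'$; and that Lemma~\ref{lemma:inclusion subalg dual} describes inclusions among these subalgebras purely in terms of domains and partitions (it applies verbatim, nuclear subalgebras being in particular subalgebras). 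The first task is to check that the two listed families are nuclear partial correct partitions. In case~(1), $\sim$ is the identity on $X\setminus\{x\}$, so (i) is automatic and (ii) reduces to the dichotomy ``$x\notin S$ or $\coverof x\subseteq S$'': if $x\in S$ lies below some $d\in D$ then $x<d$, and a cover of $x$ below $d$ is the required $s'$, forcing exactly that condition. In case~(2), $\coverof x=\coverof y$ forces $\{x,y\}$ to be an antichain and makes $\sim$ a correct partition precisely as in Theorem~\ref{thm:dual maximal subalg}, while ``$x\in S$ iff $y\in S$'' is the saturation condition (i). Finally Lemma~\ref{lem:total and strict hey dually} identifies the first family with total nuclear subalgebras and the second with strict Heyting nuclear subalgebras.

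Next I would record the easy facts. Each partition of kind~(1) is one of those in Theorem~\ref{thm:dual maximal subalg}(1) and each of kind~(2) one of those in Theorem~\ref{thm:dual maximal subalg}(2); hence the corresponding subalgebras are maximal among all subalgebras of $\upsets(X)$, a fortiori among nuclear ones. Conversely, any maximal subalgebra that happens to be nuclear is automatically a maximal nuclear subalgebra, since a strictly larger proper nuclear subalgebra would be a strictly larger proper subalgebra. This disposes of one half of the final sentence.

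For the converse, let $\mathfrak C$ be a maximal nuclear subalgebra, with partition $\sim$ and domain $D$; it is proper by definition, so $(D,\sim)\ne(X,\mathrm{id})$. If $\sim$ is the identity on $D$, then $D\ne X$, and I claim some $p\in X\setminus D$ is \emph{good}, meaning $p\notin S$ or $\coverof p\subseteq S$: otherwise $X\setminus D\subseteq S$ and every point of $X\setminus D$ has a cover outside $S$; taking $p$ maximal in $X\setminus D$, such a cover $z$ must lie in $D$ (being above $p$), and applying (ii) to $p\le z$ with $\sim$ the identity forces $z\in S$, a contradiction. For a good $p$, the identity partition on $X\setminus\{p\}$ is a nuclear partial correct partition of kind~(1) containing $\mathfrak C$ by Lemma~\ref{lemma:inclusion subalg dual}, and maximality gives equality, so $\mathfrak C$ is of kind~(1). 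If instead $\sim$ is not the identity on $D$, pick $x$ maximal in the poset order among elements of nontrivial $\sim$-classes and $y\sim x$, $y\ne x$; as in Theorem~\ref{thm:dual maximal subalg}, $\coverof x=\coverof y$ and the partition $\sim'$ on $D$ whose only nontrivial class is $\{x,y\}$ is a partial correct partition, which is nuclear since (i) for $\sim$ yields $x\in S\iff y\in S$. As $\sim'\subseteq\sim$, Lemma~\ref{lemma:inclusion subalg dual} gives $\mathfrak C\subseteq\mathfrak B_{\sim'}$, and maximality forces $\sim=\sim'$. If $D=X$ we are in kind~(2). If $D\ne X$, I claim $\mathfrak C$ cannot be maximal: if some $p\in X\setminus D$ is good, the identity on $X\setminus\{p\}$ is a proper nuclear subalgebra properly containing $\mathfrak C$; otherwise, as above, there are $p\in X\setminus D$ and a cover $z\notin S$ of $p$ with $z\in D$, and (ii) for $\sim$ forces $z\in\{x,y\}$, whence $x,y\notin S$ and $p<x$, $p<y$; then adjoining $p$ to $D$ as a new singleton class gives a partial correct partition that is still nuclear (the new point lies in $S$ and sits below the only nontrivial class) and properly contains $\mathfrak C$. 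Thus every maximal nuclear subalgebra is of kind~(1) or~(2); being such, it is a maximal subalgebra by Theorem~\ref{thm:dual maximal subalg}, completing the final sentence.

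The step I expect to be the main obstacle is exactly this last situation — a nuclear subalgebra whose partition is nontrivial and whose domain omits some points of $X$ — because the naive enlargement ``delete an arbitrary omitted point'' need not be nuclear. The fix combines two enlargement moves (first collapse the partition to a single two-element class $\{x,y\}$, then adjoin an omitted point lying below $\{x,y\}$ as a singleton), and the bookkeeping is in verifying that each move preserves both the partial-correct-partition axioms, where the lifting condition is only required relative to the domain, and the two conditions of Proposition~\ref{prop:dual sub}; locating the right omitted point via condition~(ii) is the heart of the matter.
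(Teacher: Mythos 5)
Your overall architecture matches the paper's (check the two families are nuclear, note via Theorem~\ref{thm:dual maximal subalg} that they are maximal among all subalgebras, then show every maximal nuclear subalgebra is contained in one of them), and your Case A, where $\sim$ is the identity on $D$, is correct and close to the paper's first case. The genuine gap is the opening move of Case B: you assert that the partition $\sim'$ on the \emph{same} domain $D$ whose only nontrivial class is $\{x,y\}$ is nuclear, and you justify this only by checking the saturation condition (i) of Proposition~\ref{prop:dual sub}. Condition (ii) is not automatic when $D\ne X$ and can fail for $\sim'$ even though it holds for $\sim$: condition (ii) for $\sim$ supplies some $d'$ with $d\sim d'$, and after collapsing the classes that $d'$ need no longer be $\sim'$-related to $d$. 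Concretely, let $X=\{d,d_1,d_2,s',s_0\}$ with $s_0<d$, $s_0<s'<d_1$, and $d_2$ incomparable to everything; put $D=X\setminus\{s_0\}$, $S=\{s_0,s'\}$, and let $\sim$ have the single nontrivial class $\{d,d_1,d_2\}$. This $\sim$ is a nuclear partial correct partition (for $s_0\le d$ take $s'\le d_1$ with $d_1\sim d$), but choosing $x=d_2$ and $y=d$ — a choice your recipe permits — gives a $\sim'$ violating condition (ii) at the pair $s_0\le d$: the only element of $S\cap D$ above $s_0$ is $s'$, and nothing above $s'$ lies in $\{d,d_2\}$. So $\mathfrak B_{\sim'}$ is not closed under $j_S$, and you cannot invoke maximality of $\mathfrak C$ among \emph{nuclear} subalgebras to force $\sim=\sim'$. (This $\sim$ is of course not maximal, but your argument needs the claim for an arbitrary maximal $\mathfrak C$ whose shape you do not yet know, so the reasoning cannot appeal to the conclusion.)

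The paper avoids this by performing the class collapse and the domain enlargement simultaneously: it chooses $x$ maximal in $Z=(X\setminus D)\cup\{\text{nontrivially identified points}\}$ — a strictly stronger maximality than yours, guaranteeing that everything strictly above $x$ (and above $y$) lies in $D$ with trivial class, which is also what actually yields $\coverof x=\coverof y$ — and then passes to the \emph{total} correct partition with only nontrivial class $\{x,y\}$. Totality makes condition (ii) vacuous (take $s'=s$, $d'=d$), so only saturation needs checking, while Lemma~\ref{lemma:inclusion subalg dual} still gives the containment because $x,y\in\max Z\subseteq D$ in that case. Your premature assertion of $\coverof x=\coverof y$ (whose proof in Theorem~\ref{thm:dual maximal subalg} uses $D=X$, and which can fail for your $x$ when a cover of $x$ lies outside $D$) is a symptom of the same issue. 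The later pieces of your Case B (finding a good $p$, or adjoining $p$ below $\{x,y\}$) are fine in themselves but become unnecessary once one collapses to a total partition as the paper does.
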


\begin{proof}
We first show that the correct partitions in (1) and (2) are nuclear. Let $\sim$ be a partial correct partition such that $D=X \setminus \{ x \}$ and $\sim$ is the identity on $D$. Then $\sim$ is nuclear iff $x \notin \max (S \cap \dn d)$ for all $d \in D$. This happens exactly when $x \notin S$ or $\coverof x \subseteq S$. Next let $\sim$ be a partial correct partition such that $D=X$ and the only non-trivial equivalence class of $\sim$ is $\{x,y\}$. Then $\sim$ is nuclear exactly when $\{x,y\}$ is either contained in or disjoint from $S$; that is, when $x \in S$ iff $y \in S$. Applying Proposition~\ref{prop:dual total and strict Hey nuclear subalg} then yields that the correct partitions in (1) and (2) are nuclear.

By Theorem~\ref{thm:dual maximal subalg}, the partial correct partitions in (1) and (2) correspond to maximal subalgebras. In particular, they are maximal nuclear subalgebras. It remains to show that every proper maximal nuclear subalgebra corresponds to a partial correct partition in (1) or (2). We do this by showing that each proper nuclear subalgebra is contained in a nuclear subalgebra corresponding to a partial correct partition of one of these two kinds.

Let $\sim$ be a nuclear partial correct partition with domain $D$. We assume that $\sim$ does not correspond to $\upsets(X)$; that is, $\sim$ is not the total identity partition. Let $Z$ be the union of $X \setminus D$ and the elements of $D$ whose equivalence class is nontrivial. Then $Z$ is nonempty. Note that if $z \in \max Z$, then $\up z \setminus \{ z \} \subseteq D$ and $\sim$ restricted to $\up z \setminus \{ z \}$ is the identity relation. Either $\max Z \subseteq D$ or $\max Z \nsubseteq D$.

First suppose that $\max Z \nsubseteq D$, and take $x \in (\max Z) \setminus D$. By Lemma~\ref{lemma:inclusion subalg dual}, the subalgebra corresponding to the identity relation with domain $X \setminus \{ x \}$ contains the subalgebra corresponding to $\sim$. It remains to show that the identity relation with domain $X \setminus \{ x \}$ is one of the relations in (1); that is, $x \notin S$ or $\coverof x \subseteq S$. Suppose $x \in S$ and let $d \in \coverof x$. Then $d \in D$ and since $\sim$ is nuclear, there are $s' \in D \cap S$, $d' \in D$ such that $x \le s' \le d'$ and $d \sim d'$. As $x \in \max Z$ and $x < d$, we have $d=d'$.
Since $x \notin D$ and $x \le s'$, we have $x < s'$.
Therefore, $x < s' \le d \in \coverof x$ implies $d=s'$. Thus, $\coverof x \subseteq S$.

Next suppose that $\max Z \subseteq D$. Let $x \in \max Z$ and $y$ be such that $x \neq y$ and $x \sim y$. We show that $y \in \max Z$. There is $z \in \max Z$ such that $y \le z$. Then $z \in D$ and since $\sim$ is a partial correct partition, there is $w \in D$ such that $x \le w$ and $w \sim z$.
We have that $w \in Z$ because either its equivalence class is nontrivial or else $w=z \in Z$. As $w \in Z$, $x\in \max Z$, and $x\le w$, we have $x=w$.
Therefore, $y \sim z$ and $y \le z$. Strictness then implies $y=z$. Thus, $y\in \max Z$. This yields that $\up x \setminus \{ x \}$ and $\up y \setminus \{ y \}$ are contained in $D$ and $\sim$ restricted to these sets is the identity relation. Since $\sim$ is a partial correct partition and $x \sim y$, we conclude that $\up x \setminus \{ x \}=\up y \setminus \{ y \}$, so $\coverof x = \coverof y$. Also, as $\sim$ is nuclear, $S \cap D$ is saturated, which implies $x \in S$ iff $y \in S$. Thus, by Lemma~\ref{lemma:inclusion subalg dual}, the total correct partition with the only nontrivial equivalence class $\{x,y\}$ corresponds to a nuclear subalgebra containing the nuclear subalgebra corresponding to $\sim$.

Finally, it follows from Theorem~\ref{thm:dual maximal subalg} that
maximal nuclear subalgebras of the first kind are total, the ones of the second kind are strict Heyting, and that
every maximal nuclear subalgebra is a maximal subalgebra.
\end{proof}

\section{Coloring technique, universal models, and local finiteness of $\nis$}\label{sec:coloring}

In this section we prove our main result that $\nis$ is locally finite, thus generalizing Diego's theorem. For this we adopt the well-known coloring technique \cite{EG77}, which we then use to construct $n$-universal models for each $n$. Our construction is an adaptation to our setting of similar constructions in modal and intuitionistic logics (see, e.g., \cite{Ghi95,CZ97,Nic06}  and the references therein). For each $n$, the construction builds the $n$-universal model recursively, layer by layer. While it follows from the construction that each layer is finite, it is not obvious at all that the construction eventually terminates. Indeed, it often does not in similar situations in modal and intuitionistic logics. One of our main observations is that the construction indeed terminates. From this we derive that $\nis$ is locally finite.

Let $\mathfrak A=(A,j)$ be a finite nuclear implicative semilattice and let $a_1,\dots,a_n\in A$. We start by adapting the coloring technique of \cite{EG77} which will allow us to determine whether $a_1,\dots,a_n$ generate $\mathfrak A$. We would like to stress that while the coloring technique of \cite{EG77} applies to an arbitrary Heyting algebra, we will only be concerned with finite nuclear implicative semilattices. The reason being that if $\mathfrak A$ is finite, then we may use the results of Section~\ref{sec: finite duality nuclear implicative semilattices} and assume without loss of generality that $\mathfrak A$ is the algebra $(\upsets(X),j_S)$ of upsets of some finite S-poset $(X,S)$. The elements $a_1,\dots,a_n$ are then upsets of $X$, and give rise to ``coloring" of $X$.

\begin{convention}
From now on $n \ge 1$ will be fixed. Colors will be subsets of $\{ 1, \ldots, n \}$. If $n=0$, then we assume that $\{1, \ldots, n \}=\emptyset$, so $\emptyset$ is the only available color.
\end{convention}

The next definition works for an arbitrary (not necessarily finite) S-poset.

\begin{definition}
\begin{enumerate}
\item[]
\item A \textit{coloring} of an S-poset $(X,S)$ is a function $\colorsof: X \to \powerset(\{1, \ldots, n \})$ such that $x \le y$ implies $\colorsof(x) \subseteq \colorsof(y)$.
\item A \textit{model} is a triple $\mathfrak M=(X,S,c)$ where $(X,S)$ is an S-poset and $c$ is a coloring of $(X,S)$.
\end{enumerate}
\end{definition}

For $Y \subseteq X$ we let
\begin{equation*}\label{color of a subset} \tag{$\ddagger$}
\colorsof(Y)= \bigcap\{ \colorsof(x) \mid x \in Y \}.
\end{equation*}
In particular, $\colorsof(\emptyset)=\{1, \ldots, n \}$. We think of $\colorsof$ as a function associating to each element of $X$ one of $2^n$ colors. We refer to $\colorsof(x)$ as the \emph{color of} $x$,
and to $c(Y)$ as the \textit{color of} $Y$.

\begin{remark}\label{rem:colorings}
There is a one-to-one correspondence between colorings of $(X,S)$ and $n$-tuples $U_1, \ldots, U_n$ of upsets of $X$. Indeed, each $n$-tuple $U_1, \ldots, U_n \in \upsets(X)$ gives rise to the coloring $\colorsof: X \to \powerset(\{1, \ldots, n \})$ given by
\[
\colorsof(x)=\{i \in \{1, \ldots, n \} \mid x \in U_i \}.
\]
Conversely, each coloring gives rise to the $n$-tuple $U_1, \ldots, U_n \in \upsets(X)$ given by
\[
U_i=\{ x \in X \mid i \in \colorsof(x) \}
\]
for each $i=1, \ldots, n$.
\end{remark}

\begin{definition}
We say that a finite model $\mathfrak M=(X,S, \colorsof)$ is \textit{irreducible} if the nuclear implicative semilattice $(\upsets(X),j_S)$ is generated by the upsets $U_1, \ldots, U_n$.
\end{definition}

\begin{remark}\label{remark:finite n-gen and irred models}
It follows from the duality for finite nuclear implicative semilattices and Remark~\ref{rem:colorings} that there is a one-to-one correspondence between finite irreducible models and finite $n$-generated nuclear implicative semilattices, which is obtained by associating with each finite irreducible model $\mathfrak M=(X,S, \colorsof)$ the finite nuclear implicative semilattice $(\upsets(X),j_S)$ generated by $U_1,\dots,U_n$ where $U_i=\{ x \in X \mid i \in \colorsof(x) \}$ for each $i$.

In addition, homomorphisms of finite nuclear implicative semilattices that send generators to generators correspond to total one-to-one S-morphisms that preserve the coloring.
To define what it means, let $\mathfrak M=(X,S, \colorsof)$ and $\mathfrak N=(Y,T, \colorsof)$ be two models. We say that an S-morphism $f:\mathfrak{M} \to \mathfrak{N}$ \emph{preserves the coloring} if  $\colorsof (f(x))=\colorsof(x)$ for all $x \in X$. By Proposition~\ref{prop:one-to-one and onto dual correspondence}(2), $f:\mathfrak{M} \to \mathfrak{N}$ is dual to an onto homomorphism iff $f$ is total and one-to-one.
This is equivalent to $i \in \colorsof( f(x))$ iff $i \in \colorsof (x)$ for all $x \in X$ and $i \in \{1, \ldots, n \}$, which is the same as requiring that $\{x \in X \mid i \in \colorsof(x)\}=f^{-1}(\{y \in Y \mid i \in \colorsof(y)\})$ for all $i \in \{1, \ldots, n \}$. When $f$ is one-to-one and total, $f^*=f^{-1}$. Thus, $f$ preserves the coloring iff $f^*$ maps generators to generators.
\end{remark}

\begin{lemma}\label{lem:elements of dual of partition}
Let $X$ be a finite poset, $\mathfrak B$ a subalgebra of $\upsets(X)$, and $\sim$ the corresponding partial correct partition of $X$ with domain $D$. For an upset $U$ of $X$ we have that $U \in \mathfrak B$ iff $U \cap D$ is saturated and $\max(X \setminus U) \subseteq D$.
\end{lemma}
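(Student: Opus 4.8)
The plan is to use the explicit description of $\mathfrak B$ supplied by Proposition~\ref{prop:subalgebras of U(X) and correct partitions}. Let $Y$ be the quotient of $D$ by $\sim$, let $\pi:X\to Y$ be the partial quotient map (an onto \kmor{} with domain $D$), and recall that $\mathfrak B$ is the image of $\pi^*:\upsets(Y)\to\upsets(X)$, where $\pi^*(V)=X\setminus\dn\pi^{-1}(Y\setminus V)$. Unwinding this formula (using that $\pi^{-1}$ and $\dn$ act within $D$ and $X$ respectively) yields the pointwise criterion
\[
x\in\pi^*(V)\quad\Longleftrightarrow\quad \pi(w)\in V\ \text{ for every }w\in D\text{ with }x\le w,
\]
which is the only computation required and is routine. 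So $U\in\mathfrak B$ iff $U=\pi^*(V)$ for some $V\in\upsets(Y)$, and I would prove the two conditions equivalent to this.

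For the forward implication, suppose $U=\pi^*(V)$. If $x\in U\cap D$ and $x\sim y$, then $\pi(x)\in V$ by the criterion (take $w=x$), and for any $w\in D$ with $y\le w$ we have $\pi(x)=\pi(y)\le\pi(w)$, so $\pi(w)\in V$ since $V$ is an upset; hence $y\in\pi^*(V)=U$, and $U\cap D$ is saturated. If $x\in\max(X\setminus U)$, then $x\notin\pi^*(V)$, so by the criterion there is $w\in D$ with $x\le w$ and $\pi(w)\notin V$; but then $w\notin\pi^*(V)=U$ as well, so maximality of $x$ forces $w=x\in D$. Thus $\max(X\setminus U)\subseteq D$.

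For the converse, suppose $U$ is an upset with $U\cap D$ saturated and $\max(X\setminus U)\subseteq D$. I would take $V:=\pi(U\cap D)$, which is an upset of $Y$ by Remark~\ref{rem:p-morphism}(2) (since $U\cap D$ is the intersection of the upset $U$ with $D$, and $\pi(U)=\pi(U\cap D)$), and then verify $\pi^*(V)=U$ via the criterion. For $U\subseteq\pi^*(V)$: if $x\in U$ and $w\in D$ with $x\le w$, then $w\in U\cap D$ because $U$ is an upset, so $\pi(w)\in V$. For $\pi^*(V)\subseteq U$: if $x\notin U$, choose $w\in\max(X\setminus U)$ with $x\le w$ (possible as $X$ is finite); by hypothesis $w\in D$, and if $\pi(w)\in V=\pi(U\cap D)$ then $w\sim u$ for some $u\in U\cap D$, whence saturation gives $w\in U$, a contradiction; so $\pi(w)\notin V$, and the criterion yields $x\notin\pi^*(V)$.

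I expect the only delicate point — and where I would spend the most care — to be the bookkeeping forced by $\pi$ being a partial function: $\pi^{-1}$, the image $\pi(U)$, and the membership criterion all tacitly restrict to the domain $D$, and "saturated" must consistently be read as saturated as a subset of $D$. Beyond that there is no real obstacle: once the pointwise criterion for membership in $\pi^*(V)$ is in hand, both implications are immediate.
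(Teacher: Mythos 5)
Your proof is correct and follows essentially the same route as the paper's: both pass to the quotient map $\pi$, reduce membership in $\mathfrak B$ to the pointwise criterion $U=\{x\in X\mid \pi(\up x)\subseteq V\}$, use maximality of points of $X\setminus U$ in the forward direction, and take $V=\pi(U)$ with the same saturation argument for the converse. The only cosmetic difference is that where you argue directly with order preservation of $\pi$ and $V$ being an upset, the paper invokes $\pi(\up y)=\up\pi(y)$ from Remark~\ref{rem:p-morphism}(2); the content is identical.
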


\begin{proof}
Let $Y:=X/ {\sim}$ be the quotient and let $\pi: X \to Y$ be the \kmor{} sending $x\in D$ to its equivalence class $[x]$.
By Proposition~\ref{prop:subalgebras of U(X) and correct partitions},
$U \in \mathfrak B$ iff $U=\pi^*(V)$ for some upset $V$ of $Y$.  Therefore,
\begin{align*}
U \in \mathfrak B & \mbox{ iff } U=\pi^*(V) \mbox{ for some } V \in \upsets(Y) \\
& \mbox{ iff } U=X \setminus \dn \pi^{-1}(Y \setminus V)\mbox{ for some } V \in \upsets(Y) \\
& \mbox{ iff } U= \{ x \in X \mid \pi(\up x) \subseteq V \}\mbox{ for some } V \in \upsets(Y).
\end{align*}

First suppose that $U \in \mathfrak B$. Then $U= \{ x \in X \mid \pi(\up x) \subseteq V \}$ for some $V \in \upsets(Y)$. Let $x \in U \cap D$
and $x \sim y$, so $\pi(x)=\pi(y)$.
By Remark~\ref{rem:p-morphism}(2),
\[
\pi(\up y)=\up \pi(y)=\up \pi(x)=\pi(\up x) \subseteq V.
\]
Therefore, $y \in U$, and so $U \cap D$ is saturated. In addition, $x \in X \setminus U$ iff $\pi(\up x) \nsubseteq V$, which happens iff there is $y\in\up x\cap D$ with $\pi(y)\notin V$. Thus, if $x \in \max(X \setminus U)$, then such a $y$ has to be $x$, yielding that $x \in D$. Consequently, $\max(X \setminus U) \subseteq D$.

Conversely, suppose that $U \cap D$ is saturated and $\max(X \setminus U) \subseteq D$.
Since $U \in \upsets(X)$, we have $\pi(U) \in \upsets(Y)$ by Remark~\ref{rem:p-morphism}(2).
Let $V=\pi(U)$. We show $U= \{ x \in X \mid \pi(\up x) \subseteq V \}$, which yields that $U \in \mathfrak B$. Let $x \in U$. Since $U \in \upsets(X)$, we have $\up x \subseteq U$, so $\pi(\up x) \subseteq \pi(U)=V$. Let $x \notin U$. Then there is $z \in \up x \cap \max(X \setminus U)$. By our assumption, $z\in D$, so $\pi(z) \in \pi(\up x)$. On the other hand, $\pi(z) \notin \pi(U)$ because $U \cap D$ is saturated and $z \in D \setminus U$. Thus, $\pi(\up x) \nsubseteq \pi(U)=V$.
\end{proof}

\begin{lemma}\label{lem:U_i in dual of partition}
Let $\mathfrak M=(X,S, \colorsof)$ be a finite model and $\sim$ a partial correct partition of $X$ with domain $D$. The upsets $U_1, \ldots, U_n$ of $X$ belong to the subalgebra of $\upsets(X)$ corresponding to $\sim$ iff the following two conditions are satisfied:
\begin{enumerate}
\item if $x \sim y$, then $\colorsof(x)=\colorsof(y)$,
\item if $\colorsof(x) \neq \colorsof(\coverof x)$, then $x\in D$.
\end{enumerate}
\end{lemma}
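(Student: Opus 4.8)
The plan is to reduce everything to the membership criterion of Lemma~\ref{lem:elements of dual of partition}. If $\mathfrak B$ denotes the subalgebra of $\upsets(X)$ corresponding to $\sim$, then for an upset $U$ of $X$ we have $U\in\mathfrak B$ iff $U\cap D$ is saturated and $\max(X\setminus U)\subseteq D$. Applying this to each $U_i$ turns the assertion ``$U_1,\dots,U_n\in\mathfrak B$'' into the conjunction of the two statements ``$U_i\cap D$ is saturated for every $i$'' and ``$\max(X\setminus U_i)\subseteq D$ for every $i$'', and I will identify these with conditions (1) and (2) respectively.

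For the first conjunct, note that $\sim$ is an equivalence relation on $D$, so $x\sim y$ already forces $x,y\in D$. Hence saturation of $U_i\cap D$ says precisely that $x\sim y$ implies $x\in U_i\Leftrightarrow y\in U_i$; quantifying over $i$ and using $\colorsof(x)=\{\,i\mid x\in U_i\,\}$ from Remark~\ref{rem:colorings}, this is exactly the requirement that $x\sim y$ implies $\colorsof(x)=\colorsof(y)$, i.e.\ condition (1).

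For the second conjunct, two observations do the work. First, since each $U_i$ is an upset and $X$ is finite, $x\in\max(X\setminus U_i)$ iff $x\notin U_i$ and every cover of $x$ lies in $U_i$ (every element strictly above $x$ lies above some cover of $x$). Second, monotonicity of colorings gives $\colorsof(x)\subseteq\colorsof(\coverof x)$ for all $x$: indeed $\colorsof(x)\subseteq\colorsof(y)$ for every $y\in\coverof x$, and the inclusion is trivial when $x$ is maximal since then $\coverof x=\emptyset$ and $\colorsof(\emptyset)=\{1,\dots,n\}$. Consequently $\colorsof(x)\ne\colorsof(\coverof x)$ iff there is an index $i\in\colorsof(\coverof x)\setminus\colorsof(x)$, and unwinding the definitions this says exactly that $\coverof x\subseteq U_i$ while $x\notin U_i$, that is, $x\in\max(X\setminus U_i)$ for that $i$. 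Therefore $\bigcup_i\max(X\setminus U_i)=\{\,x\in X\mid\colorsof(x)\ne\colorsof(\coverof x)\,\}$, so requiring each $\max(X\setminus U_i)$ to be contained in $D$ is precisely condition (2). Combining the two conjuncts, $U_1,\dots,U_n\in\mathfrak B$ iff (1) and (2) both hold.

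This is essentially a bookkeeping translation and presents no genuine difficulty; the only spot that calls for care is the maximal-point case in the analysis of the second conjunct, where one invokes the convention $\colorsof(\emptyset)=\{1,\dots,n\}$ to verify that the identity $\bigcup_i\max(X\setminus U_i)=\{\,x\mid\colorsof(x)\ne\colorsof(\coverof x)\,\}$ continues to hold when $\coverof x=\emptyset$.
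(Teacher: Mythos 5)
Your proposal is correct and follows essentially the same route as the paper: both reduce to the membership criterion of Lemma~\ref{lem:elements of dual of partition} and then translate saturation of the $U_i\cap D$ into condition (1) and the containment $\max(X\setminus U_i)\subseteq D$ into condition (2) via the identity $\bigcup_i\max(X\setminus U_i)=\{x\mid \colorsof(x)\ne\colorsof(\coverof x)\}$. The extra care you take with the case $\coverof x=\emptyset$ and with why $x\in\max(X\setminus U_i)$ amounts to $\coverof x\subseteq U_i$ and $x\notin U_i$ is a fine elaboration of the same argument.
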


\begin{proof}
Let $\mathfrak B$ be the subalgebra of $(\upsets(X),j_S)$ corresponding to $\sim$. By Lemma~\ref{lem:elements of dual of partition} we have that an upset $U$ of $X$ is in $\mathfrak B$ iff $U \cap D$ is saturated and $\max(X \setminus U) \subseteq D$. We translate these two conditions for the upsets $U_i$ in terms of the coloring $\colorsof$. Recall that $U_i=\{ x \in X \mid i \in \colorsof(x) \}$ for each index $i$. Therefore, $U_i \cap D$ is saturated iff $x \sim y$ implies that $i \in \colorsof(x) \Leftrightarrow i \in \colorsof(y)$. Thus, $U_i \cap D$ is saturated for all $i$ iff $x \sim y$ implies $\colorsof(x)=\colorsof(y)$. For the second condition, since $X \setminus U_i = \{x \in X \mid i \notin \colorsof(x) \}$, we have $\max(X \setminus U_i)= \{x \in X \mid i \notin \colorsof(x), \; i \in \colorsof(\coverof x) \}$. Since $\colorsof(x) \subseteq \colorsof(\coverof x)$ for all $x \in X$, we have $\bigcup_i \max(X \setminus U_i)=\{x \in X \mid \colorsof(x) \neq \colorsof(\coverof x)\}$. Consequently, $\max(X \setminus U_i) \subseteq D$ for all $i$ iff $\bigcup_i \max(X \setminus U_i) \subseteq D$ iff $\{x \in X \mid \colorsof(x) \neq \colorsof(\coverof x)\} \subseteq D$.
\end{proof}

\begin{theorem} [Coloring Theorem]\label{thm:conditions for irred model}
A finite model $\mathfrak M=(X, S,\colorsof)$ is irreducible iff the following two conditions are satisfied:
\begin{enumerate}
\item $\forall x\in X \left(\colorsof(x)=\colorsof(\coverof x)\Rightarrow x\in S\ \&\ {\coverof x}\nsubseteq S\right)$,
\item $\forall x,y\in X \left(\coverof x=\coverof y\ \&\ \colorsof(x)=\colorsof(y)\ \&\ (x\in S\Leftrightarrow y\in S)\Rightarrow x=y\right)$.
\end{enumerate}
\end{theorem}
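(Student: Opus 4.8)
The plan is to reduce the statement to the classification of maximal nuclear subalgebras obtained in Theorem~\ref{thm:dual maximal nuclear subalg}. By definition, $\mathfrak M=(X,S,\colorsof)$ is irreducible iff the nuclear subalgebra of $(\upsets(X),j_S)$ generated by $U_1,\dots,U_n$ is all of $(\upsets(X),j_S)$. Since $(\upsets(X),j_S)$ is finite, this fails iff some \emph{proper} nuclear subalgebra contains $U_1,\dots,U_n$, and --- again using finiteness, so that every proper nuclear subalgebra is contained in a maximal one --- this happens iff some \emph{maximal} nuclear subalgebra of $(\upsets(X),j_S)$ contains $U_1,\dots,U_n$. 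So the first step is to record that $\mathfrak M$ is reducible iff there is a maximal nuclear subalgebra $\mathfrak B$ with $U_1,\dots,U_n\in\mathfrak B$.

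Next I would feed in the two relevant results. Theorem~\ref{thm:dual maximal nuclear subalg} tells us that a maximal nuclear subalgebra $\mathfrak B$ corresponds to a partial correct partition $\sim$ of $X$ of one of exactly two kinds, and Lemma~\ref{lem:U_i in dual of partition} tells us that $U_1,\dots,U_n$ lie in the subalgebra associated with $\sim$ iff (a) $x\sim y$ implies $\colorsof(x)=\colorsof(y)$, and (b) $\colorsof(x)\ne\colorsof(\coverof x)$ implies $x$ belongs to the domain $D$ of $\sim$. I would then examine the two kinds separately. If $\sim$ is of the first kind, so $D=X\setminus\{x\}$, $\sim$ is the identity on $D$, and $x\notin S$ or $\coverof x\subseteq S$, then (a) is automatic while (b) amounts exactly to $\colorsof(x)=\colorsof(\coverof x)$. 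If $\sim$ is of the second kind, so $D=X$ with unique non-trivial class $\{x,y\}$, $\coverof x=\coverof y$, and $x\in S\Leftrightarrow y\in S$, then (b) is automatic while (a) amounts exactly to $\colorsof(x)=\colorsof(y)$.

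Putting this together, $\mathfrak M$ is reducible iff either there is $x\in X$ with $\colorsof(x)=\colorsof(\coverof x)$ and ($x\notin S$ or $\coverof x\subseteq S$), or there are distinct $x,y\in X$ with $\coverof x=\coverof y$, $\colorsof(x)=\colorsof(y)$, and $x\in S\Leftrightarrow y\in S$. Negating both disjuncts yields precisely Conditions (1) and (2) of the statement: the failure of the first disjunct says that every $x$ with $\colorsof(x)=\colorsof(\coverof x)$ satisfies $x\in S$ and $\coverof x\nsubseteq S$, and the failure of the second says that any $x,y$ with $\coverof x=\coverof y$, $\colorsof(x)=\colorsof(y)$, and $x\in S\Leftrightarrow y\in S$ coincide.

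I expect the only real content to be the finiteness reduction in the first step (generated subalgebra $=$ whole algebra iff no maximal nuclear subalgebra contains the generators) and the careful matching of the two kinds of maximal nuclear subalgebras from Theorem~\ref{thm:dual maximal nuclear subalg} with the two clauses of Lemma~\ref{lem:U_i in dual of partition}; once the bookkeeping in the second and third steps is set up correctly, the equivalence drops out, with no genuine calculation required.
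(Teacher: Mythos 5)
Your proposal is correct and follows essentially the same route as the paper's proof: reduce irreducibility to the non-existence of a maximal nuclear subalgebra containing $U_1,\dots,U_n$, invoke Theorem~\ref{thm:dual maximal nuclear subalg} for the two kinds of maximal nuclear subalgebras, and apply Lemma~\ref{lem:U_i in dual of partition} to each kind before negating. The case analysis showing which clause of the lemma is automatic for each kind matches the paper's argument.
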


\begin{proof}
By definition, $\mathfrak M$ is irreducible iff $(\upsets(X),j_S)$ is generated by $U_1, \ldots, U_n$. Therefore, $\mathfrak M$ is irreducible iff there is no proper nuclear subalgebra of $(\upsets(X),j_S)$ containing $U_1, \ldots, U_n$. Since every proper nuclear subalgebra is contained in a maximal nuclear subalgebra, $\mathfrak M$ is irreducible iff there is no maximal nuclear subalgebra of $(\upsets(X),j_S)$ containing $U_1, \ldots, U_n$. By Theorem~\ref{thm:dual maximal nuclear subalg}, maximal nuclear subalgebras of $(\upsets(X),j_S)$ correspond to partial correct partitions $\sim$ of $X$ with domain $D$ such that either
\begin{enumerate}
\item $D=X \setminus \{ x \}$ for some $x \in X$, $\sim$ is the identity relation on $D$, and $x \notin S$ or $\coverof x \subseteq S$, or
\item $D=X$, the only non-trivial equivalence class of $\sim$ is $\{x,y\}$ with $\coverof x=\coverof y$, and $x \in S$ iff $y \in S$.
\end{enumerate}
By Lemma~\ref{lem:U_i in dual of partition}, all the $U_i$ are in the subalgebra corresponding to a partition of the first kind iff $\colorsof(x)=\colorsof(\coverof x)$. So excluding the existence of any such partition containing all the $U_i$ is equivalent to requiring that there is no $x \in X$ such that $\colorsof(x)=\colorsof(\coverof x)$, and $x \notin S$ or $\coverof x \subseteq S$ at the same time. This is exactly the condition
\[
\forall x\in X \left(\colorsof(x)=\colorsof(\coverof x)\Rightarrow x\in S\ \&\ {\coverof x}\nsubseteq S\right).
\]
By Lemma~\ref{lem:U_i in dual of partition}, all the $U_i$ are in the subalgebra corresponding to a partition of the second kind iff $\colorsof(x)=\colorsof(y)$. So excluding the existence of any such partition containing all the $U_i$ is equivalent to requiring that there are no elements $x \neq y$ such that $x \sim y$, $\coverof x=\coverof y$, $x \in S$ iff $y \in S$, and $\colorsof(x)=\colorsof(y)$ at the same time. This is exactly the condition
\[
\forall x,y\in X \left(\coverof x=\coverof y\ \&\ \colorsof(x)=\colorsof(y)\ \&\ (x\in S\Leftrightarrow y\in S)\Rightarrow x=y\right).
\]
\end{proof}

\begin{remark}\label{rem:full color not allowed}
Let $\mathfrak M=(X, S,\colorsof)$ be a finite irreducible model.
\begin{enumerate}
\item Condition~(1) of Theorem~\ref{thm:conditions for irred model} implies that no element of $X$ has full color $\{1, \ldots, n\}$. Indeed, since $X$ is finite, every element is below some maximal element. Therefore, if some element had full color, there would exist $x \in \max X$ such that $\colorsof (x)=\{1, \ldots, n\}$. Thus, $\colorsof(x)=\{1, \ldots, n\}= \colorsof(\emptyset)=\colorsof(\coverof x)$. By Condition~(1), $\emptyset=\coverof x \nsubseteq S$, a contradiction.
Since $U_1\cap\cdots\cap U_n=\{ x \in X \mid \colorsof(x)=\{1, \ldots, n\} \}$, we conclude that $U_1\cap\cdots\cap U_n=\varnothing$.
Dually this means that if a finite nuclear implicative semilattice $\mathfrak A$ is generated by $g_1, \ldots, g_n$, then $g_1 \meet \cdots \meet g_n$ has to be the bottom of $\mathfrak A$.
\item Condition~(2) of Theorem~\ref{thm:conditions for irred model} implies that if $x,y \in \max X$ are distinct and $x \in S$ iff $y \in S$, then $\colorsof(x)\ne\colorsof(y)$.
\end{enumerate}
\end{remark}

We next utilize the Coloring Theorem to construct $n$-universal models. Let $n$ be a fixed nonnegative integer. We assume that the coloring maps of all the models we consider are into $\powerset(\{1, \ldots, n\})$.

\begin{definition}
A model $\mathfrak L=(X,S,\colorsof)$ is \emph{$n$-universal} provided for every finite irreducible model $\mathfrak M=(Y,T,\colorsof)$ there is a unique embedding of posets $e:Y \to X$ such that $e(Y)$ is an upset of $X$, $e^{-1}(S)=T$, and $\colorsof (e(y))=\colorsof(y)$ for all $y \in Y$.
\end{definition}

\begin{definition}
The \textit{height} of a poset $(X,\le)$ is the supremum of the cardinalities of finite chains in $X$. The \textit{height of $x \in X$} is the height of the poset $\up x$. The \textit{height} of a model is the height of the underlying poset.
\end{definition}

We construct the $n$-universal model $\mathfrak L$ recursively, building it layer by layer, by constructing a sequence of finite irreducible models
\[
\mathfrak L_0\subseteq \mathfrak L_1\subseteq\cdots\subseteq \mathfrak L_k\subseteq\cdots
\]
Each $\mathfrak L_k$ in the sequence has height
$k$. The $n$-universal model $\mathfrak L$ is then the union of the models $\mathfrak L_k$.
Below we use $\subset$ to denote proper inclusion.

\begin{definition}\label{def:universal model}
For each $k \geq 0$, define the model $\mathfrak L_k=(X_k, S_k,\colorsof_k)$ recursively as follows.

{\bf Base case:}
Define $\mathfrak L_0 = (X_0,S_0,\colorsof_0)$ by setting $X_0,S_0=\varnothing$ and $\colorsof_0$ to be the empty map.

For $\sigma \subseteq \{ 1, \ldots, n \}$ consider the formal symbols $r_{\emptyset, \sigma}$ and $s_{\emptyset, \sigma}$. Then define $\mathfrak L_1 = (X_1,S_1,\colorsof_1)$ by setting
\begin{itemize}
\item $X_1=\{r_{\varnothing,\sigma}, s_{\varnothing,\sigma} \mid \sigma \subset \{ 1, \ldots, n\}\}$ and $\le_1$ is the identity relation on $X_1$,
\item $S_1=\{s_{\varnothing,\sigma} \mid \sigma \subset \{ 1, \ldots, n\} \}$,
\item $\colorsof_1(r_{\varnothing,\sigma}) = \colorsof_1(s_{\varnothing,\sigma} ) = \sigma$.
\end{itemize}

{\bf Recursive step:} Suppose $\mathfrak L_k=(X_k,S_k,\colorsof_k)$ is already constructed for $k\ge 1$. For $\alpha \subseteq X_k$ and $\sigma \subseteq \{ 1, \ldots, n \}$ consider the formal symbols $r_{\alpha, \sigma}$ and $s_{\alpha, \sigma}$. Then define $\mathfrak L_{k+1}=(X_{k+1},S_{k+1},\colorsof_{k+1})$ by setting
\begin{itemize}
\item $X_{k+1}$ is obtained by adding for each antichain $\alpha \subseteq X_k$ with $\alpha \nsubseteq X_{k-1}$ the following new elements to $X_k$:
\begin{enumerate}
\item $r_{\alpha, \sigma}$ for each $\sigma \subset \colorsof_k(\alpha)$,
\item $s_{\alpha, \sigma}$ for each $\sigma \subset \colorsof_k(\alpha)$,
\item $s_{\alpha, \colorsof_k(\alpha)}$ if $\alpha \nsubseteq S_k$.
\end{enumerate}
The partial order on $X_{k+1}$ extends the partial order on $X_k$ so that the covers of the elements of $X_{k+1}\setminus X_k$ are defined as $\coverof r_{\alpha, \sigma}=\coverof s_{\alpha, \sigma}=\alpha$.
\item $S_{k+1}$ is obtained by adding to $S_k$ the elements of $X_{k+1} \setminus X_k$ of the form $s_{\alpha, \sigma},s_{\alpha, \colorsof_k(\alpha)}$.
\item $\colorsof_{k+1}$ extends $\colorsof_k$ so that $\colorsof_{k+1}(r_{\alpha, \sigma})=\colorsof_{k+1}(s_{\alpha, \sigma})=\sigma$ and $\colorsof_{k+1}(s_{\alpha, \colorsof_k(\alpha)})=\colorsof_k(\alpha)$.
\end{itemize}
Finally, we define $\mathfrak L=(X,S,\colorsof)$ by setting
\[
X = \bigcup_k X_k, \ S = \bigcup_k S_k, \mbox{ and } \colorsof(x) = c_k(x) \mbox{ if } x \in X_k.
\]
\end{definition}

\begin{remark}\label{rem:layers}
\begin{enumerate}
\item[]
\item It follows from the construction that each $\mathfrak L_k$ is finite. Therefore, each $\mathfrak L_k$ is an irreducible model by the Coloring Theorem.
\item No element of $\mathfrak L$ has full color.
\item Each nonempty layer increases the height of the model by $1$. Therefore, if the $k$-th layer is nonempty, then the height of $\mathfrak L_k$ is $k$.
In fact,
$\mathfrak{L}_k$ is the set of elements of $\mathfrak{L}$ whose height is $\le k$.
\item Rules (1) and (2) decrease the color of the new elements added. However, Rule (3) does not. Because of this, it is unclear whether the construction terminates.
For example, all $n$-universal models for Heyting algebras with $n>0$ are infinite because it is easy to add elements at each layer without making their color decrease (see, e.g., \cite[Sec.~3.2]{Nic06}). We will address the issue of termination in Theorem~\ref{thm:universal model is finite}.
\end{enumerate}
\end{remark}

\begin{theorem} \label{thm:universal model}
The model $\mathfrak L=(X,S,\colorsof)$ is $n$-universal.
\end{theorem}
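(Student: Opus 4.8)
The plan is to build the required embedding $e\colon Y\to X$ by recursion on height, and to check all the needed properties by a single induction on the layers $\mathfrak L_k$; this is the usual universal-model construction adapted to S-posets. Write $\coverof_Z z$ for the set of covers of $z$ in a poset $Z$. Throughout we use that, since $\mathfrak M=(Y,T,\colorsof)$ is irreducible, the Coloring Theorem (Theorem~\ref{thm:conditions for irred model}) and Remark~\ref{rem:full color not allowed} give, for every $y\in Y$: $\colorsof(y)\subseteq\colorsof(\coverof_Y y)$ (immediate from the definition of a coloring, with $\colorsof(\varnothing)=\{1,\dots,n\}$), $\colorsof(y)\neq\{1,\dots,n\}$, and if $\colorsof(y)=\colorsof(\coverof_Y y)$ then $y\in T$ and $\coverof_Y y\nsubseteq T$. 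For $y\in Y$ of height $1$ (i.e.\ $y\in\max Y$) set $e(y)=s_{\varnothing,\colorsof(y)}$ if $y\in T$ and $e(y)=r_{\varnothing,\colorsof(y)}$ otherwise; these lie in $X_1$ because $\colorsof(y)\subsetneq\{1,\dots,n\}$. Suppose now $e$ has been defined on all elements of $Y$ of height $\le k$ (for $k\ge 1$), and that on this set it is injective, preserves colors and height, satisfies $e(y)\in S\Leftrightarrow y\in T$, and sends $\coverof_Y y$ bijectively onto $\coverof_X e(y)$. Given $y$ of height $k+1$, put $\alpha=e(\coverof_Y y)$. Then $\alpha$ is an antichain in $X_k$ (as $\coverof_Y y$ is an antichain of elements of height $\le k$ on which $e$ is an order-embedding) and $\alpha\nsubseteq X_{k-1}$ (since $y$ has a cover of height exactly $k$, which $e$ sends into $X_k\setminus X_{k-1}$), and $\colorsof(\alpha)=\colorsof(\coverof_Y y)$ by color preservation. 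Now if $\colorsof(y)\subsetneq\colorsof(\alpha)$, set $e(y)=s_{\alpha,\colorsof(y)}$ when $y\in T$ and $e(y)=r_{\alpha,\colorsof(y)}$ when $y\notin T$; these are added to $X_{k+1}$ by Rules~(1) and~(2). If $\colorsof(y)=\colorsof(\alpha)$, set $e(y)=s_{\alpha,\colorsof(\alpha)}$; in this case the Coloring Theorem gives $y\in T$ and $\coverof_Y y\nsubseteq T$, hence $\alpha=e(\coverof_Y y)\nsubseteq S_k$, so this symbol is added by Rule~(3). In all cases $e(y)$ is a newly added element of $X_{k+1}\setminus X_k$, so it has height $k+1$, $\coverof_X e(y)=\alpha=e(\coverof_Y y)$, $\colorsof(e(y))=\colorsof(y)$, and $e(y)\in S\Leftrightarrow y\in T$ directly from the construction.

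It remains to extend injectivity to height $k+1$ and then to read off the embedding properties. If $y,y'$ both have height $k+1$ and $e(y)=e(y')$, then these are literally the same formal symbol: the same first index $\alpha$, whence $e(\coverof_Y y)=\alpha=e(\coverof_Y y')$ and, by injectivity on lower heights, $\coverof_Y y=\coverof_Y y'$; the same second index, whence $\colorsof(y)=\colorsof(y')$; and the same letter $r$ or $s$, whence $y\in T\Leftrightarrow y'\in T$. Condition~(2) of the Coloring Theorem for $\mathfrak M$ then forces $y=y'$. Since $e$ preserves height, an element of height $k+1$ is never identified with one of height $\le k$, so $e$ is injective on all of $Y$. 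Finally, order-preservation, order-reflection, and upness of $e(Y)$ all follow from the identity $\coverof_X e(y)=e(\coverof_Y y)$ by a chain-lifting argument: any chain $e(y)=x_0,x_1,\dots,x_m$ in $X$ with each $x_{i+1}\in\coverof_X x_i$ lifts uniquely, step by step, to a chain $y=y_0,y_1,\dots,y_m$ in $Y$ with each $y_{i+1}\in\coverof_Y y_i$, because every cover of $e(y_i)$ is $e$ of a cover of $y_i$ and $e$ is injective. This gives $\up_X e(y)=e(\up_Y y)$, hence $e(Y)$ is an upset and $e$ reflects $\le$; order-preservation is the symmetric statement, obtained by pushing a covering chain from $y$ to $y'$ in $Y$ forward through $e$.

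For uniqueness, suppose $e'\colon Y\to X$ is any poset embedding with $e'(Y)$ an upset, $(e')^{-1}(S)=T$, and $\colorsof(e'(y))=\colorsof(y)$. First, $\coverof_X e'(y)=e'(\coverof_Y y)$ for every $y$: an $x$ strictly between $e'(y)$ and $e'(z)$ would lie in $e'(Y)$ (an upset containing $e'(y)$) and reflect to an element strictly between $y$ and $z$, so images of covers are covers, and conversely every cover of $e'(y)$ lies in the upset $e'(Y)$ and reflects to a cover of $y$. Now induct on height. If $y\in\max Y$, then $e'(y)$ is maximal in $X$ (it lies in the upset $e'(Y)$ and $e'$ reflects $\le$), and $\max X=X_1$ since every element outside $X_1$ has nonempty cover set; within $X_1$ an element is uniquely determined by its color together with whether it lies in $S$, so $e'(y)=e(y)$. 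If $y$ has height $k+1$, the inductive hypothesis gives $e'(\coverof_Y y)=e(\coverof_Y y)=\alpha$, so $\coverof_X e'(y)=\alpha$; among the elements of $X$ with cover set $\alpha$ distinct ones carry distinct pairs $(\text{color},\text{membership in }S)$, and $e'(y)$ has color $\colorsof(y)$ and lies in $S$ exactly when $y\in T$, the same pair as $e(y)$. Hence $e'(y)=e(y)$, so $e'=e$.

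I expect the crux to be the well-definedness of the recursive step: verifying that the formal symbol one assigns to $y$ is among those the construction actually adds. This is precisely where irreducibility of $\mathfrak M$ enters: the only delicate case is $\colorsof(y)=\colorsof(\coverof_Y y)$, and it is admissible only because the Coloring Theorem guarantees $y\in T$ and $\coverof_Y y\nsubseteq T$, which is exactly the side condition that makes Rule~(3) supply $s_{\alpha,\colorsof(\alpha)}$; injectivity rests in the same way on Condition~(2). The remaining work — propagating colors, $S$-membership, cover-sets, and heights through the layered induction — is routine but must be organized so that each claimed property is available precisely when the next one is proved.
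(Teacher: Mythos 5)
Your proposal is correct and follows essentially the same route as the paper: define $e$ recursively on height, use the Coloring Theorem to verify that Rules (1)--(3) supply exactly the formal symbols needed (with irreducibility entering precisely in the case $\colorsof(y)=\colorsof(\coverof y)$), and derive the embedding and uniqueness properties from the identity $\coverof e(y)=e(\coverof y)$. You are somewhat more explicit than the paper about injectivity (via Condition~(2) of Theorem~\ref{thm:conditions for irred model}) and about why any competing embedding $e'$ must agree with $e$, but these are elaborations of the same argument rather than a different approach.
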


\begin{proof}
Let $\mathfrak M=(Y,T,\colorsof)$ be a finite irreducible model.
We prove by induction on the height of $Y$ that there is a unique embedding $e:Y \to X$ such that $e(Y)$ is an upset of $X$, $e^{-1}(S)=T$, and $\colorsof (e(y))=\colorsof(y)$ for all $y \in Y$.
If $Y$ is empty, there is nothing to prove.

If the height of $Y$ is $1$, then the partial order on $Y$ is the identity, and we define
\begin{align*}
e(y)=
\begin{cases}
r_{\emptyset,\colorsof(y)} \quad & \mbox{if }y \notin T, \\
s_{\emptyset,\colorsof(y)} \quad & \mbox{if }y \in T.
\end{cases}
\end{align*}
It is straightforward to see that $e(Y)$ is an upset of $X$, that $e^{-1}(S)=T$, and that $\colorsof (e(y))=\colorsof(y)$ for all $y \in Y$.
Thus, the embedding must be unique by Remark~\ref{rem:full color not allowed}(2).

If the height of $Y$ is $m+1$, let $Y'$ be the set of elements of $Y$ of height less than or equal to $m$ and let $T'=T \cap Y'$.
Since $\mathfrak M$ is an irreducible model, so is $\mathfrak M'=(Y',T',\colorsof_{|Y'})$.
Therefore, by the inductive hypothesis, there is a unique embedding $e':Y' \to X$ such that $e'(Y')$ is an upset of $X$, $(e')^{-1}(S)=T'$ and $\colorsof(e'(y))=\colorsof(y)$ for all $y \in Y'$. For each $y \in Y \setminus Y'$ we have $\coverof y \subseteq Y'$, so we can define $e:Y \to X$ by extending $e'$ as follows:
\begin{align*}
e(y)=
\begin{cases}
r_{e'(\coverof y),\colorsof(y)} \quad & \mbox{if }y \notin T, \\
s_{e'(\coverof y),\colorsof(y)} \quad & \mbox{if }y \in T.
\end{cases}
\end{align*}
To see that $e$ is well defined,
since $e'$ embeds $Y'$ into $X$, we have that
$e'(\coverof y)$ is an antichain in $X_m$ which is not entirely contained in $X_{m-1}$.
As $\mathfrak M$ is an irreducible model, by Theorem~\ref{thm:conditions for irred model}, if $y \notin T$, then $\colorsof(y) \subset \colorsof(\coverof y)=\colorsof(e'(\coverof y))$.
So Rule (1) applies, and hence $r_{e'(\coverof y),\colorsof(y)}$ exists in $X$.
Suppose $y \in T$. We have $\colorsof(y) \subset\colorsof(\coverof y)$ or $\colorsof(y)=\colorsof(\coverof y)$. In the former case, $s_{e'(\coverof y),\colorsof(y)}$ exists in $X$
by Rule (2).
In the latter case, since $\mathfrak M$ is an irreducible model, Theorem~\ref{thm:conditions for irred model} gives $\coverof y \nsubseteq T'$. Therefore, $e'(\coverof y) \nsubseteq S$, so $ s_{e'(\coverof y),\colorsof(y)}$ exists in $X$
by Rule (3).
Thus, $e$ is well defined.

It follows from the construction of $e$ and $\mathfrak{L}$ that $y \in \coverof x $ iff $e(y) \in \coverof e(x)$. Therefore, an easy induction shows that for all $x,y \in Y$ we have $x \le y$ iff $e(x) \le e(y)$. Thus, $e$ is an embedding.
Moreover, the definition of $e$ implies that $e(Y)$ is an upset, that $e^{-1}(S)=T$, and that $\colorsof(e(y))=e(y)$. Furthermore,
we are forced to extend $e'$ in this way if we want $e^{-1}(S)=T$ and $\colorsof (e(y))=\colorsof(y)$ for all $y \in Y$. Thus, $e$ is unique.
\end{proof}

As we pointed out in Remark~\ref{rem:layers}(1), each layer of $\mathfrak L$ is finite. We next show a lot stronger result, that the construction of $\mathfrak L$ terminates, and hence that $\mathfrak L$ is finite. For this we introduce the following notation.

For $d\le n$, let
\[
X^d= \{ x \in X \mid |\colorsof(x)|=d \} \mbox{ and } X^{\geq d}= \{ x \in X \mid |\colorsof(x)| \geq d \}.
\]
We also let $R=X \setminus S$, and define $S^d$, $S^{\geq d}$, $R^d$, and $R^{\geq d}$ similarly.

In addition, let
\[
S^d_= =\{ s_{\alpha, \colorsof(\alpha)} \mid |\colorsof(\alpha)|=d \} \mbox{ and } S^d_<=\{ s_{\alpha, \sigma}  \mid \sigma \subset \colorsof(\alpha)
, \;|\sigma|=d
 \}.
\]
We then have $S^d=S^d_= \cup S^d_<$.

Observe that since $X=R \cup S$, we have
\[
X^{\geq d}=X^{\geq d+1} \cup X^d=X^{\geq d+1} \cup R^d \cup S_=^d \cup S_<^d.
\]
Also observe that
\[
X^{\geq n}=X^n=\{x \in X \mid \colorsof(x)=\{1, \ldots, n \} \}=\varnothing.
\]

\begin{lemma}\label{lem:univ model finiteness}
Let $\mathfrak{L}=(X,S,\colorsof)$ be the $n$-universal model and $d<n$.
\begin{enumerate}
\item There is a one-to-one map $R^d \to \powerset(X^{\geq d+1}) \times \powerset(\{1, \ldots, n \})$. Therefore, if $X^{\geq d+1}$ is finite, then so is $R^d$.
\item There is a one-to-one map $S^d_< \to \powerset(X^{\geq d+1}) \times \powerset(\{1, \ldots, n \})$. Therefore, if $X^{\geq d+1}$ is finite, then so is $S^d_<$.
\item If $C \subseteq S^d_=$ is a chain, then there is a one-to-one map $C \to R^{\geq d}$. Therefore, if $R^{\geq d}$ is finite, then $S^d_=$ has finite height.
\end{enumerate}
\end{lemma}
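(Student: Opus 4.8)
The plan is to prove the three items by exhibiting explicit injective maps, using the fact that every element of $X$ is a formal symbol $r_{\alpha,\sigma}$ or $s_{\alpha,\sigma}$ that is completely determined by the pair consisting of its set of covers $\alpha$ (an antichain of $X$) and its color $\sigma=\colorsof(\cdot)$.

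For (1) and (2), the key preliminary observation is that whenever $r_{\alpha,\sigma}\in R^d$ or $s_{\alpha,\sigma}\in S^d_<$ we have $\sigma\subsetneq\colorsof(\alpha)=\bigcap\{\colorsof(x)\mid x\in\alpha\}$: for layer-$1$ symbols this is precisely the defining restriction $\sigma\subsetneq\{1,\dots,n\}=\colorsof(\varnothing)$, and at later layers it is Rule~(1), respectively Rule~(2), of the recursive step. Consequently every $x\in\alpha$ satisfies $\colorsof(x)\supseteq\colorsof(\alpha)\supsetneq\sigma$, so $|\colorsof(x)|\geq d+1$, i.e.\ $\alpha\subseteq X^{\geq d+1}$. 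Hence $r_{\alpha,\sigma}\mapsto(\alpha,\sigma)$ defines a map $R^{d}\to\powerset(X^{\geq d+1})\times\powerset(\{1,\dots,n\})$, and $s_{\alpha,\sigma}\mapsto(\alpha,\sigma)$ defines a map $S^{d}_{<}\to\powerset(X^{\geq d+1})\times\powerset(\{1,\dots,n\})$; each is one-to-one because the pair $(\alpha,\sigma)$ recovers the symbol. The finiteness statements then follow immediately, since $\powerset(X^{\geq d+1})$ is finite when $X^{\geq d+1}$ is.

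For (3), I would use Rule~(3): the symbol $s_{\alpha,\colorsof(\alpha)}$ is introduced only when $\alpha\nsubseteq S$, so one may fix a choice function $\rho$ with $\rho\bigl(s_{\alpha,\colorsof(\alpha)}\bigr)\in\alpha\setminus S=\alpha\cap R$. Since $\rho(c)\in\alpha$ and $\colorsof(\alpha)=\bigcap_{x\in\alpha}\colorsof(x)$, we get $\colorsof(\rho(c))\supseteq\colorsof(\alpha)$, hence $|\colorsof(\rho(c))|\geq d$ and $\rho(c)\in R^{\geq d}$. The remaining point is that $\rho$ is one-to-one on each chain $C\subseteq S^{d}_{=}$: if $c,c'\in C$ with $c<c'$ and $\rho(c)=\rho(c')$, then $\rho(c)$ is a cover of $c$ while $\rho(c')=\rho(c)$ is a cover of $c'$, so $c<c'<\rho(c)$, contradicting that $\rho(c)$ covers $c$. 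Therefore $|C|\leq|R^{\geq d}|$ for every chain $C\subseteq S^{d}_{=}$, which is exactly the assertion that $S^{d}_{=}$ has finite height whenever $R^{\geq d}$ is finite.

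None of these arguments has a genuinely hard step; the care needed is purely in the bookkeeping. In particular one should observe that the base layer ($\alpha=\varnothing$, $\colorsof(\varnothing)=\{1,\dots,n\}$) is subsumed by the general discussion, and that $S^{d}_{=}$ contains no layer-$1$ element, since the layer-$1$ symbols $s_{\varnothing,\sigma}$ satisfy $\sigma\subsetneq\colorsof(\varnothing)$ and hence lie in $S^{d}_{<}$ rather than $S^{d}_{=}$; this guarantees that Rule~(3) genuinely applies to every member of $S^{d}_{=}$. The one-line cover-versus-order argument in (3) is the conceptual core, and even it is routine.
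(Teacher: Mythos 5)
Your proof is correct and follows essentially the same route as the paper: in (1) and (2) the injection $x\mapsto(\coverof x,\colorsof(x))$ with the observation that $\coverof x\subseteq X^{\geq d+1}$, and in (3) a choice of a cover in $\alpha\cap R^{\geq d}$ supplied by Rule (3), injective on chains because distinct comparable elements have disjoint cover sets. The extra bookkeeping you include (why $\alpha\subseteq X^{\geq d+1}$, and why every element of $S^d_=$ genuinely arises from Rule (3)) only makes explicit details the paper leaves implicit.
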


\begin{proof}
(1) Define a map $R^d \to \powerset(X^{\geq d+1}) \times \powerset(\{1, \ldots, n \})$ by sending $x \in R^d$ to $(\coverof x, \colorsof(x))$. Since $x \in R^d$,
by Rule (1),
we have $x=r_{\coverof x, \colorsof(x)}$ with $\coverof x \subseteq X^{\geq d+1}$. Therefore, the map is one-to-one. Since $\powerset(\{1, \ldots, n \})$ is finite, if $X^{\geq d+1}$ is finite, then so is $R^d$.

(2) Similarly to (1), define a one-to-one map $S^d_< \to \powerset(X^{\geq d+1}) \times \powerset(\{1, \ldots, n \})$ by sending $x \in S^d_<$ to $(\coverof x, \colorsof(x))$.
By Rule (2),
$x=s_{\coverof x, \colorsof(x)}$ with $\coverof x \subseteq X^{\geq d+1}$. Therefore, the map is one-to-one. Thus, if $X^{\geq d+1}$ is finite, then so is $S^d_<$.

(3) Let $C$ be a chain in $S^d_=$.
By Rule (3),
for each $x \in C$ there is $r \in \coverof x \cap R^{\geq d}$. Choosing $r_x \in \coverof x \cap R^{\geq d}$ for each $x \in C$ and sending $x$ to $r_x$ defines a map $C \to R^{\geq d}$. This map is one-to-one because if $x, y \in C$ with $x \neq y$, then $x<y$ or $y<x$; and in either case, $\coverof x \cap \coverof y = \emptyset$. Thus, if $R^{\geq d}$ is finite, every chain in $S^d_=$ has to be finite with cardinality at most $|R^{\geq d}|$. In particular, if $R^{\geq d}$ is finite, then $S^d_=$ has finite height.
\end{proof}

\begin{theorem}\label{thm:universal model is finite}
The $n$-universal model $\mathfrak{L}$ is finite.
\end{theorem}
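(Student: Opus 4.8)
The plan is to prove by downward induction on $d$, from $d=n$ down to $d=0$, that the set $X^{\ge d}$ is finite; since $X^{\ge 0}=X$, this gives the theorem. The base case $d=n$ is immediate, because $X^{\ge n}=X^n=\varnothing$ as observed just before Lemma~\ref{lem:univ model finiteness} (equivalently, by Remark~\ref{rem:layers}(2), no element of $\mathfrak L$ has full color). For the inductive step assume $X^{\ge d+1}$ is finite; I will deduce that $X^{\ge d}$ is finite, working from the decomposition $X^{\ge d}=X^{\ge d+1}\cup R^d\cup S^d_<\cup S^d_=$ recorded in the text.

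By Lemma~\ref{lem:univ model finiteness}(1) and~(2), both $R^d$ and $S^d_<$ are finite, since $X^{\ge d+1}$ is. Hence $A_d:=X^{\ge d+1}\cup R^d\cup S^d_<$ is finite, and therefore so is $R^{\ge d}=R^{\ge d+1}\cup R^d$, because $R^{\ge d+1}\subseteq X^{\ge d+1}$. Lemma~\ref{lem:univ model finiteness}(3) then shows that $S^d_=$ has finite height; write $N:=|R^{\ge d}|$, so that every chain in $S^d_=$ has at most $N$ elements. The remaining, and only genuinely delicate, task is to upgrade ``$S^d_=$ has finite height'' to ``$S^d_=$ is finite''.

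For this I would run a second, nested induction. For $x\in S^d_=$ let $h(x)$ be the greatest length of a chain in $S^d_=$ having least element $x$, so $1\le h(x)\le N$. Two observations drive the argument. First, each $x\in S^d_=$ is produced by Rule~(3), so $x=s_{\coverof x,\,\colorsof(\coverof x)}$ and hence $x$ is determined by the antichain $\coverof x$; thus $x\mapsto\coverof x$ is one-to-one on $S^d_=$. Second, every member of $\coverof x$ has color of size at least $|\colorsof(\coverof x)|=d$, so $\coverof x\subseteq X^{\ge d}=A_d\cup S^d_=$, and any member of $\coverof x$ lying in $S^d_=$ is strictly above $x$ and therefore has $h$-value strictly less than $h(x)$. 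Writing $T_m:=\{x\in S^d_=\mid h(x)\le m\}$, the second observation gives $\coverof x\subseteq A_d\cup T_{m-1}$ whenever $h(x)=m$. So if $T_{m-1}$ is finite, then $x\mapsto\coverof x$ embeds $\{x\in S^d_=\mid h(x)=m\}$ into the finite set $\powerset(A_d\cup T_{m-1})$, whence $T_m$ is finite. Since $T_0=\varnothing$ and $S^d_==T_N$, it follows that $S^d_=$ is finite.

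Combining, $X^{\ge d}=A_d\cup S^d_=$ is finite, which completes the induction on $d$; taking $d=0$ yields that $X$ is finite. As flagged above, the one real obstacle is that Rule~(3) gives no a priori control on $S^d_=$; the device for overcoming it is the ``no circularity'' observation that the cover set of an element of $S^d_=$ can meet $S^d_=$ only in elements of strictly smaller $S^d_=$-height, which lets the nested induction close.
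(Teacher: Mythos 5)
Your proposal is correct and follows the paper's proof very closely: the same downward induction on $d$, the same decomposition $X^{\ge d}=X^{\ge d+1}\cup R^d\cup S^d_<\cup S^d_=$, and the same appeal to Lemma~\ref{lem:univ model finiteness} to dispose of $R^d$ and $S^d_<$ and to bound the height of $S^d_=$. The one place you diverge is in upgrading ``finite height'' to ``finite''. The paper concludes that all of $X^{\ge d}$ has finite height, say $m$, notes that $X^{\ge d}$ is an upset of $X$ (colors only grow upward), and then invokes Remark~\ref{rem:layers}(1) and (3): elements of height $\le m$ lie in the layer $X_m$, which is already known to be finite. You instead run a self-contained nested induction on the height function $h$ inside $S^d_=$, using that $x\mapsto\coverof x$ is injective on $S^d_=$ and that $\coverof x$ lands in $A_d\cup T_{h(x)-1}$; this is sound (the injectivity holds because an element of $S^d_=$ is $s_{\alpha,\colorsof(\alpha)}$ and so is determined by $\alpha=\coverof x$ alone, and the inclusion $\coverof x\subseteq X^{\ge d}$ holds by monotonicity of the coloring). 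Your route re-derives locally what the paper gets for free from the finiteness of the layers $\mathfrak L_k$, at the cost of a slightly longer argument but with the benefit of not depending on Remark~\ref{rem:layers}; either way the inductive step closes and the theorem follows.
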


\begin{proof}
We show by induction that $X^{\geq d}$ is finite for each $0 \le d \le n$. This will imply that $X=X^{\geq 0}$ is finite. We proceed by reverse induction, decreasing $d$ at each step starting from $d=n$. For the base case, we already observed that $X^{\geq n}=\emptyset$.

For the inductive step, let $d < n$ and $X^{\geq d+1}$ be finite. We first show that $X^{\geq d}$ has finite height. Since $X^{\geq d}=X^{\geq d+1} \cup R^d \cup S_=^d \cup S_<^d$, it is sufficient to observe that $X^{\geq d+1}$, $R^d$, $S_=^d$, and $S_<^d$ have finite height. By inductive hypothesis, $X^{\ge d+1}$ is finite. Since $X^{\ge d+1}$ is finite, Lemma~\ref{lem:univ model finiteness} implies that $R^d$ and $S^d_<$ are finite. Therefore, $X^{\ge d+1}$, $R^d$, and $S^d_<$ have finite height. Since $R^{\geq d} \subseteq R^d \cup X^{\geq d+1}$ and both $R^d$, $X^{\geq d+1}$ are finite, so is $R^{\geq d}$. By Lemma~\ref{lem:univ model finiteness}(3), $S^d_=$ has finite height. Thus, $X^{\geq d}$ has finite height, say $m$.
Remark~\ref{rem:layers}(3) then implies that $X^{\geq d} \subseteq X_m$. Since $X_m$ is finite by Remark~\ref{rem:layers}(1), $X^{\geq d}$ is finite.
\end{proof}

\begin{remark}\label{rem:univ model is irreducible}
Since the $n$-universal model $\mathfrak{L}$ is finite, it coincides with $\mathfrak{L}_k$ for some $k$. Therefore, $\mathfrak{L}$ is irreducible by Remark~\ref{rem:layers}(1).
\end{remark}

We are ready to prove our main result.

\begin{theorem}\label{thm:nis locally finite}
$\nis$ is locally finite.
\end{theorem}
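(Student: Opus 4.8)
The plan is to deduce local finiteness from the two main facts already in hand: that $\nis$ is generated by its finite algebras (Theorem~\ref{thm:nis generated by finite algebras}), and that the $n$-universal model $\mathfrak{L}=(X,S,\colorsof)$ is finite (Theorem~\ref{thm:universal model is finite}). Fix $n$ and let $F_n$ be the free $n$-generated nuclear implicative semilattice on generators $g_1,\dots,g_n$. I will show that $F_n$ embeds into the nuclear implicative semilattice $A_{\mathfrak L}:=(\upsets(X),j_S)$, which is finite by Theorem~\ref{thm:universal model is finite}. Once $F_n$ is known to be finite, every $n$-generated nuclear implicative semilattice, being a homomorphic image of $F_n$, is finite; since $n$ is arbitrary, $\nis$ is locally finite.

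First I would set up the map. Since $\mathfrak{L}$ is irreducible (Remark~\ref{rem:univ model is irreducible}), $A_{\mathfrak L}$ is generated by $U_1,\dots,U_n$, where $U_i=\{x\in X\mid i\in\colorsof(x)\}$. Let $\phi\colon F_n\to A_{\mathfrak L}$ be the unique nuclear homomorphism with $\phi(g_i)=U_i$. The heart of the proof is to check that $\phi$ is injective.

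To prove injectivity, suppose $p\neq q$ in $F_n$, say $p=t_1(g_1,\dots,g_n)$ and $q=t_2(g_1,\dots,g_n)$, so that $t_1=t_2$ is not derivable from the axioms of $\nis$. By Theorem~\ref{thm:nis generated by finite algebras} there is a finite nuclear implicative semilattice refuting $t_1=t_2$; replacing it by the nuclear subalgebra generated by the witnessing elements (finite, since the ambient algebra is finite) gives a finite $n$-generated nuclear implicative semilattice $\mathfrak B$ with generators $b_1,\dots,b_n$, together with the homomorphism $\psi\colon F_n\to\mathfrak B$ determined by $g_i\mapsto b_i$, satisfying $\psi(p)\neq\psi(q)$. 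By Remark~\ref{remark:finite n-gen and irred models}, $\mathfrak B$ corresponds to a finite irreducible model $\mathfrak M=(Y,T,\colorsof)$ in which $b_i$ corresponds to $\{y\in Y\mid i\in\colorsof(y)\}$. By Theorem~\ref{thm:universal model} there is an embedding $e\colon Y\to X$ with $e(Y)$ an upset of $X$, $e^{-1}(S)=T$, and $\colorsof(e(y))=\colorsof(y)$ for all $y$. Such an $e$ is a total one-to-one S-morphism (it is a poset embedding onto an upset, hence a total \kmor{}, and by Remark~\ref{rem:total S-morp conditions} a total \kmor{} with $e^{-1}(S)=T$ is an S-morphism), so by Proposition~\ref{prop:one-to-one and onto dual correspondence}(2) its dual $e^*=e^{-1}\colon A_{\mathfrak L}\to(\upsets(Y),j_T)\cong\mathfrak B$ is a surjective nuclear homomorphism, and $e^*(U_i)=b_i$ because $e$ preserves the coloring. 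Then $e^*\circ\phi$ and $\psi$ are homomorphisms out of the free algebra $F_n$ that agree on $g_1,\dots,g_n$, so $e^*\circ\phi=\psi$; hence $e^*(\phi(p))=\psi(p)\neq\psi(q)=e^*(\phi(q))$, forcing $\phi(p)\neq\phi(q)$. Thus $\phi$ is injective, $F_n$ embeds into the finite algebra $A_{\mathfrak L}$, and the theorem follows.

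The genuinely hard work — proving that the recursive construction of $\mathfrak{L}$ terminates — is already done in Theorem~\ref{thm:universal model is finite}, so this last step is essentially a diagram chase glued together by the coloring correspondence. The only points requiring care are the reduction in Theorem~\ref{thm:nis generated by finite algebras} to a finite \emph{$n$-generated} refuting algebra (handled by passing to a nuclear subalgebra), the verification that the universal-model embedding $e$ really is an S-morphism, and the identification $(\upsets(Y),j_T)\cong\mathfrak B$ sending $U_i^Y$ to $b_i$; none of these is deep.
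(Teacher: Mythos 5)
Your proof is correct, and it rests on exactly the same three pillars as the paper's: Theorem~\ref{thm:nis generated by finite algebras} (generation by finite algebras), Theorem~\ref{thm:universal model is finite} (finiteness of $\mathfrak L$), and Theorem~\ref{thm:universal model} together with Remark~\ref{remark:finite n-gen and irred models}. The difference is in the packaging. The paper forms the inverse system $\{\mathfrak A_\alpha\}$ of finite homomorphic images of $\mathfrak F_n$, notes that the dual direct system of irreducible models has $\mathfrak L$ as its terminal element, identifies the inverse limit of $\{\mathfrak A_\alpha\}$ with $\mathfrak L^*$ via a cited categorical fact, and then invokes a second cited fact that in a variety generated by its finite algebras the free algebra embeds into that inverse limit. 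You unwind this limit machinery into a single direct argument: define $\phi\colon\mathfrak F_n\to\mathfrak L^*$ on generators and verify injectivity by separating any two distinct elements through a finite $n$-generated quotient, which universality realizes as $e^*$ for an upset embedding $e$ into $\mathfrak L$. What your route buys is self-containedness (no appeal to the two external references) at the price of checking by hand that $e$ is a total one-to-one S-morphism and that $e^*$ matches generators, both of which you do correctly; as a bonus you get the isomorphism $\mathfrak F_n\cong\mathfrak L^*$ essentially for free (your $\phi$ is also onto since $\mathfrak L$ is irreducible by Remark~\ref{rem:univ model is irreducible}), which the paper records only in a subsequent remark. The one step worth flagging is your application of Theorem~\ref{thm:nis generated by finite algebras} to the equation $t_1=t_2$ rather than to an equation of the form $t=1$: this is fine because the theorem's statement (generation by finite algebras) covers arbitrary equations, or more concretely because $t_1=t_2$ fails exactly where $(t_1\to t_2)\wedge(t_2\to t_1)=1$ fails.
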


\begin{proof}
For each $n$ let $\mathfrak F_n$ be the free $n$-generated nuclear implicative semilattice. It is sufficient to prove that $\mathfrak F_n$ is finite. Let $\{\mathfrak A_\alpha\}$ be the inverse system of finite homomorphic images of $\mathfrak F_n$. Then each $\mathfrak A_\alpha$ is $n$-generated. The
bonding
maps of this inverse system are homomorphisms mapping generators to generators. Let $\mathfrak M_\alpha$ be the finite irreducible model corresponding to $\mathfrak A_\alpha$. Then $\{\mathfrak M_\alpha\}$ is a direct system of finite irreducible models. By Remark~\ref{remark:finite n-gen and irred models}, the maps of this direct system are S-morphisms preserving the coloring. By Theorem~\ref{thm:universal model is finite}, the $n$-universal model $\mathfrak{L}$ is finite. Therefore, $\mathfrak{L}$ is the terminal element of $\{\mathfrak M_\alpha\}$. By~\cite[Ex.~11.4.5]{AHS06}, the direct limit of $\{\mathfrak M_\alpha\}$ is isomorphic to $\mathfrak{L}$. Thus, the inverse limit of $\{\mathfrak A_\alpha\}$ is isomorphic to $\mathfrak{L}^*$. By Theorem~\ref{thm:nis generated by finite algebras}, $\nis$ is generated by its finite algebras. Consequently, $\mathfrak{F}_n$ embeds into the inverse limit of $\{\mathfrak A_\alpha\}$ (see, e.g., \cite[Prop.~2.1]{BM09}). Therefore, since $\mathfrak{L}^*$ is finite, so must be $\mathfrak{F}_n$. Thus, $\nis$ is locally finite.
\end{proof}

\begin{remark}
As follows from the above proof, $\mathfrak{F}_n$ embeds into $\mathfrak{L}^*$. In fact, $\mathfrak{F}_n$ is isomorphic to $\mathfrak{L}^*$. Indeed, since $\mathfrak{L}$ is finite, it is irreducible
by Remark~\ref{rem:univ model is irreducible}.
Therefore, $\mathfrak{L}^*$ is $n$-generated, and so it is a finite quotient of $\mathfrak{F}_n$.
Thus, $|\mathfrak{L}^*| \le |\mathfrak{F}_n|$. On the other hand, since $\mathfrak{F}_n$ embeds into $\mathfrak{L}^*$, we have $|\mathfrak{F}_n| \le |\mathfrak{L}^*|$. Consequently, the embedding of $\mathfrak{F}_n$ into $\mathfrak{L}^*$ is also onto, hence an isomorphism.
The isomorphism maps the free generators of $\mathfrak{F}_n$ to the upsets $U_1, \ldots, U_n$ of $\mathfrak L$ defined by the coloring $\colorsof$.
\end{remark}

\section{Bounded case}\label{sec:bounded case}

As we pointed out in Remark~\ref{rem:bounds}(1), every implicative semilattice has a top, but may not have a bottom. We call an implicative semilattice \emph{bounded} if it has a bottom, and an implicative semilattice homomorphism \emph{bounded} if it preserves the bottom. Let $\isbot$ be the category of bounded implicative semilattices and bounded implicative semilattice homomorphisms. Diego's theorem remains true for $\isbot$, and so $\isbot$ is locally finite. In this section we show that Theorem~\ref{thm:nis locally finite} also remains true for the category $\nisbot$ of bounded nuclear implicative semilattices and bounded nuclear
homomorphisms.

Clearly each finite implicative semilattice is bounded, so it has a bottom element which we denote by $0$. However, implicative semilattice homomorphisms between finite implicative semilattices do not have to preserve $0$. The next proposition gives a dual characterization of when they do. For this we recall that a subset $Y$ of a poset $(X,\le)$ is \emph{cofinal} if $\dn Y=X$. If $X$ is finite, then it is obvious that $Y$ is cofinal iff $\max X\subseteq Y$.

\begin{proposition}\label{prop:cofinal domain dual to preserving 0}
Let $f:X \to Y$ be a \kmor{} between finite posets with domain $D \subseteq X$. The implicative semilattice homomorphism $f^*:Y^* \to X^*$ is bounded iff $D$ is cofinal in $X$.
\end{proposition}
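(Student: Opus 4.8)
The plan is to reduce the statement to a one-line computation, since $f^*$ is already an implicative semilattice homomorphism (it preserves $\meet$, $\to$, and $1$), so it is \emph{bounded} precisely when it preserves the bottom element $0$. The bottom of $Y^*=\upsets(Y)$ is the empty upset $\varnothing$, and likewise the bottom of $X^*=\upsets(X)$ is $\varnothing$. Hence the whole content of the proposition is to determine when $f^*(\varnothing)=\varnothing$.

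First I would unfold the definition of the dual homomorphism: $f^*(V)=X\setminus\dn f^{-1}(Y\setminus V)$ for each $V\in\upsets(Y)$. Evaluating at $V=\varnothing$ gives $f^*(\varnothing)=X\setminus\dn f^{-1}(Y)=X\setminus\dn D$, because $f^{-1}(Y)=D$ is by definition the domain of $f$. Therefore $f^*(\varnothing)=\varnothing$ if and only if $\dn D=X$, which is exactly the assertion that $D$ is cofinal in $X$. (For finite $X$ this can be restated as $\max X\subseteq D$, as noted just before the proposition.)

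There is no real obstacle in this argument; the only point requiring a word of care is to record explicitly that, since $f^*$ is already an implicative semilattice homomorphism, being bounded amounts to nothing more than preserving $0$, so that the single computation above settles both implications simultaneously.
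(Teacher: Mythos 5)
Your argument is correct and is essentially identical to the paper's proof: both evaluate $f^*(\varnothing)=X\setminus\dn f^{-1}(Y\setminus\varnothing)=X\setminus\dn D$ and observe that this is empty exactly when $\dn D=X$. Nothing further is needed.
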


\begin{proof}
The bottom element of $\upsets(X)$ is $\emptyset$. By the definition of $f^*$, we have
\[
f^*(\emptyset)=X \setminus \dn f^{-1}(Y \setminus \emptyset )=X \setminus \dn D.
\]
Therefore, $f^*(\emptyset)=\emptyset$ iff $X \setminus \dn D=\emptyset$ iff $\dn D = X$.
\end{proof}

\begin{definition}
Let $\nisfbot$ be the full subcategory of $\nisbot$ consisting of finite nuclear implicative semilattices. Let also $\spfbot$ be the category of finite S-posets and \smor{}s with cofinal domain.
\end{definition}

Let $(\;)^*:\spf \to \nisf$ and $(\;)_*:\nisf \to \spf$ be the functors defined in Section~\ref{sec: finite duality nuclear implicative semilattices}. As a consequence of Theorem~\ref{thm:duality for nisf} and Proposition~\ref{prop:cofinal domain dual to preserving 0} we obtain that restricting these functors to $\spfbot$ and $\nisfbot$ yields the following dual equivalence.

\begin{theorem}
$\nisfbot$ is dually equivalent to $\spfbot$.
\end{theorem}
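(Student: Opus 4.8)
The plan is to deduce this from the unbounded duality of Theorem~\ref{thm:duality for nisf} together with Proposition~\ref{prop:cofinal domain dual to preserving 0}, by checking that the contravariant functors $(\;)^*:\spf\to\nisf$ and $(\;)_*:\nisf\to\spf$ of Section~\ref{sec: finite duality nuclear implicative semilattices} restrict to functors between $\spfbot$ and $\nisfbot$, and that the natural isomorphisms $\alpha$ and $\varepsilon$ remain natural isomorphisms after the restriction.

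On objects there is nothing to do, since $\nisfbot$ and $\nisf$ have the same objects (every finite implicative semilattice is bounded) and likewise $\spfbot$ and $\spf$ have the same objects. For morphisms, first suppose $f:(X,S)\to(Y,T)$ is an \smor{} with cofinal domain. By Proposition~\ref{prop:cofinal domain dual to preserving 0} the implicative semilattice homomorphism $f^*$ preserves the bottom, so $f^*$ is a morphism of $\nisfbot$; thus $(\;)^*$ restricts to a functor $\spfbot\to\nisfbot$. Conversely, suppose $h:(A,j)\to(B,k)$ is a bounded nuclear homomorphism. Under the isomorphisms $\alpha_A,\alpha_B$ of Theorem~\ref{thm:duality for nisf} the map $h$ corresponds to $(h_*)^*$; since any isomorphism of nuclear implicative semilattices preserves the bottom, $(h_*)^*$ is bounded, and Proposition~\ref{prop:cofinal domain dual to preserving 0} then forces the domain of $h_*$ to be cofinal. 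Hence $(\;)_*$ restricts to a functor $\nisfbot\to\spfbot$. The restricted functors still preserve identities and reverse composition because the unrestricted ones do.

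It then remains to note that $\alpha_A:\mathfrak A\to(\mathfrak A_*)^*$ and $\varepsilon_X:\mathfrak X\to(\mathfrak X^*)_*$ are isomorphisms in the bounded categories: $\alpha_A$ is an isomorphism of (bounded) nuclear implicative semilattices, hence bounded; and $\varepsilon_X$ is an S-poset isomorphism, hence a total, in particular cofinal-domain, \smor{}. Naturality with respect to the restricted morphism classes is a special case of the naturality already established. Therefore the restricted functors exhibit a dual equivalence between $\nisfbot$ and $\spfbot$.

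I expect the only non-formal point to be the implicit requirement that $\spfbot$ be closed under composition, so that the restricted $(\;)_*$ and $(\;)^*$ genuinely land there. This follows either by transporting through Proposition~\ref{prop:cofinal domain dual to preserving 0} the trivial fact that a composite of bounded homomorphisms is bounded, or directly: a \kmor{} sends a maximal point lying in its domain to a maximal point of the target (by Definition~\ref{def:kmor}(2)), so if $f:X\to Y$ has cofinal domain $D$ and $g:Y\to Z$ has cofinal domain $D'$, then $\max X\subseteq D$ and $f(\max X)\subseteq\max Y\subseteq D'$, whence $\max X\subseteq f^{-1}(D')$ and the domain $f^{-1}(D')$ of $gf$ is cofinal in $X$.
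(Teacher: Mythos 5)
Your proposal is correct and follows exactly the route the paper takes: the paper derives this theorem in one line as a consequence of Theorem~\ref{thm:duality for nisf} and Proposition~\ref{prop:cofinal domain dual to preserving 0} by restricting the functors $(\;)^*$ and $(\;)_*$, and your write-up simply supplies the routine verifications (including the correct observation that cofinality of domains is preserved under composition of \kmor{}s, via Definition~\ref{def:kmor}(2)) that the paper leaves implicit.
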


Let $A$ be a bounded implicative semilattice. We call a subalgebra $B$ of $A$ \emph{bounded} if $B$ contains the bottom element of $A$.

\begin{theorem}\label{thm:dual maximal bounded nuclear subalg}
Let $(X,S)$ be a finite S-poset. Maximal bounded nuclear subalgebras of $(\upsets(X), j_S)$ correspond to partial correct partitions $\sim$ of $X$ with domain $D$ such that either
\begin{enumerate}
\item $D=X \setminus \{ x \}$ for some $x \in X \setminus \max X$, $\sim$ is the identity relation on $D$, and $x \notin S$ or $\coverof x \subseteq S$, or
\item $D=X$, the only non-trivial equivalence class of $\sim$ is $\{x,y\}$ with $\coverof x=\coverof y$, and $x \in S$ iff $y \in S$.
\end{enumerate}
\end{theorem}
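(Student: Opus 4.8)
The plan is to deduce Theorem~\ref{thm:dual maximal bounded nuclear subalg} from Theorem~\ref{thm:dual maximal nuclear subalg} by carrying along the boundedness constraint via Proposition~\ref{prop:cofinal domain dual to preserving 0}. First I would record the dual meaning of ``bounded'': a nuclear subalgebra $\mathfrak B$ of $(\upsets(X),j_S)$ is bounded iff it contains the bottom element $\varnothing$, i.e.\ iff the inclusion $\mathfrak B\hookrightarrow\upsets(X)$ is a bounded homomorphism. If $\sim$ is the partial correct partition corresponding to $\mathfrak B$, with domain $D$, then this inclusion is dual to the quotient \kmor{} with domain $D$, so by Proposition~\ref{prop:cofinal domain dual to preserving 0} it is bounded iff $D$ is cofinal in $X$; since $X$ is finite this just means $\max X\subseteq D$. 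Hence bounded nuclear subalgebras correspond exactly to nuclear partial correct partitions $\sim$ with $\max X\subseteq D$, and maximal bounded nuclear subalgebras correspond to those maximal among such.

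Next I would check that the partitions in (1) and (2) do correspond to maximal bounded nuclear subalgebras. Both are nuclear by the computations in the proof of Theorem~\ref{thm:dual maximal nuclear subalg}: the identity relation on $X\setminus\{x\}$ is nuclear iff $x\notin S$ or $\coverof x\subseteq S$, and a total correct partition whose only nontrivial class $\{x,y\}$ is contained in or disjoint from $S$ is nuclear. Both are bounded: in (1) the domain $X\setminus\{x\}$ contains $\max X$ because $x\in X\setminus\max X$, and in (2) the domain is all of $X$. For maximality, observe that these are exactly partitions of the two kinds appearing in Theorem~\ref{thm:dual maximal subalg} (with the extra requirement $x\notin\max X$ in case~(1)), so they already correspond to maximal subalgebras of $\upsets(X)$; a fortiori they are maximal among proper bounded nuclear subalgebras.

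Finally, and this is where the (small) main point lies, I would show that every proper bounded nuclear subalgebra is contained in one of kind (1) or (2). Let $\sim$ be a nuclear partial correct partition with domain $D\supseteq\max X$ that is not the total identity, and rerun the argument from the proof of Theorem~\ref{thm:dual maximal nuclear subalg}: put $Z=(X\setminus D)\cup\{d\in D \mid [d]_\sim \text{ is nontrivial}\}$, which is nonempty. If $\max Z\not\subseteq D$, pick $x\in(\max Z)\setminus D$; since $\max X\subseteq D$ we get $x\in X\setminus\max X$, and exactly as in that proof the identity relation on $X\setminus\{x\}$ is nuclear, satisfies $x\notin S$ or $\coverof x\subseteq S$, contains $\sim$ by Lemma~\ref{lemma:inclusion subalg dual}, and has domain $X\setminus\{x\}\supseteq\max X$ — so it is of kind (1). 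If $\max Z\subseteq D$, the same argument produces a total correct partition whose only nontrivial class is some $\{x,y\}$ with $\coverof x=\coverof y$ and $x\in S$ iff $y\in S$; it is nuclear, contains $\sim$, and has domain $X$, hence is of kind (2). Since the poset of bounded nuclear subalgebras of $(\upsets(X),j_S)$ is finite, this shows every maximal bounded nuclear subalgebra is of kind (1) or (2).

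The only thing to watch is the interplay between ``maximal bounded nuclear'' and ``maximal nuclear'': a priori a maximal bounded nuclear subalgebra need not be a maximal nuclear subalgebra, since a nuclear subalgebra lying above it might fail to be bounded. The third step handles this directly by exhibiting, above any proper bounded nuclear subalgebra, a subalgebra of kind (1) or (2), which by Theorem~\ref{thm:dual maximal subalg} is maximal among \emph{all} proper subalgebras, so nothing lies strictly between it and $\upsets(X)$. The subtlety is merely that deleting a point from the domain of a bounded-subalgebra partition is permitted only for non-maximal points, which is precisely the extra clause $x\in X\setminus\max X$ in~(1).
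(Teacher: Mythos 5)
Your proof is correct and follows essentially the same route as the paper: reduce to Theorem~\ref{thm:dual maximal nuclear subalg} and record that boundedness of a subalgebra corresponds, via Proposition~\ref{prop:cofinal domain dual to preserving 0}, to cofinality of the domain of its partition, i.e.\ $\max X\subseteq D$, which in case (1) is exactly the extra clause $x\notin\max X$. The only difference is that the ``subtlety'' for which you rerun the containment argument is actually vacuous: a bounded subalgebra contains $\varnothing$, hence so does every subalgebra above it, so maximal bounded nuclear subalgebras are exactly the maximal nuclear subalgebras that happen to be bounded --- the one-line observation with which the paper dispatches the whole proof.
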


\begin{proof}
Clearly bounded nuclear subalgebras of $\upsets(X)$ correspond to nuclear partial correct partitions $\sim$ of $X$ with cofinal domain, and maximal bounded nuclear subalgebras are exactly the maximal nuclear subalgebras that are bounded. Therefore, the result follows from Theorem~\ref{thm:dual maximal nuclear subalg}.
\end{proof}

We next adjust the definition of irreducible and universal models to the setting of bounded nuclear implicative semilattices.

\begin{definition}
We say that a model $\mathfrak M=(X,S, \colorsof)$ is \textit{irreducible} for $\nisbot$ if the nuclear implicative semilattice $(\upsets(X),j_S)$ is generated by the upsets $U_1, \ldots, U_n$ as a bounded nuclear implicative semilattice.
\end{definition}

\begin{theorem}[Coloring Theorem for $\nisbot$] \label{thm:conditions for bounded irred model}
A finite model $\mathfrak M=(X, S,\colorsof)$ is irreducible for $\nisbot$  iff the following two conditions are satisfied:
\begin{enumerate}
\item $\forall x\in X \setminus \max X \left(\colorsof(x)=\colorsof(\coverof x)\Rightarrow x\in S\ \&\ {\coverof x}\nsubseteq S\right)$,
\item $\forall x,y\in X \left(\coverof x=\coverof y\ \&\ \colorsof(x)=\colorsof(y)\ \&\ (x\in S\Leftrightarrow y\in S)\Rightarrow x=y\right)$.
\end{enumerate}
\end{theorem}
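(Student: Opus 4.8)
The plan is to follow the proof of the Coloring Theorem (Theorem~\ref{thm:conditions for irred model}) almost word for word, replacing maximal nuclear subalgebras by maximal bounded nuclear subalgebras and Theorem~\ref{thm:dual maximal nuclear subalg} by Theorem~\ref{thm:dual maximal bounded nuclear subalg}. By definition, $\mathfrak M=(X,S,\colorsof)$ is irreducible for $\nisbot$ iff $(\upsets(X),j_S)$ is generated by $U_1,\dots,U_n$ as a bounded nuclear implicative semilattice, i.e.\ iff no proper bounded nuclear subalgebra of $(\upsets(X),j_S)$ contains $U_1,\dots,U_n$. Since $X$ is finite there are only finitely many subalgebras of $\upsets(X)$, so every proper bounded nuclear subalgebra is contained in a maximal one; hence $\mathfrak M$ is irreducible for $\nisbot$ iff no maximal bounded nuclear subalgebra of $(\upsets(X),j_S)$ contains $U_1,\dots,U_n$.

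Next I would invoke Theorem~\ref{thm:dual maximal bounded nuclear subalg}, which describes the partial correct partitions $\sim$ of $X$ dual to maximal bounded nuclear subalgebras as being of two kinds: those of the first kind have domain $D=X\setminus\{x\}$ for some $x\in X\setminus\max X$, are the identity on $D$, and satisfy $x\notin S$ or $\coverof x\subseteq S$; those of the second kind have domain $X$, one nontrivial class $\{x,y\}$ with $\coverof x=\coverof y$, and $x\in S$ iff $y\in S$. For each such partition I would then apply Lemma~\ref{lem:U_i in dual of partition}. For a first-kind partition attached to $x$, condition~(1) of that lemma is automatic (as $\sim$ is the identity) and condition~(2) holds iff $\colorsof(x)=\colorsof(\coverof x)$, since the only potential failure of condition~(2) is at the missing point $x\notin D$; so $U_1,\dots,U_n$ lie in the corresponding subalgebra iff $\colorsof(x)=\colorsof(\coverof x)$. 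For a second-kind partition with nontrivial class $\{x,y\}$, condition~(2) of the lemma is automatic (as $D=X$) and condition~(1) holds iff $\colorsof(x)=\colorsof(y)$; so $U_1,\dots,U_n$ lie in the corresponding subalgebra iff $\colorsof(x)=\colorsof(y)$. Therefore: there is no first-kind maximal bounded nuclear subalgebra containing all the $U_i$ iff there is no $x\in X\setminus\max X$ with $\colorsof(x)=\colorsof(\coverof x)$ and with $x\notin S$ or $\coverof x\subseteq S$ --- which is exactly Condition~(1) of the theorem; and there is no second-kind one iff there are no distinct $x,y$ with $\coverof x=\coverof y$, $\colorsof(x)=\colorsof(y)$, and $x\in S\Leftrightarrow y\in S$ --- which is exactly Condition~(2). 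Combining, $\mathfrak M$ is irreducible for $\nisbot$ iff Conditions~(1) and~(2) hold.

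The only feature that differs from Theorem~\ref{thm:conditions for irred model} is that the universal quantifier in Condition~(1) now ranges over $X\setminus\max X$ rather than all of $X$, and this is forced precisely by the restriction $x\in X\setminus\max X$ appearing in the first kind of partition in Theorem~\ref{thm:dual maximal bounded nuclear subalg} --- which in turn reflects that deleting a maximal point from the domain destroys cofinality (Proposition~\ref{prop:cofinal domain dual to preserving 0}), so such a partition no longer corresponds to a bounded subalgebra. I do not expect a real obstacle here: the argument is structurally identical to that of Theorem~\ref{thm:conditions for irred model}, and the only care required is the routine translation, via Lemma~\ref{lem:U_i in dual of partition}, of the saturation and cover conditions into conditions on the coloring.
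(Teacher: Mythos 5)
Your proposal is correct and takes exactly the approach of the paper, whose proof consists of the single remark that the argument is analogous to that of Theorem~\ref{thm:conditions for irred model} but uses Theorem~\ref{thm:dual maximal bounded nuclear subalg} in place of Theorem~\ref{thm:dual maximal nuclear subalg}. Your translation of the two kinds of maximal bounded nuclear subalgebras via Lemma~\ref{lem:U_i in dual of partition}, and your explanation of why the quantifier in Condition~(1) is restricted to $X \setminus \max X$, are all accurate.
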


\begin{proof}
The proof is analogous to the proof of Theorem~\ref{thm:conditions for irred model} but uses the dual characterization of maximal bounded nuclear subalgebras of Theorem~\ref{thm:dual maximal bounded nuclear subalg}.
\end{proof}

\begin{remark}
Recall that $\colorsof(\emptyset)=\{ 1, \ldots, n \}$. Thus, if $x \in \max X$, then $\colorsof (\coverof x)=\{ 1, \ldots, n \}$. While Theorem~\ref{thm:conditions for bounded irred model} does not exclude the existence of elements with color $\{ 1, \ldots, n \}$ in an irreducible model for $\nisbot$, the second condition of Theorem~\ref{thm:conditions for bounded irred model} implies that there can only be at most two such elements in $\max X$ and they cannot be both in $S$. Note that there can be elements with color $\{ 1, \ldots, n \}$ outside of $\max X$ (see Remark~\ref{rem:X^n in bounded universal model is finite}).
\end{remark}

\begin{definition}
Let $\mathfrak L=(X,S,\colorsof)$ be an irreducible model for $\nisbot$. We call $\mathfrak L$ \emph{$n$-universal} for $\nisbot$ provided for every finite irreducible model $\mathfrak M=(Y,T,\colorsof)$ for $\nisbot$ there is a unique embedding of posets $e:Y \to X$ such that $e(Y)$ is an upset of $X$, $e^{-1}(S)=T$, and $\colorsof (e(y))=\colorsof(y)$ for all $y \in Y$.
\end{definition}

The construction of the universal model $\mathfrak L=(X,S,\colorsof)$ for $\nisbot$ is similar to that of the universal model for $\nis$. The only difference is in the construction of the first layer $\mathfrak L_1$ where $\sigma$ is allowed to be the full color $\{ 1, \ldots, n \}$. The other layers are constructed as in Definition~\ref{def:universal model}.

\begin{remark} \label{rem:X^n in bounded universal model is finite}
The first layer of the universal model for $\nisbot$ has two elements with the full color $\{ 1, \ldots, n \}$, one of which is in $S$. In total there are exactly four elements with full color:
\begin{itemize}
\item $y_1=r_{\varnothing,\{ 1, \ldots, n \}}$,
\item $y_2=s_{\varnothing,\{ 1, \ldots, n \}}$,
\item $y_3=s_{\{y_1\},\{ 1, \ldots, n \}}$,
\item $y_4=s_{\{y_1,y_2\},\{ 1, \ldots, n \}}$.
\end{itemize}
Indeed, the elements with full color other than $y_1$ and $y_2$ can only be obtained by applying Rule (3) to the antichains $\{y_1\}$ and $\{y_1,y_2\}$, which yield $y_3$ and $y_4$. Note that $y_3,y_4 \in S$ and $y_3,y_4 \in \dn y_1$, so there is no new antichain to which Rule (3) applies. We have that $y_1,y_2 \in X_1$ and $y_3,y_4 \in X_2$. Thus, $\{ x \in X \mid \colorsof(x)=\{ 1, \ldots, n \} \}$ has height $2$.
\end{remark}

\begin{theorem}\label{thm:bounded universal model and it is finite}
The model $\mathfrak L=(X,S,\colorsof)$ constructed above is universal for $\nisbot$ and finite.
\end{theorem}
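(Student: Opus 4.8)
The plan is to transcribe the proofs of Theorem~\ref{thm:universal model} and Theorem~\ref{thm:universal model is finite}, replacing the Coloring Theorem (Theorem~\ref{thm:conditions for irred model}) by its bounded analogue (Theorem~\ref{thm:conditions for bounded irred model}) and keeping track of the extra full-color points contributed by the first layer.

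\textbf{Universality.} I would argue by induction on the height of a finite model $\mathfrak{M}=(Y,T,\colorsof)$ that is irreducible for $\nisbot$, following the proof of Theorem~\ref{thm:universal model} step by step. The only place that changes is the base case, when the height of $Y$ is $1$: then $Y$ is an antichain, and I set $e(y)=r_{\varnothing,\colorsof(y)}$ if $y\notin T$ and $e(y)=s_{\varnothing,\colorsof(y)}$ if $y\in T$. These points lie in $X$ because the first layer of the bounded universal model contains $r_{\varnothing,\sigma}$ and $s_{\varnothing,\sigma}$ for \emph{every} $\sigma\subseteq\{1,\dots,n\}$, including $\sigma=\{1,\dots,n\}$; moreover this layer contains exactly one $r$-point and one $s$-point of each color, and since $X_1=\max X$, this forces $e$ to be the unique embedding with $e(Y)$ an upset, $e^{-1}(S)=T$, and $\colorsof(e(y))=\colorsof(y)$ for all $y\in Y$ (that the $e$ just defined is injective is exactly condition~(2) of Theorem~\ref{thm:conditions for bounded irred model}). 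For the inductive step, with the height of $Y$ equal to $m+1\ge 2$, I let $Y'$ be the set of elements of height at most $m$ and $T'=T\cap Y'$; since $m\ge 1$ we have $\max Y'=\max Y$, so $\mathfrak{M}'=(Y',T',\colorsof_{|Y'})$ is again irreducible for $\nisbot$, and the inductive hypothesis yields a unique $e':Y'\to X$. Every $y\in Y\setminus Y'$ has height $m+1\ge 2$, hence is non-maximal, so condition~(1) of Theorem~\ref{thm:conditions for bounded irred model} applies to $y$ with exactly the force that condition~(1) of Theorem~\ref{thm:conditions for irred model} has in the unbounded case. Consequently the verification that $r_{e'(\coverof y),\colorsof(y)}$ (if $y\notin T$) or $s_{e'(\coverof y),\colorsof(y)}$ (if $y\in T$) exists in $X$, via Rules (1), (2), (3), is verbatim the one in the proof of Theorem~\ref{thm:universal model}, as is the proof that the resulting extension $e$ is the unique embedding with the required properties.

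\textbf{Finiteness.} I would run the reverse induction from the proof of Theorem~\ref{thm:universal model is finite}, showing that $X^{\ge d}$ is finite for each $0\le d\le n$ and taking $d=0$ at the end. The base case is now $X^{\ge n}=X^n=\{y_1,y_2,y_3,y_4\}$, which is finite by Remark~\ref{rem:X^n in bounded universal model is finite}. For the inductive step nothing changes: when $d<n$, the points of $R^d$, $S^d_<$, and $S^d_=$ are produced by Rules (1), (2), (3), which are unchanged in the bounded construction, so Lemma~\ref{lem:univ model finiteness} applies to $\mathfrak{L}$; hence if $X^{\ge d+1}$ is finite, then $R^d$ and $S^d_<$ are finite, $R^{\ge d}\subseteq R^d\cup X^{\ge d+1}$ is finite, $S^d_=$ has finite height, and therefore $X^{\ge d}=X^{\ge d+1}\cup R^d\cup S^d_=\cup S^d_<$ has finite height, say $m$. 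Since the layers $\mathfrak{L}_0\subseteq\mathfrak{L}_1\subseteq\cdots$ are constructed exactly as in Definition~\ref{def:universal model}, each $\mathfrak{L}_k$ is finite and $X_m$ is the set of elements of $\mathfrak{L}$ of height at most $m$, whence $X^{\ge d}\subseteq X_m$ is finite.

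\textbf{Main obstacle.} There is no essentially new difficulty here: the only point requiring care is that admitting the full color $\{1,\dots,n\}$ in the first layer neither spoils the height-$1$ case of universality nor generates infinitely many full-color points later on. The first is handled by condition~(2) of Theorem~\ref{thm:conditions for bounded irred model}, and the second by the analysis in Remark~\ref{rem:X^n in bounded universal model is finite}, which shows that Rule (3) can manufacture only the four points $y_1,y_2,y_3,y_4$. Everything else is a transcription of the arguments already given for $\nis$.
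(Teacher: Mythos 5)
Your proposal is correct and follows the same route as the paper, which likewise proves universality by rerunning the induction of Theorem~\ref{thm:universal model} with the height-$1$ case adjusted to accommodate the (at most two) maximal full-color points, and proves finiteness by rerunning the reverse induction of Theorem~\ref{thm:universal model is finite} with the base case $X^{\ge n}=X^n$ supplied by Remark~\ref{rem:X^n in bounded universal model is finite}. The paper states this only as a brief sketch; your write-up fills in the same details (in particular, that non-maximal points are unaffected by the weakened condition~(1) of Theorem~\ref{thm:conditions for bounded irred model}, and that Lemma~\ref{lem:univ model finiteness} applies unchanged for $d<n$) correctly.
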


\begin{proof}
The proof of universality for $\nis$ proceeds as the proof of Theorem~\ref{thm:universal model}. The only difference is in the definition of the map $e$ on the elements of height $1$ since irreducible models for $\nisbot$ are allowed to have at most two maximal elements with full color. Those elements are sent either to $s_{\varnothing,\{ 1, \ldots, n \}}$ or $r_{\varnothing,\{ 1, \ldots, n \}}$ depending on whether or not they are in $S$.

We noted in Remark~\ref{rem:X^n in bounded universal model is finite} that
$\{ x \in X \mid \colorsof(x)=\{ 1, \ldots, n \} \}$
is finite. Therefore, the finiteness of $\mathfrak L$ can be proved as in Theorem~\ref{thm:universal model is finite}.
\end{proof}

In addition, a simple modification of the proof of Theorem~\ref{thm:nis generated by finite algebras}, where we let $B$ to be the bounded subalgebra of $(A,\wedge,\to,0)$ generated by $F$, yields the following:

\begin{theorem}\label{thm:nisbot generated by finite algebras}
$\nisbot$ is generated by its finite algebras.
\end{theorem}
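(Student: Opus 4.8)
The plan is to adapt the proof of Theorem~\ref{thm:nis generated by finite algebras} by working with bounded subalgebras throughout. Given a term $t(x_1,\dots,x_n)$ in the language of bounded nuclear implicative semilattices for which $t=1$ is not derivable from the defining equations of $\nisbot$, I would first fix a bounded nuclear implicative semilattice $\mathfrak A=(A,\meet,\to,0,j)$ and elements $a_1,\dots,a_n\in A$ with $t(a_1,\dots,a_n)\ne 1$. As before I would set $F=\{t'(a_1,\dots,a_n)\mid t'\text{ a subterm of }t\}$ and, after adding $0$ to $F$ if necessary, let $B$ be the bounded subalgebra of $(A,\meet,\to,0)$ generated by $F$. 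Since $\isbot$ is locally finite (the bounded form of Diego's Theorem, noted at the start of this section), $B$ is finite; and because $B$ contains $0$, the bottom of $B$ is exactly $0$, so the inclusion $B\to A$ is a bounded implicative semilattice homomorphism.

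Next I would equip $B$ with the nucleus $j_B(b)=\bigwedge\{x\in B\cap A_j\mid b\le x\}$, exactly as in the proof of Theorem~\ref{thm:nis generated by finite algebras}. The verification that $j_B$ is a nucleus goes through verbatim: inflationarity and idempotency are immediate, and meet-preservation follows from the distributivity of the semilattice $A$, with the witnessing elements $x\ge a$, $y\ge b$ satisfying $z=x\meet y$ supplied by \cite[Prop.~2.1]{BJ08} lying in $B$ (since $B$ is closed under $\meet$ and $\to$) and in $A_j$ (since $z\in A_j$ and $A_j$ is a total subalgebra of $A$ by Proposition~\ref{prop:nucleus induced by subalgebra}(1)). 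Thus $(B,j_B)$ is a finite bounded nuclear implicative semilattice, although, as in the unbounded case, possibly not a nuclear subalgebra of $\mathfrak A$.

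Finally, since $B$ is a bounded subalgebra of $(A,\meet,\to,0)$---so that $\meet$, $\to$, $1$, and, crucially, $0$ are computed in $B$ as in $A$---and since $j(b)\in B$ forces $j_B(b)=j(b)$, an induction on subterms shows that each $t'(a_1,\dots,a_n)$ takes the same value in $\mathfrak A$ and in $(B,j_B)$. Hence $t(a_1,\dots,a_n)\ne 1$ in $(B,j_B)$, so $t=1$ is refuted in a finite bounded nuclear implicative semilattice, and therefore $\nisbot$ is generated by its finite algebras. I do not expect any genuine obstacle beyond the unbounded case; the one point that needs attention is to keep $0$ among the generators of $B$, so that $B$ is a bounded subalgebra and the constant $0$ is interpreted consistently when evaluating subterms.
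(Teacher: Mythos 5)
Your proposal is correct and is essentially the paper's own argument: the paper proves this theorem precisely by rerunning the proof of Theorem~\ref{thm:nis generated by finite algebras} with $B$ taken to be the bounded subalgebra of $(A,\meet,\to,0)$ generated by $F$, exactly as you do. Your additional remarks (that the bounded form of Diego's Theorem gives finiteness of $B$, and that $0$ is then interpreted consistently) are the right points to check and they all go through.
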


Finally, by following the proof of Theorem~\ref{thm:nis locally finite} and using Theorems~\ref{thm:bounded universal model and it is finite} and~\ref{thm:nisbot generated by finite algebras}, we arrive at the main result of this section:

\begin{theorem}
$\nisbot$ is locally finite.
\end{theorem}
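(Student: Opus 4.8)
The plan is to follow the proof of Theorem~\ref{thm:nis locally finite} essentially verbatim, replacing each ingredient by its bounded counterpart developed in this section. For each $n$, let $\mathfrak F_n$ be the free $n$-generated bounded nuclear implicative semilattice; as in the unbounded case it suffices to prove that $\mathfrak F_n$ is finite. First I would form the inverse system $\{\mathfrak A_\alpha\}$ of finite homomorphic images of $\mathfrak F_n$. Each $\mathfrak A_\alpha$ is $n$-generated as a bounded nuclear implicative semilattice, and the bonding maps are bounded nuclear homomorphisms sending generators to generators.

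Next I would pass to the dual side. Let $\mathfrak M_\alpha$ be the finite model irreducible for $\nisbot$ corresponding to $\mathfrak A_\alpha$; then $\{\mathfrak M_\alpha\}$ is a direct system whose transition maps are total, one-to-one, coloring-preserving \smor{}s (the analogue of Remark~\ref{remark:finite n-gen and irred models} for $\nisbot$; note that a total \smor{} automatically has cofinal domain, hence is a morphism of $\spfbot$). By Theorem~\ref{thm:bounded universal model and it is finite} the $n$-universal model $\mathfrak L$ for $\nisbot$ is finite, and being finite it is irreducible, hence it is the terminal object of $\{\mathfrak M_\alpha\}$. As in the proof of Theorem~\ref{thm:nis locally finite}, the direct limit of $\{\mathfrak M_\alpha\}$ is therefore isomorphic to $\mathfrak L$, and dually the inverse limit of $\{\mathfrak A_\alpha\}$ is isomorphic to the finite bounded nuclear implicative semilattice $\mathfrak L^*$.

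Finally, by Theorem~\ref{thm:nisbot generated by finite algebras} the variety $\nisbot$ is generated by its finite algebras, so $\mathfrak F_n$ embeds into the inverse limit of its finite homomorphic images, that is, into $\mathfrak L^*$. Since $\mathfrak L^*$ is finite, so is $\mathfrak F_n$, and hence $\nisbot$ is locally finite.

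I do not anticipate a genuine obstacle: all of the substantive work has already been carried out in Theorems~\ref{thm:bounded universal model and it is finite} and~\ref{thm:nisbot generated by finite algebras}, the termination of the universal-model construction being the crux exactly as for $\nis$. The only point that needs a little care is verifying the bounded version of Remark~\ref{remark:finite n-gen and irred models} --- that onto bounded nuclear homomorphisms between finite algebras correspond dually to total, one-to-one, coloring-preserving \smor{}s lying in $\spfbot$ --- but this follows immediately from the bounded duality established in this section together with Proposition~\ref{prop:cofinal domain dual to preserving 0}.
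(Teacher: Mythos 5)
Your proposal is correct and is essentially identical to the paper's proof, which likewise just reruns the argument of Theorem~\ref{thm:nis locally finite} with the bounded universal model (Theorem~\ref{thm:bounded universal model and it is finite}) and the bounded finite model property (Theorem~\ref{thm:nisbot generated by finite algebras}) substituted in. Your remark that a total \smor{} automatically has cofinal domain, so the dual correspondence lands in $\spfbot$, is exactly the small verification needed and is handled correctly.
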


\section{An alternative proof of Diego's Theorem}\label{sec:alternative Diego's Theorem}

We cannot claim that Diego's Theorem is a consequence of Theorem~\ref{thm:nis locally finite} because we use it in the proof of Theorem~\ref{thm:nis generated by finite algebras} which is used in the proof of Theorem~\ref{thm:nis locally finite}. To give an alternative proof of Diego's Theorem using our technique, we need to prove that $\is$ is generated by its finite algebras.
In this section we show how to do this by utilizing the technique of the distributive envelope of a distributive semilattice \cite{BJ08,BJ11,BJ13}.
We also show that the Coloring Theorem and the construction of  universal models simplify dramatically for $\is$.
For yet another proof of Diego's Theorem, using the technique of filtrations, see \cite[Sec.~5.4]{CZ97}.

We can identify $\is$ with the full subcategory of $\nis$ given by the nuclear implicative semilattices in which the nucleus is the identity. Alternatively, we can identify $\is$ with the full subcategory of $\nis$ given by the nuclear implicative semilattices in which the nucleus maps every element to $1$. The finite algebras in these two subcategories of $\nis$ are dual to S-posets $(X,S)$ with $S=X$ or $S=\emptyset$. In either case, the subset $S$ is not giving any additional information and we can drop it from our consideration. Therefore, a model for $\is$ is simply a pair $\mathfrak M=(X, \colorsof)$ where $X$ is a poset and $\colorsof$ is a coloring, and $\mathfrak M$ is irreducible if $\upsets(X)$ is generated as an implicative semilattice by the upsets $U_1,\dots,U_n$ that the coloring gives rise to. Thus, the Coloring Theorem simplifies as follows.

\begin{theorem}[Coloring Theorem for $\is$]
A finite model $\mathfrak M=(X,\colorsof)$ is irreducible for $\is$ iff the following two conditions are satisfied:
\begin{enumerate}
\item $\forall x\in X \  \colorsof(x) \subset \colorsof(\coverof x)$,
\item $\forall x,y\in X \left(\coverof x=\coverof y\ \&\ \colorsof(x)=\colorsof(y) \Rightarrow x=y\right)$.
\end{enumerate}
\end{theorem}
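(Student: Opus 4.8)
The plan is to obtain this theorem as an immediate specialization of the general Coloring Theorem (Theorem~\ref{thm:conditions for irred model}). As explained in the paragraph preceding the statement, we identify a finite implicative semilattice $\upsets(X)$ with the nuclear implicative semilattice $(\upsets(X), j_S)$ for $S = X$ (the identity nucleus); the case $S = \emptyset$ works the same way. The first thing I would check is that this identification does not change the notion of irreducibility: when $S = X$ the nucleus $j_S$ is the identity, and when $S = \emptyset$ it sends every element to $1 \in \upsets(X)$, so in either case every implicative semilattice subalgebra of $\upsets(X)$ is automatically closed under $j_S$. Hence the nuclear subalgebras of $(\upsets(X), j_S)$ are exactly the implicative semilattice subalgebras of $\upsets(X)$, and a finite model $\mathfrak M = (X, \colorsof)$ is irreducible for $\is$ precisely when $(X, S, \colorsof)$ is irreducible for $\nis$ in the sense of Section~\ref{sec:coloring}.

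Next I would record the elementary observation that $\colorsof(x) \subseteq \colorsof(\coverof x)$ for every $x \in X$: if $\coverof x = \emptyset$ this is the convention $\colorsof(\emptyset) = \{1, \ldots, n\}$, and if $\coverof x \neq \emptyset$ then $\colorsof(\coverof x) = \bigcap_{y \in \coverof x} \colorsof(y)$ while $x < y$ forces $\colorsof(x) \subseteq \colorsof(y)$ for each such $y$. Consequently $\colorsof(x) = \colorsof(\coverof x)$ holds if and only if $\colorsof(x) \subset \colorsof(\coverof x)$ fails.

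It then remains to specialize the two conditions of Theorem~\ref{thm:conditions for irred model} to $S = X$, the case $S = \emptyset$ being symmetric. In Condition~(1), the consequent ``$x \in S$ and $\coverof x \nsubseteq S$'' becomes ``$x \in X$ and $\coverof x \nsubseteq X$'', whose second conjunct is always false; so Condition~(1) reduces to the requirement that $\colorsof(x) \neq \colorsof(\coverof x)$ for all $x$, which by the previous paragraph is exactly ``$\colorsof(x) \subset \colorsof(\coverof x)$ for all $x$''. In Condition~(2), the hypothesis ``$x \in S \Leftrightarrow y \in S$'' is automatically satisfied since both sides are true, so Condition~(2) reduces to ``$\coverof x = \coverof y$ and $\colorsof(x) = \colorsof(y)$ imply $x = y$''. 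These are precisely conditions~(1) and~(2) in the statement, completing the proof. There is no genuine difficulty in this argument; the only points needing a moment's care are the remark that irreducibility is unaffected by the identification and the boundary case $\coverof x = \emptyset$ in the monotonicity observation.
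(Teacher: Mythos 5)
Your proposal is correct and follows exactly the route the paper intends: the paper states this theorem as a direct specialization of Theorem~\ref{thm:conditions for irred model} after identifying $\is$ with the subcategory of $\nis$ where $S=X$ (or $S=\emptyset$), and you have simply made explicit the two points the paper leaves tacit (that nuclear subalgebras coincide with subalgebras under this identification, and that $\colorsof(x)\subseteq\colorsof(\coverof x)$ always holds so that inequality is equivalent to proper inclusion). No gaps.
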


The construction of the $n$-universal model $\mathfrak L$ for $\is$ also simplifies considerably since we only need to consider the elements $r_{\alpha, \sigma}$,
so only Rule (1) applies. Therefore, if $x <y$, then $\colorsof(x) \subset \colorsof(y)$. Thus,
the cardinality of the colors strictly decreases layer by layer. It is then clear that the height of $\mathfrak L$ is at most $n$. Thus, $\mathfrak L$ is finite because the construction ends at the $n$-th layer. In fact, the height of $\mathfrak L$ is exactly $n$ since we can construct a chain $x_n < \cdots < x_1$ where $x_1=r_{\emptyset, \{1, \ldots,n-1 \}}$, and $x_{k}=r_{\{x_{k-1}\}, \{1, \ldots,n-k \} }$ for all $k=2, \ldots, n$.

We next show that $\is$ is generated by its finite algebras. For this we require the notion of distributive envelope (see, e.g., \cite[Sec.~3]{BJ11}). Let $A$ be an implicative semilattice. We recall that a filter of $A$ is \textit{prime} if it is a meet-prime element in the lattice of all filters of $A$. Note that if $A$ is a lattice, then this notion coincides with the usual notion of a prime filter. Let $Y_A$ be the poset of prime filters of $A$ ordered by inclusion. It follows from the Prime Filter Lemma that the Stone map $\sigma:A \to \upsets(Y_A)$ given by $\sigma(a)=\{ y \mid a \in y \}$ is a meet-semilattice embedding.

\begin{definition}
The \textit{distributive envelope} $D(A)$ of $A$ is the sublattice of $\upsets(Y_A)$ generated by $\sigma(A)$.
\end{definition}

For various characterizations of $D(A)$ see~\cite[Sec.~3]{BJ11}.

\begin{theorem} \label{thm:is generated by finite algebras}
$\is$ is generated by its finite algebras.
\end{theorem}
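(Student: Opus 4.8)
The plan is to establish the equivalent statement that any equation $t(x_1,\dots,x_n)=1$ in the signature $(\meet,\to,1)$ that is not derivable from the axioms of $\is$ already fails in some \emph{finite} implicative semilattice, using nothing about local finiteness. So I fix an implicative semilattice $\mathfrak A$ with $a_1,\dots,a_n\in A$ such that $t(a_1,\dots,a_n)\ne 1$, let $\Sigma$ be the finite set of subterms of $t$, and set $F=\{1\}\cup\{\,s(a_1,\dots,a_n)\mid s\in\Sigma\,\}$. This is a finite subset of $A$ which is ``locally closed'' under the operations: whenever $s_1\meet s_2$ or $s_1\to s_2$ occurs in $t$, then $s_1(\vec a)$, $s_2(\vec a)$, and $s_1(\vec a)\meet s_2(\vec a)$, resp.\ $s_1(\vec a)\to s_2(\vec a)$, all lie in $F$. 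The obstacle is that the $(\meet,\to)$-subalgebra of $A$ generated by $F$ need not be finite without Diego's Theorem, so instead I work inside the distributive envelope.

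First I would show that the Stone map $\sigma\colon A\to\upsets(Y_A)$, $\sigma(a)=\{y\in Y_A\mid a\in y\}$, is an implicative semilattice embedding into the Heyting algebra $\upsets(Y_A)$. It is a meet-semilattice embedding by the Prime Filter Lemma, and $\sigma(1)=Y_A$. For implication, $\sigma(b\to c)\subseteq\sigma(b)\to\sigma(c)$ is immediate from $(b\to c)\meet b\le c$; for the reverse inclusion, if $b\to c\notin y$ then $c$ lies outside the filter $\{d\mid\exists e\in y:e\meet b\le d\}$ generated by $y\cup\{b\}$ (otherwise $e\meet b\le c$, hence $e\le b\to c\in y$, for some $e\in y$), so by the Prime Filter Lemma there is a prime filter $y'\supseteq y$ with $b\in y'$ and $c\notin y'$, which witnesses $y\notin\sigma(b)\to\sigma(c)$. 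Hence $\sigma(b\to c)$ equals the Heyting implication $\sigma(b)\to\sigma(c)$ of $\upsets(Y_A)$; equivalently, the implications coming from $A$ survive inside the distributive envelope $D(A)$.

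Next I would let $L$ be the distributive lattice generated by the finite set $\sigma(F)$ inside $\upsets(Y_A)$. Since a finitely generated distributive lattice is finite, $L$ is a finite distributive lattice and hence a finite implicative semilattice. Meets in $L$ agree with meets in $A$ transported by $\sigma$, and the key point is that $L$ computes the implications occurring in $t$ correctly: if $b,c\in F$ with $b\to c\in F$, then $\sigma(b\to c)\in\sigma(F)\subseteq L$ and, since $\sigma(b\to c)$ is already the relative pseudocomplement of $\sigma(b)$ with respect to $\sigma(c)$ in the ambient Heyting algebra, it is a fortiori their relative pseudocomplement in the sublattice $L$; thus $\sigma(b)\to_L\sigma(c)=\sigma(b\to c)$. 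A routine induction on subterms then gives that each $s\in\Sigma$ evaluates in $L$, under the assignment $x_i\mapsto\sigma(a_i)$, to $\sigma(s(\vec a))$; in particular $t$ evaluates to $\sigma(t(\vec a))$, which is distinct from $\sigma(1)=1_L$ because $\sigma$ is injective. So $t=1$ is refuted in the finite implicative semilattice $L$, completing the proof.

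The only non-routine ingredient is the identity $\sigma(b)\to_L\sigma(c)=\sigma(b\to c)$, i.e.\ that passing from $A$ to the finite lattice $L$ does not disturb the finitely many implications appearing in $t$; this is precisely what the distributive-envelope viewpoint supplies, and it is what lets us sidestep the (possibly infinite) subalgebra generated by $F$. Everything else is bookkeeping. With this theorem available, Diego's Theorem itself follows by rerunning the coloring construction of Section~\ref{sec:coloring} in the degenerate case $S=X$ (equivalently $S=\varnothing$), where only Rule~(1) fires, so colors shrink strictly up the order and the $n$-universal model has height at most $n$ and is therefore finite.
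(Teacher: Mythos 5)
Your proof is correct and follows essentially the same route as the paper: embed $A$ via the Stone map, take the finite sublattice generated by the (Stone images of the) values of the subterms of $t$, invoke local finiteness of distributive lattices, and check that the finitely many implications occurring in $t$ are computed the same way in that sublattice. The only cosmetic difference is that you work inside the full Heyting algebra $\upsets(Y_A)$ rather than the distributive envelope $D(A)$ and supply a direct Prime-Filter-Lemma argument that $\sigma(b\to c)$ is the relative pseudocomplement of $\sigma(b)$ with respect to $\sigma(c)$ there, whereas the paper obtains the corresponding fact for $D(A)$ by citation; both then use the same ``a fortiori in the sublattice'' step.
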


\begin{proof}
Let $t(x_1, \ldots,x_n)$ be a term in the language of implicative semilattices such that the equation $t(x_1, \ldots, x_n) =1$ is not derivable from the equations defining $\is$. Then there is an implicative semilattice $A$ and $a_1, \ldots,a_n \in A$ such that $t(a_1, \ldots, a_n) \neq 1$ in $A$. Set
\[
F=\{ t'(a_1, \ldots,a_n) \mid \mbox{$t'$ is a subterm of $t$} \}.
\]
Since $A$ is embedded in $D(A)$, we identify $A$ with a subset of $D(A)$. Let $B$ be the sublattice of $D(A)$ generated by $F$. Because $D(A)$ is a distributive lattice, it is locally finite. Thus, $B$ is a finite distributive lattice, and hence an implicative semilattice. Therefore, for each $a, b \in B$ the relative pseudocomplent $a\to_B b$ exists in $B$. Suppose that the relative pseudocomplement $a\to_{D(A)}b$ exists in $D(A)$. We then have
\[
a \to_B b= \bigvee \{ x \in B \mid a \meet x \le b \} =\bigvee \{ x \in B \mid x \le a \to_{D(A)} b \}.
\]
Therefore, $a \to_B b \le a \to_{D(A)} b$. Moreover, if $a \to_{D(A)} b \in B$, then ${a \to_B b} = {a \to_{D(A)} b}$.
By \cite[Lem.~3.3]{BJ13}, if $a,b \in A$, then $a \to_{D(A)} b$ exists and it coincides with the relative pseudocomplement $a \to_A b$ in $A$. Thus, if $a,b \in A \cap B$ and $a \to_A b \in B$, then
$a \to_B b=a \to_{D(A)} b = a \to_A b$.
Therefore, for each subterm $t'$ of $t$, the computation of $t'(a_1, \ldots, a_n)$ in $A$ is the same as that in $B$. Thus, $t(a_1, \ldots, a_n) \neq 1$ in $A$ implies that $t(a_1, \ldots, a_n) \neq 1$ in $B$. Consequently, $t(x_1, \ldots, x_n) =1$ is refuted in the finite implicative semilattice $B$.
\end{proof}

Since $\is$ is generated by its finite algebras and $n$-universal models for $\is$ are finite, the same argument as in the proof of Theorem~\ref{thm:nis locally finite} yields Diego's Theorem:

\begin{theorem}
$\is$ is locally finite.
\end{theorem}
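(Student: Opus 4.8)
The plan is to follow verbatim the strategy of Theorem~\ref{thm:nis locally finite}, substituting Theorem~\ref{thm:is generated by finite algebras} for Theorem~\ref{thm:nis generated by finite algebras} and exploiting the observation made above that the $n$-universal model $\mathfrak L$ for $\is$ has height at most $n$ and is therefore finite.

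First I would fix $n$ and let $\mathfrak F_n$ be the free $n$-generated implicative semilattice; it suffices to prove that $\mathfrak F_n$ is finite. Consider the inverse system $\{\mathfrak A_\alpha\}$ of finite homomorphic images of $\mathfrak F_n$, with the induced homomorphisms as bonding maps; each $\mathfrak A_\alpha$ is $n$-generated and every bonding map sends generators to generators. Dualizing via K\"ohler duality — in the simplified form for $\is$, where an object is just a finite poset with a coloring and the subset $S$ plays no role — converts this into a direct system $\{\mathfrak M_\alpha\}$ of finite irreducible models for $\is$ whose transition maps are \kmor{}s preserving the coloring; here one uses the evident $\is$-analogue of Remark~\ref{remark:finite n-gen and irred models}.

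Next I would bring in the $n$-universal model $\mathfrak L$ for $\is$. As noted above, only Rule~(1) is active in its construction, so colors shrink strictly along the order and the recursion halts at layer $n$; hence $\mathfrak L$ is finite, and being finite it is irreducible by the $\is$-version of the Coloring Theorem. Its universality then makes $\mathfrak L$ a terminal object of the direct system $\{\mathfrak M_\alpha\}$, so by \cite[Ex.~11.4.5]{AHS06} the direct limit of $\{\mathfrak M_\alpha\}$ is isomorphic to $\mathfrak L$. Applying $(\;)^*$, the inverse limit of $\{\mathfrak A_\alpha\}$ is isomorphic to $\mathfrak L^*$, which is finite.

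Finally, by Theorem~\ref{thm:is generated by finite algebras}, $\is$ is generated by its finite algebras, so $\mathfrak F_n$ embeds into the inverse limit of $\{\mathfrak A_\alpha\}$ (see, e.g., \cite[Prop.~2.1]{BM09}); since that inverse limit is $\mathfrak L^*$ and hence finite, $\mathfrak F_n$ is finite. As $n$ was arbitrary, $\is$ is locally finite. The only work beyond citing Theorem~\ref{thm:is generated by finite algebras} is the categorical bookkeeping — recognizing the universal model as the terminal object of the direct system and transporting limits across the duality — so the real obstacle has already been dealt with in Theorem~\ref{thm:is generated by finite algebras}, where the distributive-envelope argument was needed to avoid circularity with Diego's Theorem.
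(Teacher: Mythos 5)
Your proposal is correct and follows exactly the route the paper takes: it reruns the argument of Theorem~\ref{thm:nis locally finite}, replacing Theorem~\ref{thm:nis generated by finite algebras} with Theorem~\ref{thm:is generated by finite algebras} and using the finiteness of the $n$-universal model for $\is$ (immediate since only Rule~(1) applies, so colors strictly shrink and the construction halts by layer $n$). Nothing further is needed.
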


\begin{remark}
An analogous strategy can be employed to prove that the variety of bounded implicative semilattices is locally finite.
The characterization of irreducible models and the definition of the universal model have to be adjusted to allow elements with full color in the first layer. But the construction of the universal model terminates for the same reason as in the case of implicative semilattices.
\end{remark}

\section{Examples}\label{sec:examples}

In this final section we describe some $n$-universal models for $\nis$ and $\nisbot$ for small $n$. If $n=0$, then the only available color is the empty one. Therefore, the full and empty colors coincide, which yields that the $0$-universal model for $\nis$ is empty, and we arrive at the following theorem.

\begin{theorem}
The free $0$-generated nuclear implicative semilattice is the trivial algebra.
\end{theorem}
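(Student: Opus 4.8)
The plan is to derive the statement from the universal-model machinery of Section~\ref{sec:coloring}. Recall from the remark following Theorem~\ref{thm:nis locally finite} that the free $n$-generated nuclear implicative semilattice $\mathfrak{F}_n$ is isomorphic to $\mathfrak{L}^*$, where $\mathfrak{L}=(X,S,\colorsof)$ is the $n$-universal model. So it suffices to show that for $n=0$ the $0$-universal model $\mathfrak{L}$ is empty; then $\mathfrak{F}_0\cong\mathfrak{L}^*=(\upsets(\varnothing),j_\varnothing)$, and $\upsets(\varnothing)$ consists of the single upset $\varnothing$ (which is at once the top and the bottom), hence is the trivial algebra.

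To see that $\mathfrak{L}$ is empty, first recall our convention that when $n=0$ we have $\{1,\ldots,n\}=\varnothing$, so $\varnothing$ is the only available color and it coincides with the full color; equivalently, every element of every model has full color. By Remark~\ref{rem:full color not allowed}(1), no element of a finite irreducible model can have full color, so the only finite irreducible model for $n=0$ is the empty one. Since the $0$-universal model is finite (Theorem~\ref{thm:universal model is finite}) and irreducible (Remark~\ref{rem:univ model is irreducible}), it must be empty. Alternatively, one can read this off Definition~\ref{def:universal model} directly: the base case forms the symbols $r_{\varnothing,\sigma},s_{\varnothing,\sigma}$ only for $\sigma\subset\{1,\ldots,n\}=\varnothing$, of which there are none, so $\mathfrak{L}_1=\mathfrak{L}_0=(\varnothing,\varnothing,\varnothing)$; the recursive step then never produces a new element, and $\mathfrak{L}=(\varnothing,\varnothing,\varnothing)$. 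Either way, $\mathfrak{F}_0\cong\mathfrak{L}^*$ is the one-element algebra.

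I do not expect a genuine obstacle: the only subtle point is the degenerate convention that for $n=0$ the empty color is the full color, which is precisely what forces the universal model, and hence the free algebra, to collapse. As a sanity check, one can also argue directly that in any nuclear implicative semilattice the subalgebra generated by $\varnothing$ is $\{1\}$, since $1\land 1=1$, $1\to 1=1$, and $j(1)=1$ (the last because $1\le j(1)\le 1$); and $\{1\}$ is clearly the initial object of $\nis$, so it is $\mathfrak{F}_0$.
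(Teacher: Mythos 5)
Your proposal is correct and follows essentially the same route as the paper: for $n=0$ the empty color is the full color, so by the no-full-color constraint (equivalently, by reading off Definition~\ref{def:universal model} directly) the $0$-universal model is empty, whence $\mathfrak{F}_0\cong\mathfrak{L}^*=\upsets(\varnothing)$ is trivial. Your closing sanity check (the subalgebra generated by $\varnothing$ is $\{1\}$, the initial object) is a valid and even more elementary argument, though the paper does not take that route.
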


If $n=1$, then there are two colors: $\emptyset$ and $\{ 1 \}$, with $\{ 1 \}$ being the full color. Since there can be no element in the $1$-universal model with the full color, the first layer has two elements: $r_{\emptyset, \emptyset}$ and $s_{\emptyset, \emptyset}$. Rules (1) and (2) of Definition~\ref{def:universal model} allow us to add elements to the next layer only if their color is strictly smaller than the color of their cover. All the points in the first layer have empty color, so these rules do not apply. Rule (3) gives an element in $S$ with empty color for each antichain not contained in $S$. There are two such antichains: $\{r_{\emptyset, \emptyset}\}$ and $\{r_{\emptyset, \emptyset},s_{\emptyset, \emptyset}\}$. Therefore, the second layer of the $1$-universal model is made of the two elements $s_{\{r_{\emptyset, \emptyset}\}, \emptyset}$ and $s_{\{r_{\emptyset, \emptyset},s_{\emptyset, \emptyset}\}, \emptyset}$. The third layer is empty because Rules (1) and (2) do not apply since every element has empty color, and Rule (3) does not apply as every antichain that is not contained in $S$ is contained entirely in the first layer. Thus, the $1$-universal model has $4$ elements and its diagram is shown below.

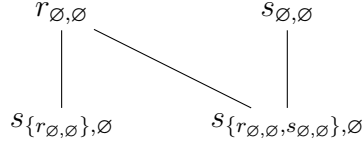
\begin{figure}[H]
\begin{tikzpicture}[inner sep = 2pt]
\node (r00) at (-3,0) {$r_{\emptyset, \emptyset}$};
\node (s00) at (0,0) {$s_{\emptyset, \emptyset}$};
\node (s10) at (-3,-1.5) {$s_{\{r_{\emptyset, \emptyset}\}, \emptyset}$};
\node (s11) at (0,-1.5) {$s_{\{r_{\emptyset, \emptyset},s_{\emptyset, \emptyset}\}, \emptyset}$};
\draw (s10) -- (r00);
\draw (s11) -- (r00);
\draw (s11) -- (s00);
\end{tikzpicture}
\caption{The $1$-universal model for $\nis$.}\label{fig:1-univ model for nis}
\end{figure}

The free $1$-generated nuclear implicative semilattice is isomorphic to the upsets of the $1$-universal model. Since $U_1=\{x \mid 1\in\colorsof(x)\}=\varnothing$, it is generated by the bottom element.
Recalling that
$j_S(U)=X \setminus \dn (S \setminus U)$
for each upset $U$, it is easy to see that we have
\[
\begin{array}{lll}
j_S(\emptyset) = \{r_{\emptyset, \emptyset}\} &
\qquad \neg j_S(\emptyset) = \{s_{\emptyset, \emptyset}\} &
\qquad \neg \neg j_S(\emptyset) = \{r_{\emptyset, \emptyset},s_{\{r_{\emptyset, \emptyset}\}, \emptyset}\} \\[2ex]
j_S \neg j_S(\emptyset) = \{r_{\emptyset, \emptyset},s_{\emptyset, \emptyset}\} &
\multicolumn{2}{l}{ \qquad \neg \neg j_S(\emptyset) \to j_S(\emptyset) = \{r_{\emptyset, \emptyset},s_{\emptyset, \emptyset}, s_{\{r_{\emptyset, \emptyset},s_{\emptyset, \emptyset}\}, \emptyset} \} } \\[2ex]
\multicolumn{3}{l}{ (\neg\neg j_S(\emptyset)\to j_S(\emptyset))\to j_S\neg j_S(\emptyset) = \{r_{\emptyset, \emptyset},s_{\emptyset, \emptyset},s_{\{r_{\emptyset, \emptyset}\}, \emptyset} \} }
\end{array}
\]
Therefore, letting
$g=\varnothing$
and abbreviating $a\to g$ by $\neg_g a$, we arrive at the following theorem.

\begin{theorem}
The free $1$-generated nuclear implicative semilattice is the algebra shown below.
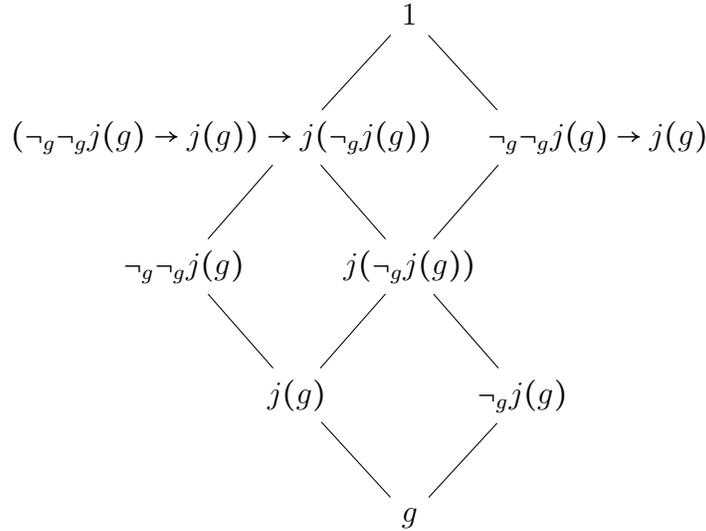
\begin{figure}[H]
\begin{tikzpicture}
\matrix[matrix of math nodes,column sep={1.5cm,between origins},row sep=1cm]
{
&&|(1)|1&\\
&|(r00s00s10)|\hspace{-2cm} ( \neg_g\neg_g j(g) \to j(g))\to j(\neg_g j(g)) &&|(r00s00s11)|\hspace{2cm} \neg_g\neg_g j(g) \to j(g) &\\
|(r00s10)| \neg_g\neg_g j(g) &&|(r00s00)| j(\neg_g j(g)) &\\
&|(r00)| j(g) &&|(s00)| \neg_g j(g) \\
&&|(empty)|g &\\
};
\draw (empty) -- (r00) -- (r00s00) -- (s00) -- (empty);
\draw (r00s00) -- (r00s00s10) -- (r00s10) -- (r00);
\draw (r00s00) -- (r00s00s11) -- (1) -- (r00s00s10);
\end{tikzpicture}
\caption{The free nuclear implicative semilattice on one generator $g$.}\label{fig:1-free alg for nis}
\end{figure}
\end{theorem}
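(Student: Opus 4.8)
The plan is to read the free algebra off the dual of the $1$-universal model. By the Remark following Theorem~\ref{thm:nis locally finite}, the free $1$-generated nuclear implicative semilattice $\mathfrak F_1$ is isomorphic to $\mathfrak L^*=(\upsets(X),j_S)$, where $\mathfrak L=(X,S,\colorsof)$ is the $1$-universal model depicted in Figure~\ref{fig:1-univ model for nis}, and the isomorphism sends the free generator $g$ to the upset $U_1=\{x\in X\mid 1\in\colorsof(x)\}$. Since every point of $\mathfrak L$ has color $\varnothing$, we have $U_1=\varnothing$, so the free generator is simply the bottom element of $\upsets(X)$. It therefore remains to compute $\upsets(X)$ together with the nucleus $j_S$ and to verify that the subalgebra generated from $\varnothing$ exhausts $\upsets(X)$ and is ordered as in Figure~\ref{fig:1-free alg for nis}.

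First I would enumerate the upsets of the four-element poset $X$. Writing $r=r_{\varnothing,\varnothing}$ for the unique non-$S$ point, $r'=s_{\varnothing,\varnothing}$ for the other maximal point, $c=s_{\{r\},\varnothing}$ for the minimal point below $r$ only, and $d=s_{\{r,r'\},\varnothing}$ for the minimal point below both maximal points, one checks that the only upsets are $\varnothing$, $\{r\}$, $\{r'\}$, $\{r,r'\}$, $\{r,c\}$, $\{r,r',c\}$, $\{r,r',d\}$, and $X$ — eight in all, matching the size of the target algebra. Here meet and the Heyting implication on $\upsets(X)$ are intersection and $U\to V=X\setminus\dn(U\setminus V)$, and $j_S(U)=X\setminus\dn(S\setminus U)$ with $S=\{r',c,d\}$.

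Next I would run, from $g=\varnothing$, the chain of computations already recorded immediately before the theorem: $j_S(\varnothing)=\{r\}$; $\neg_g j_S(\varnothing)=j_S(\varnothing)\to\varnothing=\{r'\}$; $\neg_g\neg_g j_S(\varnothing)=\{r,c\}$; $j_S\neg_g j_S(\varnothing)=\{r,r'\}$; $\neg_g\neg_g j_S(\varnothing)\to j_S(\varnothing)=\{r,r',d\}$; and $(\neg_g\neg_g j_S(\varnothing)\to j_S(\varnothing))\to j_S\neg_g j_S(\varnothing)=\{r,r',c\}$. Together with $g=\varnothing$ and $1=X$, these are eight pairwise distinct upsets, so they exhaust $\upsets(X)$; hence $g$ generates $(\upsets(X),j_S)$. (Equivalently, irreducibility of $\mathfrak L$ is immediate from the Coloring Theorem, Theorem~\ref{thm:conditions for irred model}.) It then only remains to observe that the inclusion order among these eight upsets is precisely the Hasse diagram of Figure~\ref{fig:1-free alg for nis}, completing the identification of $\mathfrak F_1$ with the displayed algebra.

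The only real work is the finite bookkeeping of the second and third steps: evaluating each occurrence of $j_S$ and of $\to$ on the relevant upsets and confirming that the resulting eight elements are distinct and ordered as shown. There is no conceptual obstacle — the substantive content has already been supplied by Theorem~\ref{thm:nis locally finite} (specifically the isomorphism $\mathfrak F_n\cong\mathfrak L^*$) and by the explicit construction of the $1$-universal model.
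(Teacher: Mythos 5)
Your proposal is correct and follows essentially the same route as the paper: identify $\mathfrak F_1$ with $\mathfrak L^*$ via the remark after Theorem~\ref{thm:nis locally finite}, observe that the generator is $U_1=\varnothing$, and carry out the finite computation of $j_S$ and $\to$ on the eight upsets of the four-element $1$-universal model to see that they are exactly the displayed elements in the displayed order. The upset enumeration and all six computed values match the paper's, so nothing is missing.
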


The $2$-universal model for $\nis$ is already quite large. There are four colors: $\emptyset$, $\{1\}$, $\{2\}$, and $\{1,2\}$, with $\{1,2\}$ being the full color. For each non-full color, the first layer contains one element from $S$ and another outside of $S$. Thus, the first layer has $6$ elements. There are exactly $6$ antichains in the first layer that have nonempty color
(recall (\ref{color of a subset})).
For each such antichain $\alpha$, Rule (1) tells us to add the element $r_{\alpha, \emptyset}$ to the second layer. The first layer together with the elements of the second layer obtained by applying Rule (1) is shown below.
To make figures easier to follow, from now on elements of universal models will be denoted with only one subscript describing their color. Their cover will be clear from the figure.

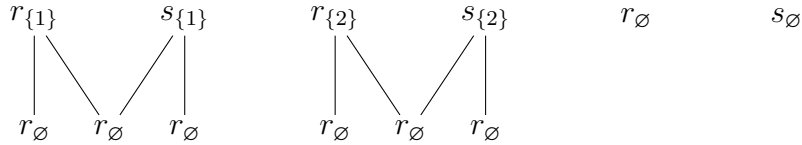
\begin{figure}[H]
\begin{tikzpicture}[inner sep = 2pt]
\node (r00) at (-6,0) {$r_{\{1\}}$};
\node (s00) at (-4,0) {$s_{\{1\}}$};
\node (r01) at (-2,0) {$r_{\{2\}}$};
\node (s01) at (0,0) {$s_{\{2\}}$};
\node (r02) at (2,0) {$r_\varnothing$};
\node (s02) at (4,0) {$s_\varnothing$};
\node (r10) at (-6,-1.5) {$r_\varnothing$};
\node (r11) at (-4,-1.5) {$r_\varnothing$};
\node (r12) at (-2,-1.5) {$r_\varnothing$};
\node (r13) at (0,-1.5) {$r_\varnothing$};
\node (r14) at (-5,-1.5) {$r_\varnothing$};
\node (r15) at (-1,-1.5) {$r_\varnothing$};
\draw (r10) -- (r00);
\draw (r11) -- (s00);
\draw (r12) -- (r01);
\draw (r13) -- (s01);
\draw (r00) -- (r14) -- (s00);
\draw (r01) -- (r15) -- (s01);
\end{tikzpicture}
\caption{The first layer and part of the second layer of the $2$-universal model for $\nis$.}\label{fig:2-univ model for nis}
\end{figure}

Similarly, Rule (2) forces us to add $6$ elements of the form $s_{\alpha, \emptyset}$ to the second layer. To apply Rule (3), we need to consider antichains contained in the first layer that are not entirely contained in $S$. There are many such antichains, each yielding an element of the form $s_{\alpha, \colorsof(\alpha)}$ added to the second layer. Thus, the second layer is rather large to draw easily. The construction of the $2$-universal model does not stop at the second layer because while Rules (1) and (2) do not apply anymore, Rule (3) still applies.
While we leave the details out, it can be estimated that the height of the $2$-universal model is 17.

From the above description of the $n$-universal models for $\nis$ where $n=0,1,2$, we can easily obtain a description of the corresponding universal models for $\is$. All we need to do is to take out the points that are in $S$ and the points that are obtained from  antichains containing some elements of $S$. In other words, we erase the downset of $S$ from the universal model for $\nis$.

Consequently, the $0$-universal model for $\is$ is empty. The $1$-universal model has one element $r_{\emptyset, \emptyset}$ and is obtained by erasing
$S$ in Figure~\ref{fig:1-univ model for nis}.
Thus, the free $0$-generated implicative semilattice is trivial, while the free $1$-generated implicative semilattice is a $2$-element chain, with the generator $g$ being the bottom element.

Erasing the downset of $S$ from the $2$-universal model for $\nis$ gives the $2$-universal model for $\is$ shown in Figure~\ref{fig:2-univ model for is}.

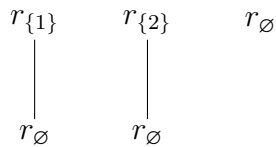
\begin{figure}[H]
\begin{tikzpicture}[inner sep = 2pt]
\node (r00) at (-1.5,0) {$r_{\{1\}}$};
\node (r01) at (0,0) {$r_{\{2\}}$};
\node (r02) at (1.5,0) {$r_\varnothing$};
\node (r10) at (-1.5,-1.5) {$r_\varnothing$};
\node (r11) at (0,-1.5) {$r_\varnothing$};
\draw (r10) -- (r00);
\draw (r11) -- (r01);
\end{tikzpicture}
\caption{The $2$-universal model for $\is$.}\label{fig:2-univ model for is}
\end{figure}

The $3$-universal model for $\is$ is a lot more complicated, and the $4$-universal model for $\is$ is practically impossible to describe (see~\cite[Sec.~4]{Koh81} for details).

We finish the paper by describing how things change in the bounded case. The key difference is that in the bounded case the full color becomes available. Thus, the $0$-generated universal model is no longer empty.
In fact, it has 4 elements. Indeed, since the full color $\emptyset$ is now allowed, the diagram of the $0$-universal model for $\nisbot$ is the same as the diagram of the $1$-universal model for $\nis$ given in Figure~\ref{fig:1-univ model for nis}. Therefore, we arrive at the following theorem.

\begin{theorem}
The free $0$-generated bounded nuclear implicative semilattice is isomorphic to the free $1$-generated nuclear implicative semilattice shown in Figure~\ref{fig:1-free alg for nis}.
\end{theorem}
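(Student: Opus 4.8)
The plan is to prove the theorem by observing that the two universal models in play literally coincide, and then transporting this along the duality together with the free-algebra/universal-model correspondence. Write $\mathfrak{L}=(X,S,\colorsof)$ for the $0$-universal model for $\nisbot$ and $\mathfrak{L}'=(X',S',\colorsof')$ for the $1$-universal model for $\nis$.

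First I would unwind the two first layers. By the description preceding Theorem~\ref{thm:bounded universal model and it is finite}, the model $\mathfrak{L}$ is built by the recursion of Definition~\ref{def:universal model}, the only change being that in $\mathfrak{L}_1$ the parameter $\sigma$ ranges over \emph{all} subsets of $\{1,\dots,n\}$, including the full color. For $n=0$ we have $\{1,\dots,n\}=\varnothing$, so the full color is $\varnothing$, and $\mathfrak{L}_1$ consists of exactly the two formal symbols $r_{\varnothing,\varnothing}$ and $s_{\varnothing,\varnothing}$ with $s_{\varnothing,\varnothing}\in S_1$, trivial order, and constant coloring $\varnothing$. On the other side, by Definition~\ref{def:universal model} the first layer $\mathfrak{L}'_1$ consists of $r_{\varnothing,\sigma},s_{\varnothing,\sigma}$ for $\sigma\subset\{1\}$, i.e.\ only for $\sigma=\varnothing$; hence $\mathfrak{L}'_1$ also consists of exactly $r_{\varnothing,\varnothing}$ and $s_{\varnothing,\varnothing}$ with $s_{\varnothing,\varnothing}\in S'_1$, trivial order, constant coloring $\varnothing$. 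Thus $\mathfrak{L}_1=\mathfrak{L}'_1$ as colored S-posets (the colorings take values in $\powerset(\varnothing)\subseteq\powerset(\{1\})$, but are constantly $\varnothing$ in both).

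Next I would run the induction. For $k\ge 1$ the recursive step of Definition~\ref{def:universal model} is literally the same for $\nis$ and for $\nisbot$ and depends only on $\mathfrak{L}_k$ (its antichains, its coloring, and $S_k$). Since all elements of $\mathfrak{L}_1$ and $\mathfrak{L}'_1$ have color $\varnothing$, Rules (1) and (2) never fire and only Rule (3) adds elements, again of color $\varnothing$; so an easy induction on $k$ shows $\mathfrak{L}_k=\mathfrak{L}'_k$ for all $k$, and therefore $\mathfrak{L}=\mathfrak{L}'$. Applying the contravariant functor $(\;)^*$ gives $\mathfrak{L}^*=\mathfrak{L}'^*$ as (bounded) nuclear implicative semilattices. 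By the remark following Theorem~\ref{thm:nis locally finite}, $\mathfrak{L}'^*$ is isomorphic to the free $1$-generated nuclear implicative semilattice $\mathfrak{F}_1$, which by the theorem just before the present one is the algebra of Figure~\ref{fig:1-free alg for nis}. Running the proof of Theorem~\ref{thm:nis locally finite} verbatim with $\nisbot$ in place of $\nis$ — using Theorems~\ref{thm:bounded universal model and it is finite} and~\ref{thm:nisbot generated by finite algebras} where the original proof uses Theorems~\ref{thm:universal model is finite} and~\ref{thm:nis generated by finite algebras} — yields that the free $0$-generated bounded nuclear implicative semilattice is isomorphic to $\mathfrak{L}^*$. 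Chaining these isomorphisms completes the proof.

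The only real content is the identification $\mathfrak{L}=\mathfrak{L}'$, which is a direct comparison of formal symbols; the one point requiring care is bookkeeping about \emph{which} category's universal model sits on each side, since for $n=0$ the $\nis$ universal model is empty while the $\nisbot$ one is not, so the coincidence is genuinely between the $\nisbot$ model at $n=0$ and the $\nis$ model at $n=1$. I expect that stating cleanly the $\nisbot$-analog of the ``free algebra $\cong$ dual of the universal model'' remark (harvested from the proof of Theorem~\ref{thm:nis locally finite}) is the main, and quite minor, obstacle.
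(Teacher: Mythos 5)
Your proposal is correct and follows essentially the same route as the paper: the paper's argument is precisely the observation that, since the full color $\varnothing$ becomes available at $n=0$ in the bounded case, the $0$-universal model for $\nisbot$ has the same diagram as the $1$-universal model for $\nis$ (Figure~\ref{fig:1-univ model for nis}), whence the dual free algebras coincide. You merely make explicit the layer-by-layer induction behind that coincidence and the $\nisbot$-analogue of the ``free algebra $\cong$ dual of the universal model'' remark, both of which the paper leaves implicit.
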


Since we allow one more color in the first layer, the complexity of the $1$-universal model for $\nisbot$ is comparable with the complexity of the $2$-universal model for $\nis$. Therefore, the $1$-universal model for $\nisbot$ is already quite large. Despite its first layer having only $4$ elements $r_{ \emptyset,\{ 1 \}}$, $s_{\emptyset,\{ 1 \}}$, $r_{\emptyset, \emptyset}$, and $s_{\emptyset, \emptyset}$, the cardinality of next layers grows fast. Thus, it is not easy to draw it. Instead we describe fully the $1$-universal model for two subvarieties of $\nisbot$ where the nucleus is either dense or locally dense.
Such nuclei are used in the study of cofinal subframe superintuitionistic logics (see \cite{BG07}) and have applications in computer science (see \cite[Sec.~7]{FM97}).

\begin{definition}
A nucleus $j$ on a bounded implicative semilattice is \textit{dense} if $j(0)=0$, and it is \textit{locally dense} if $j(\neg j(0))=1$.
\end{definition}

Each dense nucleus is locally dense, but the converse is not true in general.
The following dual characterization of dense and locally dense nuclei on finite implicative semilattices follows from \cite{BG07}.

\begin{proposition}
Let $(X,S)$ be a finite S-poset.
\begin{enumerate}
\item $j_S$ is dense iff $\max X \subseteq S$.
\item $j_S$ is locally dense iff $\up S \cap \max X \subseteq S$.
\end{enumerate}
\end{proposition}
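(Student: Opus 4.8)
The plan is to unwind the definitions directly, computing $j_S(0)$ and $j_S(\neg j_S(0))$ in $\upsets(X)$ by means of the two formulas $j_S(U)=X\setminus\dn(S\setminus U)$ and $\neg U=U\to\varnothing=X\setminus\dn U$, and then combining this with the elementary observation (already recorded before Proposition~\ref{prop:cofinal domain dual to preserving 0}) that a subset $Y$ of a finite poset is cofinal, i.e.\ $\dn Y=X$, iff $\max X\subseteq Y$. Although the statement can be extracted from \cite{BG07}, this direct route is short and self-contained.

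For (1): since the bottom of $\upsets(X)$ is $\varnothing$, we have $j_S(\varnothing)=X\setminus\dn(S\setminus\varnothing)=X\setminus\dn S$. Hence $j_S$ is dense iff $j_S(\varnothing)=\varnothing$ iff $\dn S=X$ iff $\max X\subseteq S$. For (2), I would first note the general fact that, for any $W\in\upsets(X)$, one has $j_S(W)=X$ iff $\dn(S\setminus W)=\varnothing$ iff $S\subseteq W$. Applying this with $W=\neg j_S(\varnothing)$ shows that $j_S$ is locally dense iff $S\subseteq\neg j_S(\varnothing)$. By the computation in (1), $j_S(\varnothing)=X\setminus\dn S$, so $\neg j_S(\varnothing)=X\setminus\dn(X\setminus\dn S)$; therefore $j_S$ is locally dense iff $S\cap\dn(X\setminus\dn S)=\varnothing$, i.e.\ iff no element of $S$ lies below an element outside $\dn S$, i.e.\ iff $\up s\subseteq\dn S$ for every $s\in S$.

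It then remains to show that the condition ``$\up s\subseteq\dn S$ for all $s\in S$'' is equivalent to $\up S\cap\max X\subseteq S$. For the forward implication, if $m\in\up S\cap\max X$, choose $s\in S$ with $s\le m$; then $m\in\up s\subseteq\dn S$, so $m\le t$ for some $t\in S$, and maximality of $m$ forces $m=t\in S$. For the converse, assume $\up S\cap\max X\subseteq S$, and let $s\in S$ and $x\ge s$; since $X$ is finite there is $m\in\max X$ with $x\le m$, and then $m\ge s$ gives $m\in\up S\cap\max X\subseteq S$, whence $x\in\dn S$. The only place finiteness of $X$ is genuinely used is this last step of passing from an arbitrary point to a maximal point above it; that is also the conceptual heart of the equivalence. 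I do not expect any real obstacle here: the argument is a routine unwinding of the nucleus formula, and the only mildly delicate point is bookkeeping with the iterated complement $X\setminus\dn(X\setminus\dn S)$ and translating $S\subseteq\neg j_S(\varnothing)$ into the stated order-theoretic condition.
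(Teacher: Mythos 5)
Your proof is correct. Note that the paper itself gives no argument for this proposition at all: it simply states that the characterization ``follows from \cite{BG07}.'' Your direct computation therefore supplies a self-contained verification that the paper omits. All the steps check out: $j_S(\varnothing)=X\setminus\dn S$, so density is cofinality of $S$, which for a finite poset is exactly $\max X\subseteq S$ (as the paper itself records just before Proposition~\ref{prop:cofinal domain dual to preserving 0}); the observation that $j_S(W)=X$ iff $S\subseteq W$ correctly reduces local density to $S\subseteq\neg j_S(\varnothing)$, i.e.\ to $\up S\subseteq\dn S$; and your two-way argument that this is equivalent to $\up S\cap\max X\subseteq S$ is sound, with finiteness used precisely where you say it is. This is the natural unwinding of the definitions and is presumably the same computation one would extract from \cite{BG07}; there is nothing to object to.
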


This implies that in universal models for the dense case only elements from $S$ are allowed in the first layer. On the other hand, in universal models for the locally dense case, when applying Rules (2) and (3), we add the element $s_{\alpha, \sigma}$ only when $X_1 \cap \up \alpha \subseteq S_1$.

In the dense case, the $0$-universal model has only one element $s_{\emptyset, \emptyset}$. Indeed, we only allow elements of $S$ in the first layer. Then the construction has to stop because there is no element with nonempty color, nor an element outside of $S$, and so we cannot apply any of the three rules. Thus, we arrive at the following theorem.

\begin{theorem}
The free $0$-generated dense nuclear implicative semilattice is a $2$-element chain on which $j$ is the identity.
\end{theorem}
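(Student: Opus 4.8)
The plan is to read the statement directly off the structure of the $0$-universal model for the dense case, exactly as the preceding paragraph sets it up, and then dualize. First I would record that when $n=0$ the only available color is $\emptyset$, which simultaneously serves as the empty color and the full color. In the dense case only elements of $S$ are permitted in the first layer, so the first layer of the $0$-universal model consists of the single element $s_{\emptyset,\emptyset}$, and this element lies in $S$ (the would-be companion point $r_{\varnothing,\sigma}$, which sits outside $S$, is excluded by the density constraint).

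Next I would check that the recursion halts immediately. Rules~(1) and~(2) each demand a color $\sigma$ strictly contained in $\colorsof(\alpha)=\emptyset$, which is impossible; and Rule~(3) demands an antichain $\alpha$ with $\alpha\nsubseteq S$, whereas the only nonempty antichain available, $\{s_{\emptyset,\emptyset}\}$, is contained in $S$. Hence no new element can be added, and the $0$-universal model for the dense case is the S-poset $\mathfrak X=(X,S)$ with $X=\{s_{\emptyset,\emptyset}\}$ a one-point poset and $S=X$.

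Then I would invoke the dense analogue of Theorem~\ref{thm:universal model} (together with the identification of the free algebra with the algebra of upsets of the universal model, as in the remark following Theorem~\ref{thm:nis locally finite}) to conclude that the free $0$-generated dense nuclear implicative semilattice is isomorphic to $\mathfrak X^*=(\upsets(X),j_S)$. Since $X$ is a singleton, $\upsets(X)=\{\varnothing,X\}$ is the $2$-element chain. Finally I would compute $j_S$ from $j_S(U)=X\setminus\dn(S\setminus U)$: we get $j_S(\varnothing)=X\setminus\dn X=\varnothing$ and $j_S(X)=X\setminus\dn\varnothing=X$, so $j_S$ is the identity nucleus, which is indeed dense since $j_S(0)=j_S(\varnothing)=\varnothing=0$. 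This yields the claim.

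I do not expect a genuine obstacle; the only point requiring care is the bookkeeping in the first step, namely that in the $n=0$ dense case the full and empty colors coincide and the density restriction on the first layer therefore leaves just one point. As a sanity check I would also keep in mind a purely algebraic route that bypasses the universal model: in any bounded dense nuclear implicative semilattice the subalgebra generated by the empty set of generators is $\{0,1\}$ — it contains $0,1$, is closed under $\meet$ and $\to$, and $j$ fixes $1$ always and $0$ by density — while the $2$-element chain with identity nucleus is itself such an algebra; hence that chain is free. I would present the universal-model argument as the main line, for uniformity with the rest of the section, and mention the algebraic one only as a remark.
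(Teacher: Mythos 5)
Your proposal is correct and follows essentially the same route as the paper: build the $0$-universal model for the dense case, observe that the density constraint leaves only the single point $s_{\emptyset,\emptyset}\in S$ and that none of the three rules can fire, and dualize to obtain the $2$-element chain of upsets with $j_S$ the identity. The explicit computation of $j_S$ and the algebraic sanity check are fine additions but not needed beyond what the paper already records.
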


In the locally dense case, the $0$-universal model has 2 incomparable elements $r_{\emptyset,\emptyset}, s_{\emptyset,\emptyset}$ in the first layer. Then the construction has to stop because Rules (1) and (2) do not apply since the only color is the empty color, and we are not allowed to apply Rule (3) because of the additional constraint in this case. Thus, we arrive at the following theorem.

\begin{theorem}
The free $0$-generated locally dense nuclear implicative semilattice
is the algebra shown below.
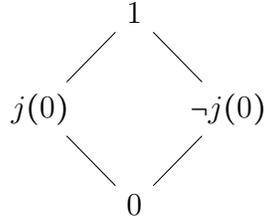
\begin{figure}[H]
\begin{tikzpicture}
\matrix[matrix of math nodes, column sep={1.25cm,between origins}, row sep = 0.65cm]
{
&|(1)|1 &\\
|(nj)|
j(0) &&|(j)|
\neg j(0) \\
&|(0)|
0 &\\
};
\draw (0) -- (j) -- (1) -- (nj) -- (0);
\end{tikzpicture}
\caption{The free $0$-generated locally dense nuclear implicative semilattice.}\label{fig:0-free alg for nisbotld}
\end{figure}
\end{theorem}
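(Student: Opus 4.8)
The plan is to identify the free $0$-generated locally dense nuclear implicative semilattice $\mathfrak F_0$ with the dual $\mathfrak L^*$ of the $0$-universal model $\mathfrak L=(X,S,\colorsof)$ for this subvariety, exactly as in the proof of Theorem~\ref{thm:nis locally finite} (adapted to $\nisbot$ and its locally dense subvariety along the lines already sketched above), and then simply to read off $\mathfrak L$ and compute $\upsets(X)$ together with the nucleus $j_S$.

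First I would record the shape of $\mathfrak L$. Since $n=0$, the only available color is $\varnothing$, so $\colorsof$ is constant and the first layer consists of the two incomparable points $a:=r_{\varnothing,\varnothing}\notin S$ and $b:=s_{\varnothing,\varnothing}\in S$. I would then check that the recursion halts after this layer: Rules~(1) and~(2) cannot fire because no color is properly contained in $\varnothing$, and in the locally dense case Rule~(3) is blocked for every antichain $\alpha\subseteq X_1$ with $\alpha\nsubseteq S_1$, since such an $\alpha$ contains $a$ and hence ${\up}\alpha\cap X_1\nsubseteq S_1$. Thus $X=\{a,b\}$ is a two-element antichain with $S=\{b\}$.

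Next I would compute $\mathfrak L^*=(\upsets(X),j_S)$. Here $\upsets(X)=\{\varnothing,\{a\},\{b\},X\}$ is the four-element Boolean algebra as an implicative semilattice, with $0=\varnothing$ and $1=X$, and from $j_S(U)=X\setminus{\dn}(S\setminus U)$ one gets $j_S(\varnothing)=\{a\}$, $j_S(\{a\})=\{a\}$, $j_S(\{b\})=X$, and $j_S(X)=X$. Since $\{b\}=\{a\}\to\varnothing$, this identifies $\{a\}$ with $j(0)$ and $\{b\}$ with $\neg j(0)$, shows that $j(0)$ is a fixpoint and that $j(\neg j(0))=1$ (so $j$ is indeed locally dense), and exhibits the resulting algebra as precisely the one in Figure~\ref{fig:0-free alg for nisbotld}.

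There is no serious obstacle here: once the general machinery (the universal-model description of free algebras, transported to $\nisbot$ and its locally dense subvariety as indicated in Sections~\ref{sec:coloring}--\ref{sec:bounded case}) is in place, the statement reduces to a finite computation. The only point requiring a little care is confirming that the locally dense constraint genuinely prevents Rule~(3) from contributing anything at the second layer, which is exactly what keeps $\mathfrak L$ — and hence $\mathfrak F_0$ — this small.
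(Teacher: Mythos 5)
Your proposal is correct and takes essentially the same route as the paper: the paper likewise observes that the $0$-universal model for the locally dense case is the two-element antichain $\{r_{\varnothing,\varnothing}, s_{\varnothing,\varnothing}\}$ with $S=\{s_{\varnothing,\varnothing}\}$, Rules (1) and (2) being blocked because the only color is $\varnothing$ and Rule (3) by the locally dense constraint $X_1\cap{\up}\alpha\subseteq S_1$. Your explicit computation of $\upsets(X)$ and $j_S$, and the identification of $\{a\}$ with $j(0)$ and $\{b\}=\{a\}\to\varnothing$ with $\neg j(0)$, just spells out the finite verification that the paper leaves implicit.
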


The $1$-universal model for the dense case has two elements in the first layer $s_{\emptyset, \{1\}}$ and $s_{\emptyset, \emptyset}$.
The only antichain to which we can apply Rules (1) and (2) is $\{ s_{\emptyset, \{1\}} \}$, yielding two elements in the second layer $r_{\{s_{\emptyset, \{1\}}\}, \emptyset}$ and $s_{\{s_{\emptyset, \{1\}}\}, \emptyset}$. Rule (3) does not apply in the construction of the second layer because all the elements of the first layer are in $S$. Since in the first two layers there is only one antichain with nonempty color $\{ s_{\emptyset, \{1\}} \}$ and it is contained in the first layer, we cannot apply Rules (1) and (2) anymore. However, we can apply Rule (3) to $4$ antichains $\{ r_{\{s_{\emptyset, \{1\}}\}, \emptyset} \}$, $\{ r_{\{s_{\emptyset, \{1\}}\}, \emptyset}, s_{\{s_{\emptyset, \{1\}}\}, \emptyset} \}$, $\{ r_{\{s_{\emptyset, \{1\}}\}, \emptyset}, s_{\emptyset,\emptyset} \}$, and $\{ r_{\{s_{\emptyset, \{1\}}\}, \emptyset}, s_{\{s_{\emptyset, \{1\}}\}, \emptyset}, s_{\emptyset,\emptyset} \}$. This gives $4$ elements in the third layer that are all in $S$. Then the construction has to stop because we cannot apply Rules (1) and (2) since all the new elements added have empty color, and Rule (3) does not apply because we cannot find any antichain containing $r_{\{s_{\emptyset, \{1\}}\}, \emptyset}$ that is not entirely contained in the first two layers. Thus, the $1$-universal model for the dense case looks as follows,
where we recall from the above that we only use one subscript describing the  color of an element.

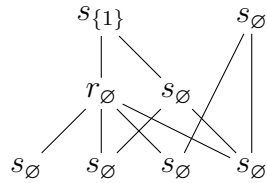
\begin{figure}[H]
\begin{tikzpicture}[inner sep = 2pt]
\node (s00) at (0,0) {$s_\varnothing$};
\node (s01) at (-2,0) {$s_{\{1\}}$};
\node (r10) at (-2,-1) {$r_\varnothing$};
\node (s10) at (-1,-1) {$s_\varnothing$};
\node (s20) at (0,-2) {$s_\varnothing$};
\node (s21) at (-1,-2) {$s_\varnothing$};
\node (s22) at (-2,-2) {$s_\varnothing$};
\node (s23) at (-3,-2) {$s_\varnothing$};
\draw (r10) -- (s01);
\draw (s10) -- (s01);
\draw (s00) -- (s20) -- (r10);
\draw (s00) -- (s21) -- (r10);
\draw (s20) -- (s10);
\draw (s10) -- (s22) -- (r10);
\draw (s23) -- (r10);
\end{tikzpicture}
\caption{The $1$-universal model for the dense case.}\label{fig:1-univ model for nisbotd}
\end{figure}

In the $1$-universal model for the locally dense case we have two additional elements $r_{\emptyset,\{ 1 \}}$ and $r_{\emptyset, \emptyset}$ in the first layer, and they contribute to two additional elements $r_{\{r_{\emptyset,\{ 1 \}} \}, \emptyset}$, $r_{\{r_{\emptyset,\{ 1 \}}, s_{\emptyset,\{ 1 \}} \}, \emptyset}$ in the second layer thanks to Rule (1). But since we are not allowed to add elements in $S$ unless the maximal elements of their upsets are all in $S$, these new elements do not contribute to adding any new element in the third layer. Thus, the construction stops.

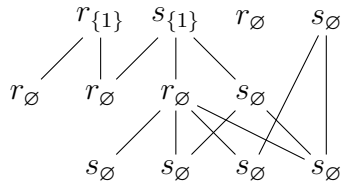
\begin{figure}[H]
\begin{tikzpicture}[inner sep = 2pt]
\node (s00) at (0,0) {$s_\varnothing$};
\node (s01) at (-2,0) {$s_{\{1\}}$};
\node (r00) at (-1,0) {$r_\varnothing$};
\node (r01) at (-3,0) {$r_{\{1\}}$};
\node (r10) at (-2,-1) {$r_\varnothing$};
\node (r11) at (-3,-1) {$r_\varnothing$};
\node (r12) at (-4,-1) {$r_\varnothing$};
\node (s10) at (-1,-1) {$s_\varnothing$};
\node (s20) at (0,-2) {$s_\varnothing$};
\node (s21) at (-1,-2) {$s_\varnothing$};
\node (s22) at (-2,-2) {$s_\varnothing$};
\node (s23) at (-3,-2) {$s_\varnothing$};
\draw (r10) -- (s01);
\draw (s10) -- (s01);
\draw (r01) -- (r11) -- (s01);
\draw (r12) -- (r01);
\draw (s00) -- (s20) -- (r10);
\draw (s00) -- (s21) -- (r10);
\draw (s20) -- (s10);
\draw (s10) -- (s22) -- (r10);
\draw (s23) -- (r10);
\end{tikzpicture}
\caption{The $1$-universal model for the locally dense case.}\label{fig:1-univ model for nisbotld}
\end{figure}

\bibliographystyle{amsplain}
\bibliography{bibliography}

\end{document}